\newtheorem{theorem}{Theorem}[section]
\newtheorem{definition}{Definition}[section]
\newtheorem{ex}{Example}[section]
\newtheorem{prop}{Proposition}[section]
\newtheorem{cor}{Corollary}[section]
\newtheorem{cl}{Claim}[section]
\newtheorem{lem}{Lemma}[section]
\newtheorem{nota}{Notation}[section]
\numberwithin{table}{section}
\begin{document}

\title{Colorings beyond Fox:\\
the other linear Alexander quandles.}

\author{Louis H. Kauffman\\
Department of Mathematics, Statistics and Computer Science, \\
University of Illinois at Chicago, 851 S. Morgan St., \\
Chicago IL 60607-7045, USA \\
\texttt{kauffman@uic.edu}\\
and\\
Pedro Lopes\\
Center for Mathematical Analysis, Geometry, and Dynamical Systems, \\
Department of Mathematics, \\
Instituto Superior T\'{e}cnico, Universidade de Lisboa\\
1049-001 Lisbon, Portugal \\
\texttt{pelopes@math.tecnico.ulisboa.pt}}

\maketitle

\begin{abstract}
This article is about applications of linear algebra to knot theory. For example, for odd prime p, there is a rule (given in the article) for coloring the arcs of a knot or link diagram from the residues mod p.  This is a knot invariant in the sense that if a diagram of the knot under study admits such a coloring, then so does any other diagram of the same knot. This is called p-colorability. It is also associated to systems of linear homogeneous equations over the residues mod p, by regarding the arcs of the diagram as variables and assigning the equation ``twice the over-arc minus the sum of the under-arcs equals zero'' to each crossing. The knot invariant is here the existence or non-existence of non-trivial solutions of these systems of equations, when working over the integers modulo p (a non-trivial solution is such that not all variables take up the same value).  Another knot invariant is the minimum number of distinct colors (values) these non-trivial solutions require, should they exist. This corresponds to finding a basis, supported by a diagram, in which these solutions take up the least number of distinct values. The actual minimum is hard to calculate, in general. For the first few primes, less than 17, it depends only on the prime, p, and not on the specific knots that admit non-trivial solutions, modulo p.  For primes larger than 13 this is an open problem. In this article, we begin the exploration of other generalizations of these colorings (which also involve systems of linear homogeneous equations mod p) and we give lower bounds for the number of colors.
\end{abstract}

Keywords: Linear Alexander quandle; dihedral quandle; Fox coloring; minimum number of colors; crossing number; determinant of knot or link.

Mathematics Subject Classification 2010: 57M25, 57M27

\section{Introduction}
This article is about the application of linear algebra to the theory of knots and links. Knots and links are embeddings of circles into 3-dimensional space \cite{lhKauffman}.  We will use the word ``knot'' to mean both ``knots'' (one component embeddings) and ``links'' (multiple component embeddings); wherever necessary, we will emphasize that we mean a one component link (or a multiple component link). Knot theorists work with knots by projecting them onto a plane, thereby obtaining the so-called knot diagrams. See Figure \ref{fig:diagram-trefoil} for an illustration of a knot diagram. Note that for our purposes, a knot diagram is finite in the sense that it has a finite number of arcs and crossings.
\begin{figure}[!ht]
	\psfrag{a}{\huge$a$}
	\psfrag{b}{\huge$b$}
	\psfrag{c = ma+(1-m)b}{\huge$a\ast b = ma+(1-m)b$}
	\centerline{\scalebox{.35}{\includegraphics{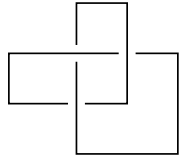}}}
	\caption{A diagram for the trefoil. At the crossings, the broken arc means that in 3-space this arc goes under, locally. }\label{fig:diagram-trefoil}
\end{figure}
\begin{figure}[!ht]
	\psfrag{I}{\huge$I$}
	\psfrag{II}{\huge$II$}
	\psfrag{III}{\huge$III$}
	\centerline{\scalebox{.35}{\includegraphics{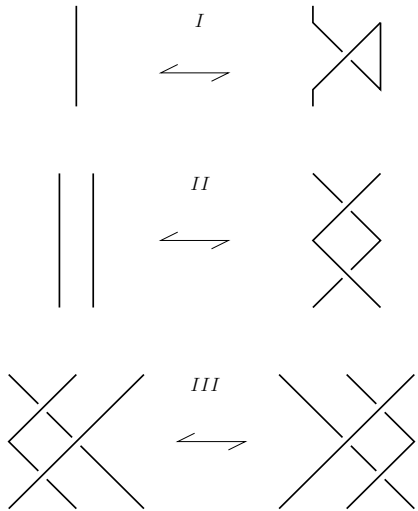}}}
	\caption{The Reidemeister moves, $I, II, III$. These are local transformations; when a diagram undergoes one of these moves, the transformation depicted occurs within a small disc while the rest of the diagram stays the same.}\label{fig:Reidemeister}
\end{figure}
Knots obtained by deformation from a given knot are in the same equivalence class and ultimately thought of as being the same knot. At the level of diagrams, they represent the same equivalence class of knots if and only if they are related by a finite number of Reidemeister moves (this is the Reidemeister Theorem, see \cite{lhKauffman}). Figure \ref{fig:Reidemeister} shows the Reidemeister moves.

One of the main issues in knot theory is to tell apart the equivalence classes referred to above. One way of doing this is by using a knot invariant i.e., a method of associating to a knot another mathematical object such as a number, a polynomial or a group. Furthermore, a knot invariant may be evaluated at  knot diagrams and it remains  the same as one performs Reidemeister moves. In this way, if the  knot invariant evaluated at two  knot diagrams returns  distinct values, the knots they represent are not deformable into one another, they are not in the same equivalence class. Loosely speaking, they are not the same knot. Tri-coloring is an example of such a knot invariant. Fox defined \cite{CFox} a \emph{tri-coloring of a knot diagram} as  a coloring of its arcs with three colors (say red, blue and green) such that (i) more than one color is used and (ii) at each crossing, the three arcs that meet there, all bear the same color or all bear distinct colors. If a diagram satisfies this property it is said \emph{tri-colorable}. Furthermore,
\begin{theorem}
Tri-colorability is invariant under Reidemeister moves i.e., it  is a knot invariant.
\end{theorem}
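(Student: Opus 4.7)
The plan is to verify tri-colorability invariance by checking each of the three Reidemeister moves locally. Since a Reidemeister move only alters the diagram inside a small disc and fixes everything outside, it suffices to show that any assignment of colors to the arcs crossing the boundary of the disc that extends to a Fox-valid coloring of the interior on one side of the move also extends on the other side. It will be useful to rephrase Fox's crossing condition algebraically: assigning colors from $\{0,1,2\}\subset\mathbb{Z}/3\mathbb{Z}$, at each crossing with over-arc $a$ and under-arcs $b,c$ the rule ``all equal or all distinct'' is equivalent to $2a-b-c\equiv 0\pmod 3$, so each check reduces to solving a small homogeneous linear system over $\mathbb{Z}/3\mathbb{Z}$. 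I also need to keep track of the non-triviality condition (i): this is automatic because arcs outside the disc are unchanged, so if more than one color is used before the move, more than one color is used after.

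First I would handle Reidemeister I. The kinked side has one self-crossing at which the two under-arcs are in fact the same arc; writing the arc of the kink as $b$ and the incoming/outgoing arc as $a$, Fox's rule forces $2a-b-b\equiv 0$, i.e., $a\equiv b\pmod 3$. Thus the interior is monochromatic and matches the single boundary color on the unkinked side. Conversely, any color on the unkinked side extends by taking all local arcs equal to that color.

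Next, Reidemeister II: the side with two crossings has four boundary arcs; label the over-strand arc $a$ (unbroken across both crossings) and the under-strand boundary arcs $b$ (one end) and $d$ (the other end), with interior under-arc $c$ between the two crossings. The two crossing equations give $c=2a-b$ and $d=2a-c=b\pmod 3$, so $b=d$ is forced and the coloring extends uniquely. On the side with no crossings, one simply connects the over-strand arc through and the under-strand arc through, which is exactly the condition $a=a$, $b=d$. Hence boundary colorings correspond bijectively.

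The main obstacle is Reidemeister III, since it has three crossings, six boundary arcs, and several interior arcs whose colors must be solved for on each side of the move. The approach is to fix the three incoming boundary colors (the top of the disc, say) and use the three crossing equations on one side of the move to express the three outgoing boundary colors and the interior arcs as explicit $\mathbb{Z}/3\mathbb{Z}$-linear combinations of the three inputs; then repeat the computation on the other side and compare. Because each equation is linear mod $3$ and there are only a handful of cases to organize by the choices of over/under at each crossing in the standard picture of move III, this is a direct algebraic verification showing that both sides induce the same linear map on boundary colors. Once this match is established for all three moves, tri-colorability is seen to pass through any finite sequence of Reidemeister moves, proving the theorem.
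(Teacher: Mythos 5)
Your proposal is correct and takes essentially the same route as the paper, whose proof is simply the remark that one analyzes what happens to the colors under each Reidemeister move (with a reference to Kauffman's book); you carry out that standard local check explicitly via the linear form $2a-b-c\equiv 0 \pmod 3$. One small slip worth noting: at a Reidemeister I kink it is the over-arc that coincides with one of the two under-arcs (not the two under-arcs with each other), but the resulting equation still forces the two local arcs to share a color, so the argument is unaffected.
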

\begin{proof}The proof is straightforward \cite{lhKauffman} by analyzing what happens to the colors when Reidemeister moves are performed.
\end{proof}
Thus, the trefoil and the unknot are not deformable into one another, as shown in Figure \ref{fig:tri-coloring-trefoil-unknot}. We remark that $p$-colorability for a given odd prime, as defined in the Abstract, is a (first) generalization of tri-colorability. It is simpler to begin with tri-colorability (which is $p$-colorability for $p=3$). However, the minimum number of colors for tri-colorabilty is $3$ (which equals the number of colors available) see Corollary \ref{cor:min3}. Thus we  introduce $p$-colorabilty in the Abstract in order to make sense of the minimum number of colors since it is one of the main invariants dealt with in this article.
\begin{figure}[!ht]
	\psfrag{a}{\huge$a$}
	\psfrag{b}{\huge$b$}
	\psfrag{c = ma+(1-m)b}{\huge$a\ast b = ma+(1-m)b$}
	\centerline{\scalebox{.35}{\includegraphics{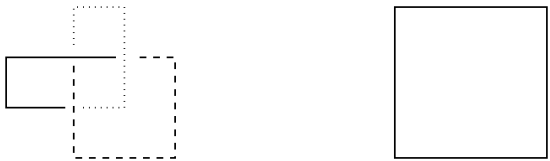}}}
	\caption{(Different sorts of lines stand for different colors.) Left: a diagram of the trefoil, tri-colored. Right: a diagram of the unknot, not tri-colored; since tri-colorability is a knot invariant, the unknot is not tri-colorable, thus not deformable into the trefoil. The trefoil and the unknot are distinct knots.}\label{fig:tri-coloring-trefoil-unknot}
\end{figure}

For a given odd prime $p$, $p$-colorability becomes a problem in linear algebra in the following way. Given a knot diagram, we regard its arcs as algebraic variables and read equations from each crossing  of the following sort: \emph{twice the over-arc minus the sum of the under-arcs equals zero} - the so-called coloring condition - see Figure \ref{fig:coloringcondition}.
\begin{figure}[!ht]
	\psfrag{a}{\huge$a$}
	\psfrag{b}{\huge$b$}
	\psfrag{c}{\huge$c= a\ast b = 2b - a$}
	\centerline{\scalebox{.35}{\includegraphics{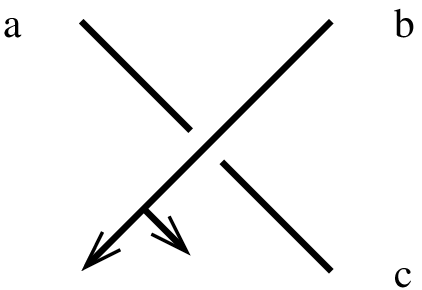}}}
	\caption{Fox coloring condition at a crossing:  $2b-a-c=0$  or \emph{twice the over-arc minus the sum of the under-arcs equals zero}.}\label{fig:coloringcondition}
\end{figure}
We then form a system of linear homogeneous equations over the integers by collecting these coloring conditions over the crossings of the diagram under study. Furthermore,  we consider this system of  equations over the integers modulo $p$, $\mathbf{Z}_p$. $p$-colorability is thus the existence or non-existence of non-trivial colorings i.e., solutions of the system of linear homogeneous equations over $\mathbf{Z}_p$, referred to above, which take up at least two distinct values - for the knot under study. A knot is said to be $p$-colorable  if one of its diagrams admits non-trivial $p$-colorings.

This (linear) algebraic reformulation of $p$-colorability retains a relevant feature of the original one. In fact, any  solution of the homogeneous system of equations mod $p$, is such that, at each crossing, the colors (values) that meet  there are all distinct or are all equal. To see this, just assume the existence of only two colors at a crossing and note that they have to be equal. See the proof of Proposition \ref{prop:min3} below.
\bigbreak

Note that the rule $a\ast b = 2b-a$ is compatible with the relations shown in Figure \ref{fig:Reidemeister+quandle}. That is $a\ast a = a$, $(a\ast b)\ast b = a$, and $(a\ast b)\ast c = (a\ast c)\ast (b\ast c)$, so that Fox colorings are fully compatible with the Reidemeister moves. Note also that the rule $a\ast b = 2b-a$ is a very particular case. An algebraic structure satisfying these axioms is called a \emph{quandle} - see Section \ref{sect:quandles}.

\bigbreak
We resume considering the rule $a\ast b = 2b-a$ where $a$ is one under-arc and $b$ is the over-arc of the crossing at issue. We remark that upon performance of a Reidemeister move on the diagram, as in Figure \ref{fig:Reidemeister+quandle}, there is a unique assignment of variables to arcs and appearance or disappearance of equations associated to the new diagram, induced by those in the original diagram \cite{pLopes}. The new system of coloring conditions is such that its matrix of coefficients relates to the original one by elementary moves on matrices. Thus, the equivalence class of these systems of linear homogeneous equations over the integers modulo elementary moves on matrices, constitutes a knot invariant. In particular, the number of solutions of these systems of equations is a knot invariant. As for the solutions, there are always the trivial ones, the ones that assign the same value (henceforth color) to each arc in the diagram. The existence of this sort of solution is related to the fact that the determinant of the coefficient matrix (henceforth coloring matrix) is zero: each row is formed by exactly one $2$, two $-1$'s and $0$'s; thus, adding all columns we obtain a column of $0$'s.

 \bigbreak
For knots, it turns out that we can take any $(n-1)\times (n-1)$ minor matrix of the $n\times n$ coloring matrix and that minor matrix will have non-zero and odd determinant (for one-component knots) \cite{CFox}. The system of equations corresponding to this minor matrix can be interpreted as the coloring system of equations where we have set equal to zero one selected arc of the diagram. Since the determinant of the minor matrix is not zero, when it further is not equal to $\pm 1$ we can produce non-trivial solutions of the system of equations mod d where d is the absolute value of the aforementioned minor determinant. These results are independent of the choice of the minor matrix. Thus, $d$ is another knot invariant, called \emph{the determinant of the knot} under study. This is the beginning of the topic of  {\it Fox colorings} for knots. Going back to the solutions of our system of equations, we can obtain non-trivial solutions  by considering the system of equations over the integers mod $f$, $\mathbf{Z}_f$, where $f$ is any factor of $d$ greater than $1$. This is how we choose the $p$ in $p$-colorability. It is any of the prime factors of the determinant of the knot under study.

Here is another perspective on the coloring matrix. Since it is its equivalence class that matters (it is the knot invariant) we might as well work with a preferred representative of the equivalence class. Let us choose the Smith Normal Form of the coloring matrix. It has  at least one $0$ along the diagonal, because the determinant of the coloring matrix is zero. The elements of the diagonal, modulo sign, constitute also a knot invariant. (These elements can be interpreted in terms of the homology of a 2-fold branched covering along the knot of $S^3$ \cite{Przytycki}.) The absolute value of their product, except for the $0$ referred to above, is called the \emph{determinant of the knot} under study (equivalent to the definition in the previous paragraph). It is a fact that the  determinant for knots (one component links) is an odd integer.  Going back to the solutions of our system of equations, we can obtain non-zero solutions to the system by working in an appropriate modular number system. We do that by choosing a prime factor of one of the non-zero elements, say $p$, of this diagonal and working  over the integers modulo $p$, $\mathbf{Z}/p\mathbf{Z}$. In particular, if $p=3$, then the knot is tricolorable. We remark that our choice of a prime factor $p$ has to do with the fact that in this way the associated ring of modular integers is a field and we can use the techniques of standard linear algebra. We can also work over $\mathbf{Z}_m$ for composite $m$, if we wish, by doing linear algebra over commutative rings \cite{McDonald}. We can now say that if the determinant of the knot under study is divisible by an odd prime $p$, then the knot is \emph{$p$-colorable} or \emph{colorable mod $p$}.

\bigbreak

\subsection{Results and organization of this article.}\label{subsect:results+org}

Fox coloring is based on the rule $a\ast b = 2b-a$. In fact, if $T$  is an algebraic variable, then $a\ast b = Ta + (1-T)b$ satisfies the quandle rules, as illustrated in Figure \ref{fig:Reidemeister+quandle}. This means that we can generalize Fox colorings by working over $\mathbf{Z}_p$ (for a given odd prime $p$) and using a non-null integer $m$, with $a\ast b =ma+(1-m)b$, mod $p$. We call such quandles \emph{Linear Alexander Quandles} \cite{Nelson}. If a knot diagram can be assigned integers to its arcs such that at each crossing the condition $a\ast b =ma+(1-m)b$, mod $p$ is satisfied and at least two arcs are assigned distinct colors (i.e., integers mod $p$), then the corresponding link is said to admit non-trivial $(p, m)$-colorings.
\begin{definition}\label{def:mincolpm}
Let $K$ be a knot admitting non-trivial $(p, m)$-colorings. Let $D$ be a diagram of $K$ and let $n_{D, p, m}$ be the minimum number of colors it takes to equip $D$ with a non-trivial $(p, m)$-coloring. We let $$mincol_{p, m} (K) = \min \{ n_{D, p, m} \, |\, D \text{ is a diagram of } K \}$$ and refer to it as {\bf the minimum number of colors for non-trivial $(p, m)$-colorings of $K$}. When the context is clear we will say {\bf the minimum number of colors for $K$}.

\end{definition}The rest of the article is devoted to these matters. In particular, the main Theorem of this article is (Section \ref{sect:palette})
\begin{theorem}\label{thm:satohmin}
Let $K$ be a knot  i.e., a $1$-component link. Let $p$ be an odd prime. Let $m$ be an integer such that $K$ admits non-trivial $(p, m)$-colorings. If $m \neq 2$ $($or $m=2$ but $\Delta^0_K(m)\neq 0$, explained below$)$ then $$2 + \lfloor \ln_M p \rfloor \leq mincol_{p, m} (K) ,$$ where $M = \max \{ |m|, |m-1|  \}$.
\end{theorem}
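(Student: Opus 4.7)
The plan is to establish the equivalent inequality that $p$ is bounded by a geometric expression in $M$ of exponent roughly $n := mincol_{p,m}(K)$, and then extract from this the stated logarithmic bound. The starting point is the diagram $D$ of $K$ realizing $n$ distinct colors in a non-trivial $(p,m)$-coloring, together with the following reformulation of the crossing rule. Writing $c = ma + (1-m)b$ in difference form gives
\[
c - b = m(a-b), \qquad c - a = (m-1)(a-b),
\]
so at every crossing the three pairwise differences of the colors meeting there are linked by multiplication by one of $1$, $m$, or $m-1$, each of absolute value at most $M = \max\{|m|,|m-1|\}$. This is the only algebraic ingredient the proof exploits, and it is the reason $M$ enters the bound.

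Next I would lift the $\mathbf{Z}_p$-valued coloring to an integer-valued labeling of the arcs, and track how this integer label evolves as one walks along a strand of the knot. After each under-crossing, the new integer label is a $\mathbf{Z}$-linear combination of the previous label and the over-arc's label, with coefficients of absolute value at most $M$; consequently, the integer spread between colors visited along a walk grows at worst geometrically with ratio $M$. By traversing enough of the diagram to visit all $n$ colors we obtain an integer interval of length at most something on the order of $M^{n-2}$ containing the entire color set $S \subset \mathbf{Z}_p$. Since any two distinct colors must differ by at least $1$ in the integer lift and by at most $p-1$, this forces an estimate of the form $p \le M^{n-1}$ (or the slightly sharper one matching the statement), which rearranges to $n \ge 2 + \lfloor \ln_M p \rfloor$.

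The hypothesis $m \ne 2$ or $\Delta^0_K(m) \ne 0$ is exactly what guarantees that this integer lift is well-defined in the first place. Concretely, $\Delta^0_K(m)$ is the Alexander polynomial (or the appropriate normalized version) evaluated at $t=m$, and its non-vanishing means that the presentation of the coloring module at $t=m$ has the minimum possible kernel; so a non-trivial $(p,m)$-coloring admits a canonical integer lift, unique up to a global shift, consistent with the crossing relations in $\mathbf{Z}$ rather than only modulo $p$. When $m=2$ and $\Delta^0_K(m)=0$, the module is degenerate and this lifting can fail, which is why the statement excludes that case.

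The main obstacle is precisely this lifting step: without Alexander-polynomial control, the mod-$p$ coloring could wrap around $\mathbf{Z}_p$ in a way that spoils the geometric growth estimate, since each occurrence of a $p$-correction in the lifted crossing relation would decouple the multiplication-by-$M$ argument from the actual integer spread. Thus I expect most of the technical work to go into using $\Delta^0_K(m) \ne 0$ (via the Alexander matrix of $D$) to produce a controlled integer lift, after which the geometric bound follows from a fairly direct induction along a walk that visits each of the $n$ colors.
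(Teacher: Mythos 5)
Your reformulation of the crossing rule in difference form, $c-b=m(a-b)$ and $c-a=(m-1)(a-b)$, correctly identifies why $M=\max\{|m|,|m-1|\}$ must appear, but the argument built on it has a fatal flaw: the integer lift you propose does not exist under the hypotheses of the theorem --- indeed, its existence is precisely the degenerate situation the theorem excludes. If a non-trivial $(p,m)$-coloring admitted an integer labelling of the arcs satisfying $c=ma+(1-m)b$ \emph{over} $\mathbf{Z}$ at every crossing, it would be a non-trivial integral coloring, and the paper shows (proof of Theorem \ref{thm:obstructions}, Corollary \ref{cor:integer-roots-Alexpoly}) that this forces $\Delta^0_K(m)=0$ and hence $m=2$. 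So under the hypothesis $\Delta^0_K(m)\neq 0$ a ``$p$-correction'' \emph{must} occur at some crossing; you have the role of the hypothesis exactly backwards. Two further steps would fail even if the lift were granted: a walk along the diagram that visits all $n$ colors traverses a number of crossings comparable to the crossing number, not to $n$, so the geometric growth argument yields an exponent equal to the number of crossings traversed rather than $n-2$; and knowing that all colors lie in a short integer interval does not by itself bound $p$ from above --- to conclude $p\leq M^{n-1}$ one must exhibit a \emph{nonzero} integer divisible by $p$ whose absolute value is at most $M^{n-1}$, and your interval estimate produces no such integer.

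The paper produces exactly such an integer. It forms the palette graph of the coloring (vertices are the colors used, directed edges record the crossing relations), takes a spanning tree --- which has $n-1$ edges, and this, not a walk, is where the exponent $n-1$ comes from --- and considers the $(n-1)\times(n-1)$ adjacency matrix $A_j$ whose rows contain at most one $-m$, one $m-1$, and one $1$. A Hadamard-style induction gives $|\det A_j|\leq M^{n-1}$ (Lemma \ref{lem:MatNm}); since both the coloring vector and the all-ones vector lie in the kernel of the full adjacency matrix mod $p$, one gets $p\mid\det A_j$; and reducing the matrix mod $|m-1|$ shows $\det A_j\equiv\pm 1$, hence $\det A_j\neq 0$ when $m\neq 2$ (Lemma \ref{lem:dets}). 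Combining these yields $p\leq|\det A_j|\leq M^{n-1}$ and the stated bound. To repair your argument you would need to replace the integer lift by this determinant computation on the spanning tree of the palette graph.
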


In the set up of Fox colorings the analogous theorem has already been proven  in \cite{NNS}. Our proof of Theorem \ref{thm:satohmin} is a generalization of their work. This is done in Section \ref{sect:palette}.

The rest of the article is organized as follows. Section \ref{sect:quandles} introduces the algebraic structure which constitutes the underpinning of our considerations - the quandle; examples are given. Homomorphisms of quandles have as particular examples Fox colorings (which take as target quandles the so-called dihedral quandles) - this is Subsection \ref{subsect:Alex-Fox-other-colorings}. Instead, in this article, we use the other linear Alexander quandles - which are introduced in Subsection \ref{subsect:metho}.

In Section \ref{sect:examples} we present examples of non-trivial $(p, m)$-colorings. We present results on automorphisms of colorings (Section \ref{sect:autocolorings}), on obstructions to colorings (Section \ref{sect:obstructions}) and minimizations by direct calculations (Section \ref{sect:minimizing}).  In Section \ref{sect:reducing} we reduce the number of colors in two examples. Finally, in Section \ref{sect:futurework} we collect a few questions for future work.

\subsection{Acknowledgements.}\label{subsec:ack}

L. K. is pleased to thank the Simons Foundation (grant number 426075) for partial support for this research under his Collaboration Grant for Mathematicians (2016 - 2021).
P.L. acknowledges partial support from FCT (Funda\c c\~ao para a Ci\^encia e a Tecnologia), Portugal, through project FCT EXCL/MAT-GEO/0222/2012, ``Geometry and Mathematical Physics''.

\section{Quandles}\label{sect:quandles}

The quandles \cite{DJoyce, SMatveev} were conceived in order to make an algebraic version of the Reidemeister moves. We give the definition of quandles below in Definition \ref{def:quandle} and illustrate the connection between Reidemeister moves and quandle axioms in Figure \ref{fig:Reidemeister+quandle}.
\begin{figure}[!ht]
	\psfrag{I}{\huge$I$}
	\psfrag{II}{\huge$II$}
	\psfrag{III}{\huge$III$}
	\psfrag{x}{\huge$x$}
	\psfrag{a}{\huge$a$}
	\psfrag{b}{\huge$b$}
	\psfrag{c}{\huge$c$}
	\psfrag{aast a=a}{\huge$a\ast a = a$}
	\psfrag{aast b}{\huge$a\ast b$}
	\psfrag{bast c}{\huge$b\ast c$}
	\psfrag{xastb = aast b}{\huge$x = (a\ast b)\bar{\ast} b \Longleftrightarrow x = a$}
	\psfrag{a astb ast c}{\huge$(a\ast b)\ast c$}
	\psfrag{aast cast bast c}{\huge$(a\ast c)\ast (b\ast c)$}
	\psfrag{abc}{\huge$(a\ast b)\ast c = (a\ast c)\ast (b\ast c)$}
	\centerline{\scalebox{.35}{\includegraphics{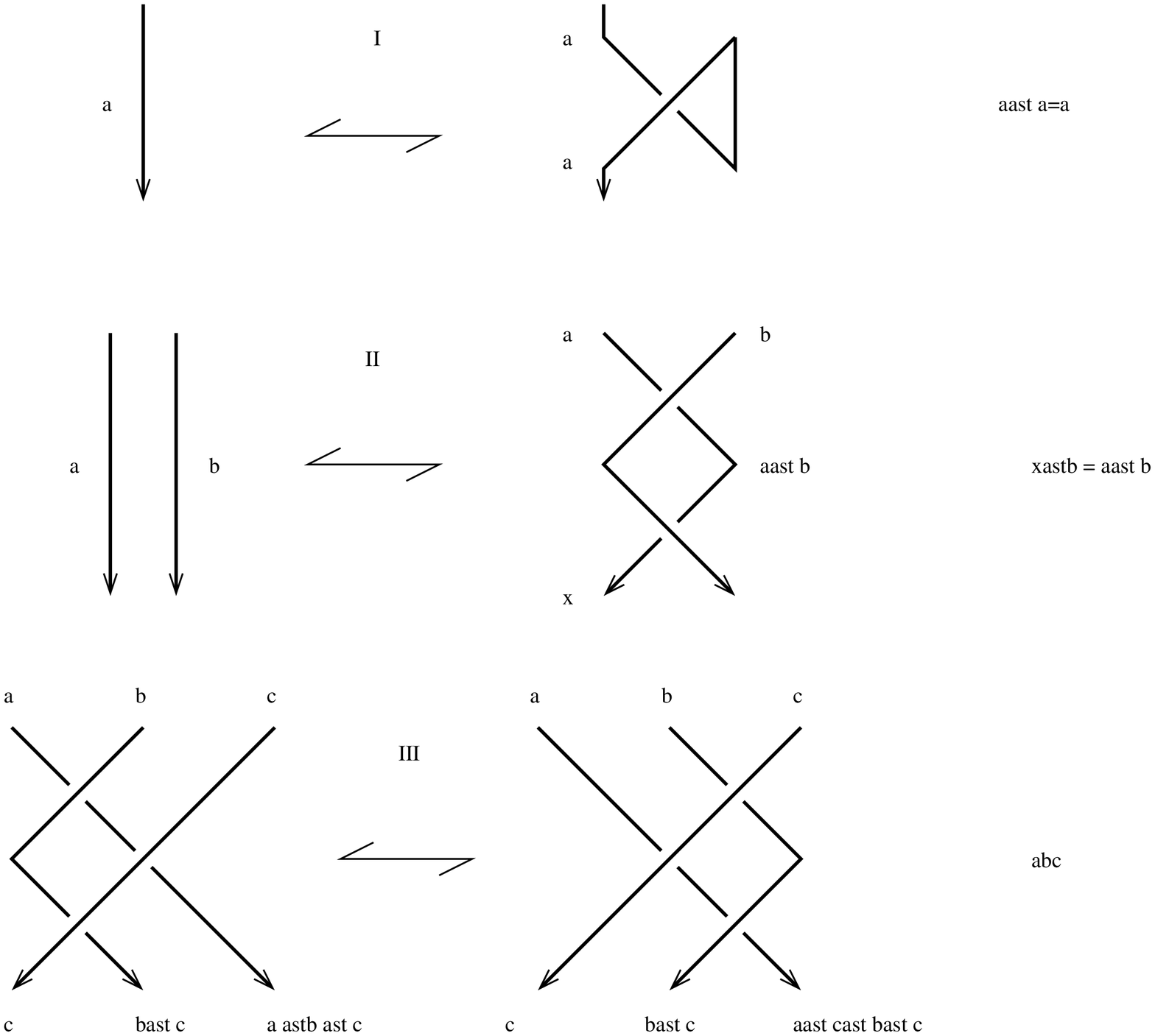}}}
	\caption{The Colored Reidemeister moves, $I, II, III$ where the arcs are generators and relations are read at crossings as indicated; equating left-hand  with right-hand sides for each move gives rise to the quandle axioms - see Definition \ref{def:quandle} below.}\label{fig:Reidemeister+quandle}
\end{figure}
\begin{definition}[Quandle]\label{def:quandle} A \emph{quandle} is a set, $X$, equipped with a binary operation, $\ast$, such that
\begin{enumerate}
\item For any $a\in X$, $a\ast a = a$;

\item For any $a, b\in X$, there is a unique $x\in X$ such that $x\ast b = a$, i.e., there is a second operation, denoted $\bar{\ast}$, such that $(a\ast b)\bar{\ast} b = a$.

\item For any $a, b, c\in X$, $(a\ast b)\ast c = (a\ast c)\ast (b\ast c)$
\end{enumerate}
The ordered pair $(X, \ast)$ denotes a quandle.
\end{definition}

\begin{ex}

\begin{itemize}
\item  The fundamental quandle of a knot \cite{DJoyce, SMatveev} is defined as follows. Given any $($oriented$)$ diagram of $K$, a presentation of the fundamental quandle of $K$ is obtained as follows.  Arcs of the diagram stand for generators and relations are read at each crossing as illustrated in Figure \ref{fig:fig8-KnotQuandle}. The fundamental quandle of the knot is also the \emph{knot quandle}, in this article.
\begin{figure}[!ht]
	\psfrag{a-ast b = c}{\huge$a\ast b = c$}
	\psfrag{b-ast a = d}{\huge$b\ast a = d$}
	\psfrag{b-ast d = c}{\huge$b\ast d = c$}
	\psfrag{a-ast c = d}{\huge$a\ast c = d$}
	\psfrag{a}{\huge$a$}
	\psfrag{b}{\huge$b$}
	\psfrag{c}{\huge$c$}
	\psfrag{d}{\huge$d$}
	\psfrag{M}{\huge$$}
	\psfrag{M'}{\huge$$}
	\centerline{\scalebox{.35}{\includegraphics{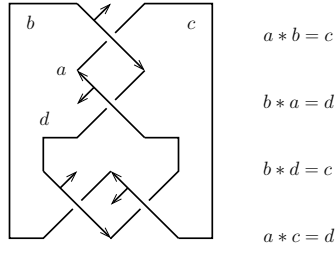}}}
	\caption{Simplifying presentations of the Knot Quandle of the Figure $8$ knot: $(\, a, b, c, d \, |\, a\ast b = c, b\ast a = d, b\ast d = c, a\ast c = d\, )\cong (\, a, b, c \, |\, a\ast b = c, b\ast (b\ast a) = c, a\ast c = b\ast a\, ) \cong (\, a, b \, |\,  b\ast (b\ast a) = a\ast b, a\ast (a\ast b) = b\ast a\, ).$}\label{fig:fig8-KnotQuandle}
\end{figure}
\item Dihedral quandle of order $n$. The underlying set is $\mathbf{Z}_n$ and the operation is, for any $a, b\in \mathbf{Z}_n$, $a\ast b = 2b-a$, mod $n$. Note that in this case, $a\bar{\ast} b = a\ast b$.
\item Linear Alexander quandle, $($LAQ$)$. Given positive integers $m < n$ such that $(n, m)=1$, we let the underlying set be $\mathbf{Z}_n$, and we let the operation be, for  any $a, b\in \mathbf{Z}_n$, $a\ast b = ma+(1-m)b$, mod $n$. We denote such quandles $LAQ(n, m)$.
\item The Alexander quandle. The underlying set is the set of the Laurent polynomials, $\mathbf{Z}[T^{\pm 1}]$. The quandle operation is, for any $a, b\in \mathbf{Z}[T^{\pm 1}]$, $a\ast b = Ta+(1-T)b$.
\end{itemize}
\end{ex}

\begin{theorem}[\cite{DJoyce, SMatveev}]
The fundamental quandle of the knot is a knot invariant $($up to isomorphism$)$.
\end{theorem}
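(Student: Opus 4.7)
The plan is to show, for each Reidemeister move $D \to D'$, that the corresponding presentations produce isomorphic quandles; combined with Reidemeister's Theorem (already invoked in the introduction), this yields the desired invariance. Write $Q(D)$ for the quandle with generators the arcs of $D$ and relations $a \ast b = c$ read at each crossing as in Figure \ref{fig:fig8-KnotQuandle}. The strategy is to match each Reidemeister move with the quandle axiom designed to accommodate it, exactly as hinted in Figure \ref{fig:Reidemeister+quandle}; concretely, for each move I would exhibit a Tietze-type transformation of presentations (adding or removing a generator together with a defining relation, or adding/removing a consequence of existing relations) justified by one of the three axioms of Definition \ref{def:quandle}.

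For move $I$, the local change replaces a single arc by two arcs $a, a'$ joined at the new self-crossing by the relation $a' = a \ast a$; axiom (1) forces $a' = a$, so the new generator and relation can be collapsed, leaving the original presentation. For move $II$, the short arcs $x$ and $y$ introduced when a pair of parallel strands is replaced by a pair that crosses twice satisfy relations of the shape $x = a \ast b$ and $y = (a \ast b) \bar{\ast} b$; by axiom (2), $y = a$, and after collapsing both new generators the two presentations agree. For move $III$, each of the two local diagrams produces the outgoing bottom arc as $(a \ast b) \ast c$ on one side and as $(a \ast c) \ast (b \ast c)$ on the other (cf. Figure \ref{fig:Reidemeister+quandle}); axiom (3) identifies these two expressions, and a parallel check on the remaining arcs yields matching presentations.

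Since the analysis is entirely local, applying it crossing-by-crossing along a finite Reidemeister sequence produces an isomorphism $Q(D) \cong Q(D')$ for any two diagrams of the same knot, which is precisely the statement that the fundamental quandle is a knot invariant up to isomorphism.

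The main effort, rather than any genuine obstacle, is bookkeeping: one must fix conventions for orientations of strands and the roles of over-arc and under-arc at each crossing, and then verify that each Reidemeister variant (there are several, once signs and orientations are accounted for) reduces to the local pictures above. Move $III$ in particular requires checking that \emph{all} arc identifications introduced by the move, not just the distinguished triple $(a \ast b) \ast c = (a \ast c) \ast (b \ast c)$, are consistently recovered from axiom (3) together with the unchanged nearby relations. One must also confirm that the Tietze-style manipulations used are legitimate in the category of quandles — that each added or removed generator is defined by an equation of the form ``new generator $=$ word in existing generators'' so that the universal property underlying the presentation is preserved.
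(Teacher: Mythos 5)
Your outline is correct, but note that the paper itself does not supply an argument here: its ``proof'' consists of deferring to Joyce and Matveev. What you have written is the standard combinatorial proof that those references (and the paper's own Figure \ref{fig:Reidemeister+quandle}) are built around: each oriented Reidemeister move induces a Tietze-type transformation of the arc-and-crossing presentation, and the three quandle axioms of Definition \ref{def:quandle} are precisely what is needed to see that the transformed presentation defines an isomorphic quandle; invariance then follows from Reidemeister's Theorem. Your matching of move $I$ with idempotency, move $II$ with right-invertibility, and move $III$ with right self-distributivity is the right correspondence, and you correctly flag the two places where the real work lies: (i) the moves come in several oriented/signed variants, some of which require the inverse operation $\bar{\ast}$ rather than $\ast$ (e.g.\ a kink of the opposite sign gives a relation of the form $a'\ast a' = a$, which needs axioms (1) and (2) together, not axiom (1) alone), and (ii) for move $III$ one must verify that \emph{all} of the arcs leaving the disk, not only the bottom one, receive consistent labels. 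Neither point is an obstacle, only bookkeeping, so the proposal is a sound expansion of what the paper leaves to the references. One small caveat: this argument establishes invariance of the quandle \emph{presented by a diagram}; identifying that object with the topologically defined fundamental quandle of the knot complement is a separate matter treated in the cited sources, but for the way the paper actually uses the knot quandle (counting homomorphisms to finite target quandles), the diagrammatic invariance you prove is exactly what is needed.
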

\begin{proof}
Proofs of this fact are found in \cite{DJoyce} and \cite{SMatveev}.
\end{proof}
\begin{definition}[Quandle homomorphism]
Given quandles $(X, \ast)$ and $(X', \ast')$, a quandle homomorphism is a mapping $h : X \longrightarrow X'$ such that, for any $a, b\in X$, $$ h(a\ast b) = h(a)\ast' h(b).$$
\end{definition}
\begin{cor}\label{cor:cardhomo}
Let $K$ be a knot and $X$ a quandle. The number $($which may be infinite$)$ of homomorphisms from the Knot Quandle of $K$ to $X$ is a knot invariant. In this context, $X$ is called the target quandle.
\end{cor}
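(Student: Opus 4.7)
The plan is to deduce the corollary directly from the preceding theorem, which asserts that the fundamental quandle of a knot is a knot invariant up to isomorphism. The idea is that any invariant feature of a quandle (up to isomorphism) automatically gives a knot invariant when applied to the fundamental quandle, and the cardinality of the set of homomorphisms into a fixed target is such a feature.

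Concretely, I would proceed as follows. Let $D_1$ and $D_2$ be two diagrams representing the same knot $K$, and let $Q(D_1)$, $Q(D_2)$ be the quandles obtained from the presentations read off at the crossings (arcs as generators, crossing relations of the form $a \ast b = c$). By the previous theorem, there is a quandle isomorphism $\varphi \colon Q(D_1) \longrightarrow Q(D_2)$. Given the target quandle $X$, I would define a map
\[
\varphi^{\ast} \colon \mathrm{Hom}\bigl(Q(D_2),X\bigr) \longrightarrow \mathrm{Hom}\bigl(Q(D_1),X\bigr), \qquad h \longmapsto h \circ \varphi.
\]
Because $\varphi$ is a quandle homomorphism, $h\circ\varphi$ is again a quandle homomorphism, so the map is well-defined. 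Because $\varphi$ is an isomorphism, $\varphi^{\ast}$ has a two-sided inverse given by precomposition with $\varphi^{-1}$, hence it is a bijection. Therefore
\[
\bigl|\mathrm{Hom}\bigl(Q(D_1),X\bigr)\bigr| \;=\; \bigl|\mathrm{Hom}\bigl(Q(D_2),X\bigr)\bigr|,
\]
with the convention that both sides are the symbol $\infty$ if one (hence both) is infinite.

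Since $D_1$ and $D_2$ were arbitrary diagrams of $K$, the cardinality does not depend on the chosen diagram and is therefore a knot invariant. There is essentially no obstacle here: the only thing to observe is that precomposition with a quandle isomorphism sends quandle homomorphisms to quandle homomorphisms and is inverted by precomposition with the inverse isomorphism. The heavy lifting is done entirely by the invariance of the fundamental quandle, which was already established.
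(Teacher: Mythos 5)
Your proposal is correct and follows essentially the same route as the paper, which disposes of the corollary in one contrapositive sentence (if the homomorphism counts differed, the knot quandles could not be isomorphic); you have simply written out the standard details, namely that precomposition with the isomorphism $\varphi$ gives a bijection $\mathrm{Hom}(Q(D_2),X)\to\mathrm{Hom}(Q(D_1),X)$ inverted by precomposition with $\varphi^{-1}$. No gaps.
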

\begin{proof}
If it were not, there would not be an isomorphism between the knot quandles.
\end{proof}

In the Example \ref{ex:quandlehomo} right below, we keep the notation and terminology of Corollary \ref{cor:cardhomo}. An otherwise arbitrary knot has been fixed; we vary the family of the target quandles.
\begin{ex}\label{ex:quandlehomo}
\begin{enumerate}
\item Let $X$ equal a dihedral quandle of order $m$. Then the homomorphisms are the $m$-colorings of the knot under study.
\item Let $X$ be a linear Alexander Quandle, other than a dihedral quandle. This article is devoted to the study of the homomorphisms with these quandles as targets.
\item Let $X$ be the Alexander quandle. We elaborate on the associated homomorphisms in Subsection \ref{subsect:Alex-Fox-other-colorings}, right below.
\end{enumerate}
\end{ex}

\bigbreak

\subsection{Alexander Colorings}\label{subsect:Alex-Fox-other-colorings}

We now elaborate on the quandle homomorphisms $3$. of  Example \ref{ex:quandlehomo}. These are  the quandle homomorphisms from the knot quandle of a given knot, to the Alexander quandle. This is the quandle whose underlying set is the ring of Laurent polynomials (on the variable $T$), denoted $\Lambda$, i.e., $$\Lambda = \mathbf{Z}[T, T^{-1}], $$ equipped with the operation $$a\ast b = Ta+ (1-T)b \qquad \text{ for any } a, b\in \Lambda .$$

The coloring condition here is depicted in Figure \ref{fig:alexcoloringcondition}.  The procedure and mathematical objects that come up along the way parallel those of the preceding cases. In short, there will be a matrix (here called the Alexander matrix) whose first minor determinant (here called the (1st) Alexander polynomial) controls the existence of non-trivial solutions (colorings); should these solutions exist they belong in the appropriate quotient of $\Lambda$ (modulo the ideal generated by the Alexander polynomial, or one of its factors). More specifically, given a knot, the homomorphisms from its knot quandle to the Alexander quandle are solutions of a system of equations (the coloring conditions) read off the crossings of a given diagram of the knot. This system of equations gives rise to a matrix of coefficients (which is the Alexander matrix). The fact that along each row the non-null entries are $T$, $1-T$, and $-1$, implies this matrix has $0$ determinant; this corresponds to the existence of the trivial (i.e., monochromatic) solutions. The existence of non-trivial solutions corresponds to working on the quotient of $\Lambda$ by the ideal generated by the first minor determinant of the Alexander matrix (i.e., the Alexander polynomial). The Alexander polynomial is a Laurent polynomial on the variable $T$ and is determined up to $\pm T^n$, for any integer $n$; it is independent, up to $\pm T^n$, of the first minor you choose from the Alexander matrix. Further information on the Alexander matrix and the Alexander polynomial(s) can be found in \cite{CFox}, although from a different perspective.
\begin{figure}[!ht]
	\psfrag{a}{\huge$a$}
	\psfrag{b}{\huge$b$}
	\psfrag{c = ma+(1-m)b}{\huge$a\ast b = Ta+(1-T)b$}
	\centerline{\scalebox{.35}{\includegraphics{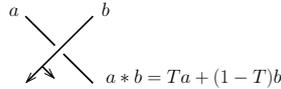}}}
	\caption{The coloring condition at a crossing of the diagram with $\Lambda$ as target quandle.}\label{fig:alexcoloringcondition}
\end{figure}

\begin{figure}[!ht]
	\psfrag{a-ast b = c}{\huge$a\ast b = c \quad : \quad Ta+(1-T)b=c$}
	\psfrag{b-ast a = d}{\huge$b\ast a = d \quad : \quad Tb+(1-T)a=d$}
	\psfrag{b-ast d = c}{\huge$b\ast d = c \quad : \quad Tb+(1-T)d=c$}
	\psfrag{a-ast c = d}{\huge$a\ast c = d \quad : \quad Ta+(1-T)c=d$}
	\psfrag{a}{\huge$a$}
	\psfrag{b}{\huge$b$}
	\psfrag{c}{\huge$c$}
	\psfrag{d}{\huge$d$}
	\psfrag{M}{\huge$$}
	\psfrag{M'}{\huge$$}
	\centerline{\scalebox{.35}{\includegraphics{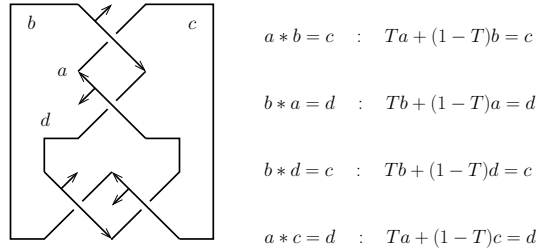}}}
	\caption{The equations for the Alexander colorings of the figure eight knot.}\label{fig:fig8-Alex}
\end{figure}
We now illustrate the procedure with the help of the ``figure eight'' knot, see Figure \ref{fig:fig8-Alex}. Due to the coloring of the diagram by the Alexander quandle, the matrices we associate with the system of equations in Figure \ref{fig:fig8-Alex} are $M$, the Alexander matrix, and $M_1$, one of its first minor matrices:
\begin{align}\label{eq:1}
M=
\begin{pmatrix}
{\bf a} & {\bf b} & {\bf c} & {\bf d}\\\hline
T & 1-T & -1 & 0\\
1-T & T & 0  & -1\\
0 & T & -1 & 1-T\\
T & 0 & 1-T & -1
 \end{pmatrix}
\qquad \qquad
M_1 =\begin{pmatrix}T & 0 & -1 \\
 T & -1 & 1-T\\
 0 & 1-T & -1
 \end{pmatrix}
 \end{align}
and the determinant of $M_1$ is the  Alexander polynomial of the Figure eight knot. This is
\begin{align}\label{eq:2}
\det (M_1) = T\begin{vmatrix}
 -1 & 1-T\\
  1-T & -1
 \end{vmatrix} - \begin{vmatrix}
 T & -1 \\
 0 & 1-T
 \end{vmatrix} = - T(T^2-3T+1) \overset{\cdot}{=} T^2-3T+1
\end{align}
(where $\overset{\cdot}{=}$ means ``equality modulo units of $\Lambda$'', $\pm T^n$'s) so the reduced Alexander polynomial (defined below) of the figure eight knot, $K$,  is $$\Delta^0_K (T) = T^2-3T+1 .$$

The $4\times 4$ matrix in Equation (\ref{eq:1}) is the matrix of the coefficients of the associated linear homogeneous system of equations obtained from collecting the coloring conditions at each crossing of the diagram. The determinant of this matrix is zero since along each row we find exactly one $T$, one $1-T$ and one $-1$ along with $0$'s. This means that we have several solutions. We call them trivial or monochromatic because each of these solutions assign the same Laurent polynomial to the distinct arcs of the diagram. In order to obtain polychromatic (or non-trivial) solutions we have to work over a quotient of $\Lambda$ where the first minor determinant of the coloring matrix vanishes \cite{McDonald}. This first minor determinant (Equation (\ref{eq:2})) is the Alexander polynomial (modulo multiplication by units of $\Lambda$) of the knot at issue  \cite{CFox}.

\begin{definition}
When an Alexander polynomial is not identically zero, the reduced Alexander polynomial is the one which when evaluated at $0$ is defined and positive. For instance, $\Delta^0_K (T) = T^2-3T+1, $
is the reduced Alexander polynomial of the figure eight knot.
\end{definition}

CAVEAT: In this article we only consider knots or links whose Alexander polynomial is not identically $0$.

We now give a formal statement and proof of the fact above.
\begin{prop}
In order to obtain polychromatic i.e., non-trivial, solutions, the system of coloring equations has to be considered over a quotient of $\Lambda$ where the first minor determinant of the coloring matrix $($i.e., Alexander matrix$)$ vanishes.
\end{prop}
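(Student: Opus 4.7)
The strategy is to exploit the classical adjugate identity to show that any polychromatic solution is annihilated by the first minor determinant. First I would fix notation exactly as in the excerpt: given a diagram of $K$ with $n$ arcs and $n$ crossings, let $M \in \Lambda^{n\times n}$ be the Alexander matrix and let $\mathbf{x} = (x_1,\dots,x_n)^{t}$ be a vector of proposed arc labels. Because every row of $M$ carries exactly one $T$, one $1-T$ and one $-1$, summing to $0$, the all-ones vector lies in $\ker M$ and the monochromatic submodule $\{(\lambda,\dots,\lambda)^{t} : \lambda \in \Lambda\}$ is contained in the kernel; non-triviality of a solution $\overline{\mathbf{x}}$ over a quotient $\Lambda/I$ means that its image is not congruent to a vector in this monochromatic submodule modulo $I$.

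Next I would normalize: subtracting the monochromatic solution $(\overline{x}_n,\dots,\overline{x}_n)^{t}$ from $\overline{\mathbf{x}}$ produces a solution whose last entry vanishes while at least one other entry $\overline{x}_j$ remains non-zero in $\Lambda/I$. Writing $\mathbf{x}' = (x_1,\dots,x_{n-1})^{t}$, this vector satisfies the $n \times (n-1)$ system obtained from $M$ by deleting its last column. Since $M$ has rank at most $n-1$ and the first elementary ideal is generated by a non-zero polynomial, the deleted column is in the $\Lambda$-span of the others, so one of the $n$ rows is redundant; striking it out yields a square $(n-1) \times (n-1)$ first minor $M_1$ with $M_1 \mathbf{x}' \equiv \mathbf{0} \pmod{I}$ and, by definition, $\det M_1 \overset{\cdot}{=} \Delta^0_K(T)$.

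The core computation is then a one-line adjugate argument: multiplying $M_1 \mathbf{x}' \equiv 0$ on the left by $\operatorname{adj}(M_1)$ and invoking $\operatorname{adj}(M_1)\, M_1 = \det(M_1)\, I_{n-1}$ yields
\begin{equation*}
\Delta^0_K(T) \cdot \overline{x}_i \equiv 0 \pmod{I}, \qquad i = 1,\dots,n-1.
\end{equation*}
Because some $\overline{x}_j$ is non-zero in $\Lambda/I$, this identity prohibits $\Delta^0_K(T)$ from being a non-zero-divisor in the quotient. In particular, any quotient admitting a polychromatic solution must factor through one in which $\Delta^0_K(T)$ itself vanishes, i.e.\ through $\Lambda/(\Delta^0_K(T))$, which is the assertion of the proposition.

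The main obstacle I foresee is the need to argue that this conclusion is independent of the choices made, namely which arc is normalized to zero and which row of $M$ is discarded to form $M_1$; the classical fact that all first minors of the Alexander matrix agree up to multiplication by units $\pm T^{k}$ of $\Lambda$ (see \cite{CFox}) makes the statement ``$\Delta^0_K(T)$ vanishes in the quotient'' intrinsic. A milder subtlety is that the adjugate argument literally yields only that $\Delta^0_K(T)$ is a zero-divisor in $\Lambda/I$; one can then pass to the smaller quotient by the annihilator of $\overline{x}_j$ to recover the cleaner statement $\Delta^0_K(T) \in I$ in the spirit of the proposition.
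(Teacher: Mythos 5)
Your proposal is correct and follows essentially the same route as the paper: translate a polychromatic solution by a monochromatic one to kill a chosen coordinate, pass to the first minor matrix $M_1$, and apply the adjugate identity $\operatorname{adj}(M_1)\,M_1 = (\det M_1)\,I$ to conclude that $\det M_1 \overset{\cdot}{=} \Delta^0_K(T)$ must vanish in the quotient. Your closing remark that the adjugate step literally shows only that $\Delta^0_K(T)$ is a zero-divisor in $\Lambda/I$ is a finer point than the paper makes explicit, and is a welcome clarification rather than a deviation.
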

\begin{proof}
The proof has two parts. In the first part we prove that the sum of a polychromatic solution with a monochromatic solution is again a (polychromatic) solution, see Figure \ref{fig:addtrivialcoloring}. \begin{figure}[!ht]
	\psfrag{a}{\huge$a$}
	\psfrag{b}{\huge$b$}
	\psfrag{c}{\huge$c = Ta+(1-T)b$}
	\psfrag{a1}{\huge$a + const$}
	\psfrag{b1}{\huge$b + const$}
	\psfrag{c1}{\huge$Ta+(1-T)b + const = c + const$}
	\centerline{\scalebox{.35}{\includegraphics{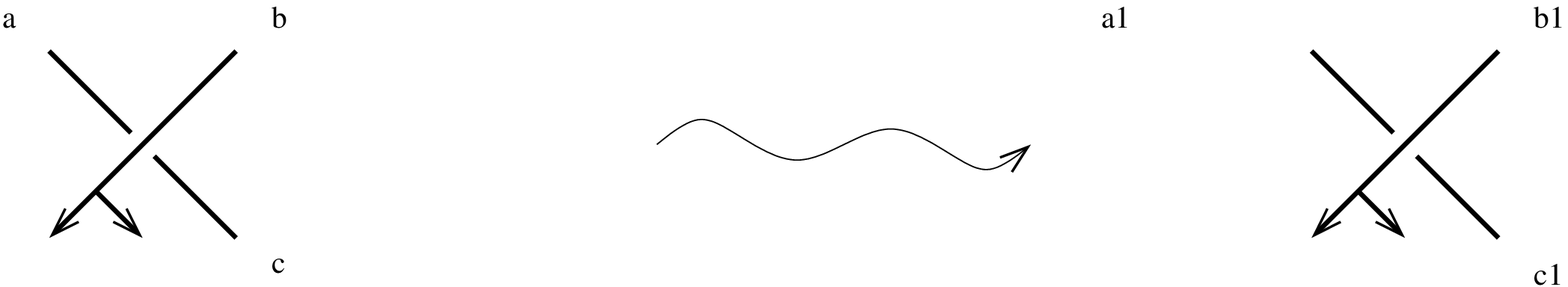}}}
	\caption{The addition of a monochromatic solution (denoted ``$const$'', in the Figure) to any other solution is again a solution, for Alexander quandles: $T(a + const) + (1-T)(b + const) = Ta+(1-T)b + const = c + const$. The Figure checks that if at a crossing the triple $(a, b, c)$ satisfies the coloring condition ($a\ast b= c$), then so does the triple $(a + const, b + const, c + const)$.}\label{fig:addtrivialcoloring}
\end{figure}

Now for the second part. Let $M$ denote the coloring matrix and assume $\vec{V}$ is a polychromatic solution, so that the equation $$M \vec{V} = \vec{0}$$ holds (where juxtaposition on the left-hand side of the equation denotes matrix multiplication and $\vec{0}$ is a column vector consisting of $0$'s). We now add an appropriate monochromatic solution to $\vec{V}$ so that the $i$-th component of the new solution is $0$; we denote it $\vec{V}_i$. We remark that $\vec{V}_i$ is again a polychromatic solution. We now let $\vec{v}_i$ denote the column vector obtained from $\vec{V}_i$ by removing the $i$-th component; $\vec{v_i}$ is also polychromatic. Furthermore, Let $M_i$ be the matrix obtained from $M$ by removing the $i$-th column and the $i$-th row (although we could have removed any row). Then \cite{McDonald} $$\vec{0} = M_i \vec{v}_i \quad \Longleftrightarrow \quad \vec{0} = Adj(M_i) M_i \vec{v}_i = (\det M_i)\,  \vec{v}_i , $$   where $Adj(M_i)$ denotes the adjoint matrix to $M_i$. Since $\vec{v}_i$ is a polychromatic solution then $\det M_i$ has to be $0$.  We bring that about by working mod $\det M_i$ i.e., by working on the quotient of $\Lambda$ by the ideal generated by $\det M_i$ (or any factor of it, should it exist).

 The proof is complete.
 \end{proof}

  We remark that $\det M_i$ is the Alexander polynomial of the knot at issue \cite{CFox}; it is independent of the pair row-column removed above, modulo multiplication by units. In this way, the Alexander polynomial is the means to obtain non-trivial solutions for the knot under study.

Furthermore, the Alexander polynomial is a knot invariant \cite{CFox} so the considerations above do not depend on the knot diagram being used.

Figure \ref{fig:AlexTrefoil} renders another perspective on how to retrieve the Alexander polynomial, this time for the Trefoil knot. Note that somehow we are doing a Gaussian elimination in order to calculate the relevant determinant.

\begin{figure}[!ht]
	\psfrag{a}{\huge$a$}
	\psfrag{b}{\huge$b$}
	\psfrag{c}{\huge$a\ast b = Ta+(1-T)b$}
	\psfrag{d}{\huge$b\ast (a\ast b) =  (T^2-T+1)(b-a) + a (= a)$}
	\psfrag{e}{\huge$(a\ast b)\ast (b\ast (a\ast b)) = T(T^2-T+1)(a-b) + b (= b) $}
	\centerline{\scalebox{.4}{\includegraphics{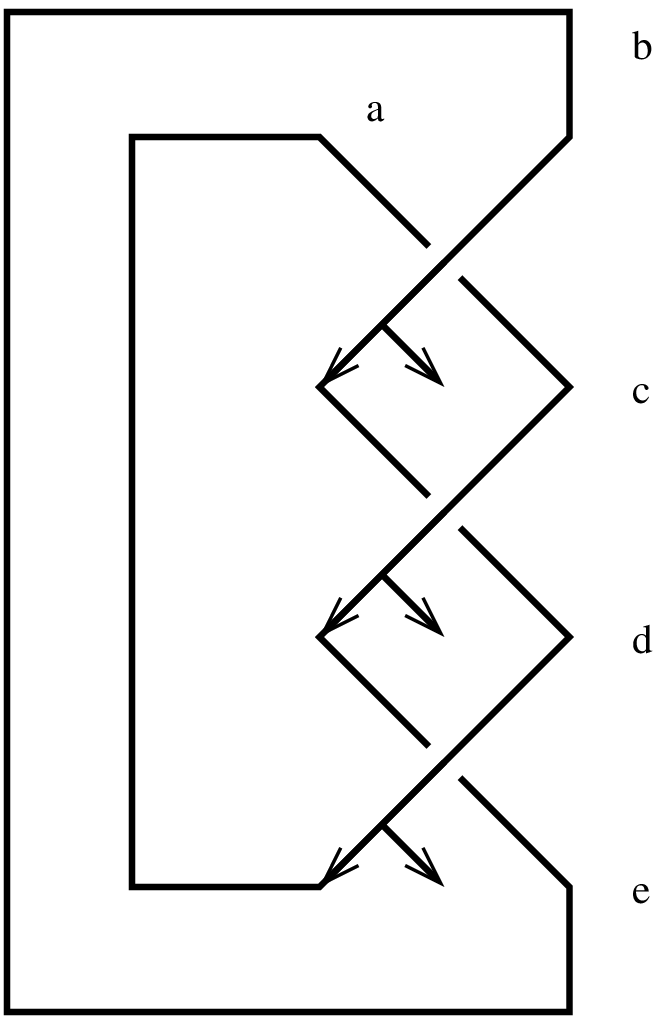}}}
	\caption{Retrieving the Alexander polynomial for the trefoil. The equalities inside parenthesis indicate that the resulting equation follows from the fact the arcs at stake had already been assigned colors. It follows that the Alexander polynomial is $T^2-T+1$ modulo units.}\label{fig:AlexTrefoil}
\end{figure}

\begin{figure}[!ht]
	\psfrag{a-ast b = c}{\huge$a\ast b = c \quad : \quad (-1)a+(1-(-1))b=c$}
	\psfrag{b-ast a = d}{\huge$b\ast a = d \quad : \quad (-1)b+(1-(-1))a=d$}
	\psfrag{b-ast d = c}{\huge$b\ast d = c \quad : \quad (-1)b+(1-(-1))d=c$}
	\psfrag{a-ast c = d}{\huge$a\ast c = d \quad : \quad (-1)a+(1-(-1))c=d$}
	\psfrag{a}{\huge$a$}
	\psfrag{b}{\huge$b$}
	\psfrag{c}{\huge$c$}
	\psfrag{d}{\huge$d$}
	\psfrag{M}{\huge$$}
	\psfrag{M'}{\huge$$}
	\centerline{\scalebox{.35}{\includegraphics{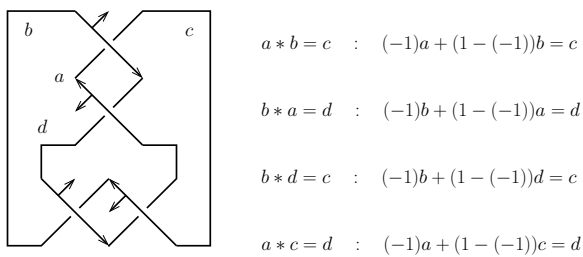}}}
	\caption{The equations for the Fox colorings of the Figure $8$ knot; coloring condition: $a\ast b = 2b-a$.}\label{fig:fig8-Fox}
\end{figure}
Let us now look at some particular cases. Suppose we set $T = -1$; then the underlying set of the target quandle is the integers, $\mathbf{Z}$, and the quandle operation is $a\ast b = 2b-a$. The Alexander polynomial evaluated at $T = -1$  yields the so-called knot determinant and its absolute value yields the generator of the ideal associated with the nontrivial solutions i.e., the non-trivial colorings. This is the context of the so-called Fox colorings - see Figure \ref{fig:fig8-Fox} for an example with the figure eight knot.

\begin{figure}[!ht]
	\psfrag{a-ast b = c}{\huge$a\ast b = c \quad : \quad ma+(1-m)b=c$}
	\psfrag{b-ast a = d}{\huge$b\ast a = d \quad : \quad mb+(1-m)a=d$}
	\psfrag{b-ast d = c}{\huge$b\ast d = c \quad : \quad mb+(1-m)d=c$}
	\psfrag{a-ast c = d}{\huge$a\ast c = d \quad : \quad ma+(1-m)c=d$}
	\psfrag{a}{\huge$a$}
	\psfrag{b}{\huge$b$}
	\psfrag{c}{\huge$c$}
	\psfrag{d}{\huge$d$}
	\psfrag{M}{\huge$$}
	\psfrag{M'}{\huge$$}
	\centerline{\scalebox{.35}{\includegraphics{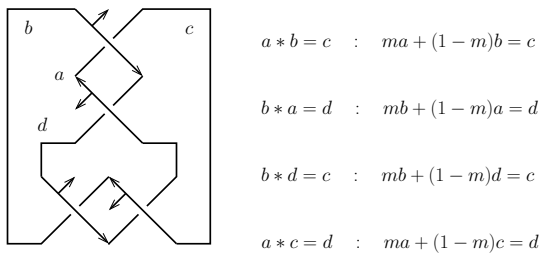}}}
	\caption{The equations for the colorings beyond Fox of the Figure $8$ knot; coloring condition: $a\ast b = ma+(1-m)b$.}\label{fig:fig8-LAQ}
\end{figure}

We finally consider the case where we evaluate $T$ at other integer values, see Figure \ref{fig:fig8-LAQ}. This gives rise to the colorings beyond Fox, the topic of the current article. We elaborate on this in Subsection \ref{subsect:metho} right below.

\bigbreak

\subsection{Methodology for the colorings using the other Alexander quandles.}\label{subsect:metho}

As in the case of Fox colorings, where we implicitly choose a dihedral quandle for target quandle, if we choose another linear Alexander quandle ($a\ast b := ma +(1-m)b$) for target quandle, we will obtain a system of linear homogeneous equations over the integers whose solutions yield the colorings of the knot under study with respect to the chosen linear Alexander quandle. We remark that in the case of the Fox colorings (i.e, dihedral quandles for target quandles) the $m$ parameter is already chosen; it is $m=-1$.

The  \emph{linear Alexander quandles} constitute another family of quandles that contains the dihedral quandles. Each linear Alexander quandle is indexed on the modulus, say $n$ (which is its order) and a second integer parameter, say $m$, such that  $(n, m) = 1$. We recover the dihedral quandles by setting $m=-1$,  for each modulus $n$. The operation here is
\[
a \ast b = ma + (1-m)b \qquad \text{ mod } n
\]
The requirement $(n, m) = 1$ guarantees the right-invertibility of the operation i.e, the second axiom for quandles. We warn the reader that different choices of $m$ for the same modulus $n$ may give rise to isomorphic linear Alexander quandles. Notwithstanding, if we choose a prime modulus $n$, then each of the integers $1<m<n$ comply with $(n, m) = 1$. Moreover, these are the only linear Alexander quandles with the indicated prime order. Thus, for each odd prime $n$, each $m$ such that $1 < m < n$ corresponds to a non-isomorphic quandle  \cite{Nelson}. In the current article we will always choose a prime modulus, unless otherwise stated. Furthermore, once $m$ is chosen, the prime modulus, say $p$, should be a factor of the Alexander polynomial of the knot at stake and evaluated at $m$, leaning on the results of Subsection \ref{subsect:Alex-Fox-other-colorings}. In this article we will prefer this prime factor to be the reduced Alexander polynomial evaluated at the given $m$. Table \ref{Ta:Delta^0_K (m) = m^2-3m+1} shows the values of the reduced Alexander polynomial of the figure eight knot at the first  few positive integers.

\begin{definition}
Given a knot $K$, we call $m$-determinant of $K$, notation $m-\det_K$, the value of the reduced Alexander polynomial of $K$ evaluated at $m$. We will  write often $m-\det$ for $m-\det_K$ when it is clear from context which $K$ we are referring to.
\end{definition}

\begin{table}[h!]
\begin{center}
\begin{tabular}{| c ||  c |   c  |   c  |   c  |   c  |   c  |   c  |   c  |   c  |   c  |   c  |   c | }\hline
$m$                            & $2$ & $3$ & $4$ & $5$ & $6$ & $7$ & $8$ & $9$  & $10$ & $11$ & $12$ & $13$ \\ \hline
$\Delta^{0}_K (m) = m^2-3m+1 $  &  $-1$    & $4$   & $5$   & $11$  &  $19$    & $29$  &  $41$   & $55$   &  $71$       & $89$   &  $109$   & $131$    \\ \hline
\end{tabular}
\caption{$\Delta^0_K (m) ( = m-\det_K) = m^2-3m+1$, the reduced Alexander polynomial of the figure-eight knot, as a function of $m$ for the first few positive integers.}
\label{Ta:Delta^0_K (m) = m^2-3m+1}
\end{center}
\end{table}

\begin{definition}\label{def:fox-n-m}
For positive integer $n>2$ and integer  $m$ such that $a\ast b = ma +(1-m)b$, mod $n$ is a quandle over $\mathbf{Z}_n$, and knot $K$, we call   $(n, m)$-{\bf colorings} of $K$, the homomorphisms from the knot quandle of $K$ to the linear Alexander quandle of order $n$ and  parameter $m$. We note that, at each crossing,  there is a preferred under-arc which receives the product, in the case of  $(n, m)$-\emph{colorings} for $m\neq -1$ mod $n$ - see Figure \ref{fig:m-xing}.
\end{definition}

\begin{figure}[!ht]
	\psfrag{a}{\huge$a$}
	\psfrag{b}{\huge$b$}
	\psfrag{c = ma+(1-m)b}{\huge$a\ast b = ma+(1-m)b$}
	\centerline{\scalebox{.35}{\includegraphics{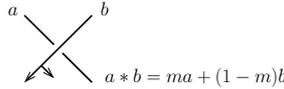}}}
	\caption{The coloring condition at a crossing for a \emph{Fox} $n, m$-\emph{coloring}.}\label{fig:m-xing}
\end{figure}

We leave it to the reader to show that for $m=-1$ no orientation of the diagram is needed. We conclude this Subsection with preliminary results on the minimum number of colors, Proposition \ref{prop:min3} and Corollary \ref{cor:min3}, below.
\begin{definition}\label{def:splitlink}
Let $K$ be a link $($multiple component knot$)$. $K$ is said to be a {\bf split link} if there exist disjoint balls in $3$-space and a deformation of $K$ such that some components of deformed $K$ are in one of the balls while the other components are in the other ball.

If $K$ is not a split link, $K$ is said to be a {\bf non-split link}.

We  regard a one-component knot as a non-split link.
\end{definition}
\begin{prop}\label{prop:min3}
If a knot or non-split link admits a non-trivial  $(p, m)$-\emph{coloring} with $p$ odd prime and $1<m<p$, then at least three colors are needed to assemble such a coloring.
\end{prop}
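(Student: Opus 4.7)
The plan is to argue by contradiction. Suppose a non-trivial $(p,m)$-coloring of the diagram $D$ uses exactly two distinct colors $a, b \in \mathbf{Z}_p$ with $a \neq b$, and derive a contradiction from the hypothesis $1 < m < p$ and the non-split assumption.

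First I would rule out ``bichromatic'' crossings. At any crossing, let $\alpha$ be the incoming under-arc, $\beta$ the over-arc, and $\gamma = m\alpha + (1-m)\beta$ the outgoing under-arc (mod $p$). If both colors $a,b$ appear among the arcs at this crossing, then $\{\alpha,\beta\} = \{a,b\}$ in one of two orderings. In the case $\alpha = a,\, \beta = b$, we get $\gamma = ma + (1-m)b$; requiring $\gamma \in \{a,b\}$ forces either $m(a-b) = 0$ or $(1-m)(a-b) = 0$ in $\mathbf{Z}_p$. Since $1 < m < p$, both $m$ and $m-1$ are units mod $p$, so $a = b$, a contradiction. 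The symmetric case $\alpha = b,\, \beta = a$ is identical. Hence every crossing of $D$ is \emph{monochromatic}: all three arcs meeting at it carry the same color.

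Next I would propagate this locally monochromatic property globally using the non-split hypothesis. Define a graph $G$ whose vertices are the arcs of $D$ and whose edges join two arcs that share a crossing. A non-split link has a connected underlying $4$-valent shadow, from which one easily checks that $G$ is connected: following the strands of the diagram, every traversal of an under-crossing links the incoming arc both to the outgoing under-arc and to the over-arc. Since every crossing is monochromatic, adjacent vertices in $G$ carry the same color, and connectedness forces all arcs of $D$ to carry a single color. This contradicts the assumption that two distinct colors appear, and hence contradicts non-triviality.

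The only subtlety I anticipate is the invocation of connectedness of $G$ for a non-split link; this should be handled by a short appeal to Definition \ref{def:splitlink} (a non-split diagram cannot be separated by a disc in the plane, so its shadow is connected), after which the arc-adjacency graph inherits connectedness from the connectedness of the shadow. Everything else is a direct computation in $\mathbf{Z}_p$ using that $m$ and $1-m$ are both invertible precisely because $1 < m < p$; this is where the hypothesis on $m$ is used, and it explains why the dihedral case ($m = -1$) fits into the same framework as the other linear Alexander quandles.
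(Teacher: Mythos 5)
Your proposal is correct and is essentially the paper's argument in contrapositive form: the same crossing-level computation (using that both $m$ and $m-1$ are units in $\mathbf{Z}_p$ because $1<m<p$) shows that two distinct colors at a crossing force a third, and non-splitness supplies exactly the connectivity that the paper invokes when it asserts a polychromatic crossing must exist. The only step you gloss over is the claim that $\{\alpha,\beta\}=\{a,b\}$ at a non-monochromatic crossing: you should note that $\alpha=\beta$ would give $\gamma=m\alpha+(1-m)\alpha=\alpha$ by idempotency, so such a crossing is automatically monochromatic --- a one-line remark that closes the gap.
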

\begin{proof}
If there is a non-trivial coloring, there has to be at least one crossing where  two distinct colors meet (call these colors $a$ and $b$). Otherwise the link would be split. We will show that there has to be a third color in order for the  $(p, m)$-\emph{coloring} condition to be satisfied at this crossing. We prove this by showing that the existence of only two colors in this crossing implies they are equal.
\begin{itemize}
\item Assume the over-arc is assigned the same color, say $a$, as one of the under-arcs.
\begin{enumerate}
\item Assume that under-arc is the one that does not receive the product. Then the coloring condition yields the color on the arc that receives the product. This is $ma+(1-m)a = a$ which implies the three colors meeting at this crossing are the same. This does not comply with the hypothesis;
\item Assume now the over-arc and the under-arc which receives the product are both colored $a$. Then $a=mx+(1-m)a$ where $x$ is the yet unknown color of the other under-arc. The equation simplifies to $mx=ma$ and since $p$ is prime and $1<m<p$, then $m$ is invertible so the equation further simplifies to $x=a$ which again is a contradiction.
\end{enumerate}
\item Then the color on the over-arc has to be distinct from the colors on the under-arcs. Forcing the existence of two colors, this implies that the over-arc is colored $b$ and the under-arcs are colored $a$. Applying the coloring condition we obtain $a=ma+(1-m)b$ which simplifies to $(m-1)(b-a)=0$ mod $p$ which implies that $b=a$, since $p$ is prime and $0<m-1<p$. This is again a contradiction.
\end{itemize}The proof is complete.
\end{proof}
\begin{cor}\label{cor:min3}
Keeping the conditions of Proposition \ref{prop:min3} and setting $p=3$, then it takes exactly $3$ distinct colors to assemble such a non-trivial coloring.
\end{cor}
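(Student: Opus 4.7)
The plan is to sandwich the minimum number of colors between two bounds that both equal $3$. The lower bound comes for free from Proposition \ref{prop:min3}: with $p=3$ and $1<m<p$ (so $m=2$), any non-trivial $(p,m)$-coloring of a knot or non-split link requires at least three distinct colors.

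For the upper bound, I would observe that the colors of a $(p,m)$-coloring are, by definition, elements of $\mathbf{Z}_p$, so when $p=3$ the entire palette $\mathbf{Z}_3=\{0,1,2\}$ has cardinality $3$. Hence any coloring whatsoever, whether trivial or not, uses at most $3$ distinct colors, and in particular the non-trivial coloring guaranteed by the hypothesis uses at most $3$. Combining the two bounds yields exactly $3$.

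The only subtlety worth noting is that we need some non-trivial coloring to actually exist in order for the statement ``it takes exactly $3$'' to be meaningful; this is precisely the standing hypothesis inherited from Proposition \ref{prop:min3}. There is no real obstacle here — the corollary is essentially a pigeonhole observation on top of the proposition, since the palette size $|\mathbf{Z}_3|=3$ coincides with the universal lower bound established for $p$ odd prime.
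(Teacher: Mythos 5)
Your argument is correct and coincides with the paper's own proof: the lower bound of $3$ comes from Proposition \ref{prop:min3}, and the upper bound is simply that only $3$ colors are available in $\mathbf{Z}_3$. Nothing further is needed.
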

\begin{proof}
In this case, the least number of colors expected (Proposition \ref{prop:min3}) and the  number of colors available both equal $3$. The proof is complete.
\end{proof}

This article is devoted mainly to generalizing the results on minimum number of colors from dihedral quandles to linear Alexander quandles and to trying to expose differences that may occur. Proposition \ref{prop:min3} is an example of a phenomenon already known for Fox colorings. An instance which does no occur for Fox colorings is illustrated in Subsection \ref{subsect:anti-KH behavior}.

{\bf Remark} We  will assume that the knot under study will admit non-trivial $m$-colorings for a given prime $p$ standing for the value of the reduced Alexander polynomial of the knot. The parameter $m$ will eventually turn out to depend on $p$. For instance for the trefoil knot, $m^2-m+1 = p$, which is realized for $p=3$ and $m=-1$.

\section{The palette graph}\label{sect:palette}

In this Section we study the coloring structure where we use for target quandles linear Alexander quandles of type $(p, m)$ - $p$ is an odd prime and $m$ is  an integer such that, preferably, but not necessarily, $1<m<p$. We estimate the determinants of the coloring matrices for these target quandles and we eventually arrive at a lower bound for the number of colors (Theorem \ref{thm:satohmin}). This is useful because when removing colors from a coloring, if we arrive at the corresponding lower bound, as dictated by Theorem \ref{thm:satohmin}, we will know we have reached the minimum number of colors. Figure \ref{fig:6-1-m-3-reducing} provides such an example although the lower bound attained is not provided by Theorem \ref{thm:satohmin} but by Proposition \ref{prop:min3}. The work in this Section is a generalization of \cite{NNS}.

We begin by examining the matrices that are of the sort that arise as coloring matrices for a knot, as we have described them in the previous Section. For instance, the coloring matrix associated with Figure \ref{fig:fig8-LAQ} is
\begin{align}
M=
\begin{pmatrix}
-m & m-1 & 1 & 0\\
m-1 & -m & 0  & 1\\
0 & -m & 1 & m-1\\
-m & 0 & m-1 & 1
 \end{pmatrix}
\end{align}
This analysis leads us to estimate the minimum number of colors from below and obtain the result in Theorem \ref{thm:satohmin} - see proof in Theorem \ref{thm:satohminbis} at the end of this Section.

\begin{definition}\label{def:MatNm}$[\emph{$Mat_N^m$ \text{ matrices}}]$
Let $m$ be an integer and $N$ a positive integer. Let $Mat_N^m$ be the set of $N\times N$ matrices over the integers and such that
\begin{itemize}
\item each row contains at most one $-m$, at most one $m-1$, and at most one $1$, all other entries being $0$, should there be any more entries in the row.
\end{itemize}
\end{definition}

\begin{lem}\label{lem:MatNm}
If $X\in Mat_N^m$ then $|\det X | \leq M^N$, where $M := \max\{ |m|, |m-1| \}$.
\end{lem}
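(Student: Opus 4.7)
The plan is to proceed by induction on $N$. The base case $N=1$ is immediate, since any $X = (x) \in Mat_1^m$ has $x \in \{0, -m, m-1, 1\}$, and hence $|\det X| = |x| \leq M$.

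For the inductive step ($N \geq 2$) I would use Laplace expansion along a judiciously chosen row of $X$, supported by two structural observations. First, removing any one row and any one column from a matrix in $Mat_N^m$ yields again a matrix in $Mat_{N-1}^m$, since the ``at most one of each value per row'' constraint is preserved; this makes the inductive hypothesis applicable to every cofactor. Second, if every row of $X$ contains all three non-zero values $-m, m-1, 1$, then each row sums to $-m + (m-1) + 1 = 0$, so the all-ones column vector lies in $\ker X$ and $\det X = 0 \leq M^N$ trivially. Hence one may assume some row $i$ of $X$ has at most two non-zero entries. If row $i$ has zero or one non-zero entry, Laplace expansion along it combined with the inductive hypothesis immediately gives $|\det X| \leq M \cdot M^{N-1} = M^N$.

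The delicate remaining case is when row $i$ has exactly two non-zero entries $u, v \in \{-m, m-1, 1\}$. Here the triangle inequality applied to Laplace expansion yields only the loose bound $|\det X| \leq (|u|+|v|)\, M^{N-1}$, which for $\{u,v\} = \{-m, m-1\}$ equals $(2M - 1)\, M^{N-1}$ and overshoots the desired $M^N$ by essentially a factor of $2$. To close this gap I would refine the argument in one of two ways: split row $i$ further via multilinearity of the determinant and regroup the resulting terms using the identity $-m + (m-1) + 1 = 0$ now at the level of the cofactors; or, alternatively, observe that the signed cofactor vector $\bigl( (-1)^{i+j} \det M_{ij} \bigr)_j$ lies in the kernel of the $(N-1)\times N$ matrix consisting of the other rows of $X$. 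Combined with the sparsity and value constraints on those rows, this orthogonality forces a cancellation in $-m\, C_\alpha + (m-1)\, C_\beta$ that a one-step triangle inequality cannot see.

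The main obstacle is precisely this two-non-zero-entries case: a naive single Laplace step loses a factor of roughly $2$, and recovering the sharp bound $M^N$ requires using the ``at most one of each of $-m, m-1, 1$ per row'' constraint in a global, combinatorial way, exploiting the relations among the cofactors rather than bounding them one at a time.
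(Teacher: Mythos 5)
Your setup is sound --- the induction, the closure of $Mat_N^m$ under deletion of a row and a column, the observation that a row with at most one non-zero entry (or with a benign pair whose absolute values sum to at most $M$) yields the bound by a single Laplace expansion --- and you have correctly located the difficulty in rows carrying two non-zero entries whose absolute values sum to more than $M$. But the proposal stops there: neither of your two suggested repairs is carried out, and as stated neither obviously closes the gap. Note also that your dichotomy (``some row has at most two non-zero entries'') does not guarantee a \emph{benign} row: for $m>0$ the pair $\{1,\,m-1\}$ satisfies $1+|m-1|=m=M$, but the pairs $\{-m,\,1\}$ and $\{-m,\,m-1\}$ do not, and the row your counting argument produces may well be one of the bad ones. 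So there is a genuine gap, and it is exactly where you say it is.

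The paper closes it with a concrete device you almost name in your option (a), but executed as a column operation rather than a row splitting. For $m>0$ the trichotomy is: (1) some row contains no $-m$ --- then its non-zero entries are among $\{1,\,m-1\}$ and $1+(m-1)=M$, so expansion along that row works; (2) some row is $-m$ alone --- expansion gives $|-m|\cdot M^{N-1}\leq M^N$; (3) otherwise every row contains a $-m$ and at least one of $m-1$, $1$. In case (3), after arranging the $(1,1)$ entry to be $-m$, one replaces the first column by the negated sum of all the columns. This preserves $|\det|$ (by multilinearity all the extra terms have a repeated column), and since each row then sums to one of $0$, $-1$, $1-m$, the new first column has entries in $\{0,\,1,\,m-1\}$; in particular the new first row contains no $-m$, and one is back in case (1). (For $m<0$ the roles of $-m$ and $m-1$ are exchanged, since there $1+|m|=|m-1|=M$.) This is precisely the global use of the identity $-m+(m-1)+1=0$ that you anticipated; the missing step is to apply it to a column so that a bad row is converted into a benign one without changing the determinant.
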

\begin{proof}
The proof is by induction on $N$. If $N=1$ then the only entry in $X$ is either $0$, $-m$, $m-1$ or $1$ and the inequality holds. Now for the induction step. For positive $m$, we split the proof into three instances.
\begin{enumerate}
\item If $X$ has a row without a $-m$, then the minor expansion along this row yields
\[
| \det X | \leq  1\cdot M^{N-1} + (m-1)\cdot M^{N-1}  \leq M^N
\]
\item  If $X$ has a row with exactly one $-m$, apart from the $0$'s, then
\[
| \det X | \leq |-m|\cdot M^{N-1} \leq M^N
\]
\item Otherwise, any row of $X$ has a $-m$ and at least one of $m-1$ or $1$. If need be by swapping columns, we let  its $(1, 1)$ entry be $-m$, and still call this matrix $X$. We let $\vec{v}_j$ be the $j$-th column of $X$. Then the following matrix satisfies $1$. above, by way of its first row:
\[
Y = \bigg(  -\sum_{j=1}^N \vec{v}_j ,\,  \vec{v}_2, \,  \vec{v}_3, \dots , \, \vec{v}_N \bigg)
\]
Then,
\[
|\det X | = |\det Y| \leq M^N.
\]
\end{enumerate}

For negative $m$ the argument is analogous to the one above but $m-1$ takes up the role of $-m$. In this way, the splitting into the three instances is now the following.
\begin{enumerate}
\item If $X$ has a row without an $m-1$, then the minor expansion along this row yields
\[
| \det X | \leq  1\cdot M^{N-1} + (-m)\cdot M^{N-1}  \leq M^N
\]
\item  If $X$ has a row with exactly one $m-1$, apart from the $0$'s, then
\[
| \det X | \leq |m-1|\cdot M^{N-1} \leq M^N
\]
\item Otherwise, any row of $X$ has an $m-1$  and at least one of $-m$ or $1$.  If need be by swapping columns, we let  its $(1, 1)$ entry be $m-1$, and still call this matrix $X$. We let $\vec{v}_j$ be the $j$-th column of $X$. Then the following matrix satisfies $1$. above, by way of its first row:
\[
Y = \bigg(  -\sum_{j=1}^N \vec{v}_j ,\,  \vec{v}_2, \,  \vec{v}_3, \dots , \, \vec{v}_N \bigg)
\]
Then,
\[
|\det X | = |\det Y| \leq M^N.
\]
\end{enumerate}
The proof is complete.
\end{proof}

\begin{definition}\label{def:palettegraph}$[$$(n, m)$-palette graphs and $(n, m)$-palette graphs of diagrams.$]$

Let $n$ be an integer greater than $2$, and $m$ an integer such that $a\ast b := ma+(1-m)b$ defines a quandle over $\mathbf{Z}_n$. Let $S$ be a subset of $\mathbf{Z}_n$. The {\rm\bf $(n, m)$-palette graph} of $S$ is a directed graph whose vertices are the elements of $S$ and whose directed edges are defined as follows. For each  $s_1, s_3 \in S$,  there is an edge from $s_1$ to $s_3$ if there exists an $s_2 \in S$ such that $m s_1 + (1-m) s_2 = s_3$, mod $n$. This edge is labeled $s_2$. A sequence $(s_1, s_2, s_3)$ from $S$ as above is called a {\rm\bf local $(n, m)$-coloring of a potential crossing mod $n$} - or simply a {\rm\bf local coloring}, when the context is clear.

Since a diagram equipped with a non-trivial $(n, m)$-coloring $(n$ and $m$ as above$)$ uses a subset $S$ of colors mod $n$, we call the {\rm\bf $(n, m)$-palette graph of the coloring}, the $(n, m)$-palette graph whose vertices correspond to the colors used in this coloring and whose directed edges $($together with the source vertex and the target vertex$)$ correspond to the distinct solutions of the coloring condition present in the coloring under study. A sequence $(s_1, s_2, s_3)$ from $S$ such that $s_1$ and $s_3$ are the colors of the under-arcs at a crossing whose over-arc is colored $s_2$, and $m s_1 + (1-m) s_2 = s_3$, mod $n$ is called a {\rm\bf local $(n, m)$-coloring of a crossing mod $n$} - or simply a {\rm\bf local coloring}, when the context is clear. If need be, we will add broken lines to the graph for those edges that belong to the palette graph of $S$ but do not correspond to colors on over-arcs in the coloring under study i.e., do not belong to the $(n, m)$-palette graph of the coloring.

In general, we consider the palette graph of a coloring, usually drawn next to the diagram, without the broken lines. Also, ``palette graph'' will mean the palette graph of a coloring, unless explicitly stated.

Finally, for finite $S$ and for any diagram, we remark that both notions $($palette graph and palette graph of coloring$)$ make sense for $n=0$ i.e., upon replacement of $\mathbf{Z}_n$ for $\mathbf{Z}$, above - see Figure \ref{fig:6-1}, for example. The corresponding colorings are called \rm{ integral colorings}. This instance will also be considered in the sequel, namely in Lemma \ref{lem:dets} and Corollary \ref{cor:integer-roots-Alexpoly}.
\end{definition}

In Figure \ref{fig:8-7-Fox} we give an illustrative example for this definition. We remark that the graphs are not directed in Figure \ref{fig:8-7-Fox} since the quandle operation does not have a preferred product for dihedral quandles. On the other hand, in Figure \ref{fig:6-3} the graphs are directed because here the quandle at stake is a linear Alexander quandle other than a dihedral quandle.

\begin{nota}
The following remark is in order here. In the sequel, the figures pertaining to colorings of knots by linear Alexander quandles display a boxed pair of integers in the top left. This pair of integers stands for the pair $(p, m)$ which characterizes the linear Alexander quandle which is being used for target quandle. $p$ will be an odd prime, or $0$, and is the value of the reduced Alexander polynomial of the knot under study, evaluated at the corresponding $m$. For Fox colorings $($dihedral quandle for target quandle$)$, $m=-1$; for all other linear Alexander quandles for target quandles, $m$ will satisfy $1< m < p$.

For example, in Figure \ref{fig:8-7-Fox}, the boxed ``$23, -1$'', stands for $|\Delta^0_{8_7}(-1)|=23$.
\end{nota}

\begin{figure}[!ht]
	\psfrag{8-7}{\huge$\mathbf{8_7}$}
	\psfrag{0}{\huge$0$}
	\psfrag{1}{\huge$1$}
	\psfrag{2}{\huge$2$}
	\psfrag{3}{\huge$3$}
	\psfrag{6}{\huge$6$}
	\psfrag{11}{\huge$11$}
	\psfrag{16}{\huge$16$}
	\psfrag{21}{\huge$21$}
	\psfrag{a}{\huge$a$}
	\psfrag{b}{\huge$b$}
	\psfrag{Fox Colorings}{\huge$\textbf{Fox Colorings}$}
	\psfrag{c=2b-a}{\huge$c=2b-a$}
	\psfrag{23, -1}{\huge$\mathbf{23, -1}$}
	\psfrag{m=2}{\huge$\mathbf{m=2}$}
	\centerline{\scalebox{.35}{\includegraphics{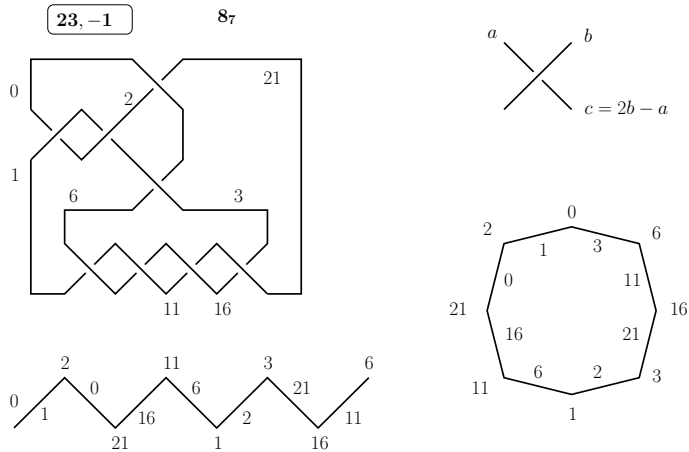}}}
	\caption{The knot $8_7$ whose determinant is $23$.   On the left-hand side, a diagram  equipped with a non-trivial $23$-coloring. The graph depicted in the bottom on the right is the palette graph. The other graph is a spanning tree of the palette graph.}\label{fig:8-7-Fox}
\end{figure}

\begin{lem}\label{lem:palettegraphknotsconnected}
For knots $($1-component links$)$, the palette graph is connected.
\end{lem}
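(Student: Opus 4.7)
The plan is to exploit the fact that a one-component link traverses every arc of any of its diagrams along a single closed loop, and to show that this traversal realizes each consecutive pair of colors as an edge of the palette graph of the coloring.

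First, fix a diagram $D$ of $K$ carrying the non-trivial $(n,m)$-coloring whose palette graph $G$ is under consideration. Recall that the vertices of $G$ are exactly those elements of $\mathbf{Z}_n$ that occur as colors on some arc of $D$, while the (directed) edges record the local colorings $(s_1,s_2,s_3)$ that are actually realized at crossings of $D$. Pick any two vertices $u,v$ of $G$; by definition there exist arcs $\alpha$ and $\beta$ of $D$ colored $u$ and $v$ respectively.

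Since $K$ has only one component, the underlying immersed curve of $D$ is a single closed loop. Traverse the loop starting in the interior of $\alpha$ until a point in the interior of $\beta$ is reached; along the way one passes through a finite sequence of under-crossings (the over-crossings leave the ambient arc unchanged and can be ignored for this purpose). Each under-crossing traversed converts the current arc into the next one; if the former is colored $s_1$, the over-arc is colored $s_2$ and the latter is colored $s_3$, then by the coloring condition we have $ms_1 + (1-m)s_2 = s_3$ mod $n$, so $(s_1,s_2,s_3)$ is a local coloring realized at a crossing of $D$, and hence corresponds to an edge of $G$ (directed one way or the other, depending on which under-arc receives the product at that crossing). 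Concatenating these edges yields a walk in the underlying undirected graph of $G$ from $u$ to $v$, establishing connectedness.

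I expect no serious obstacle here: the subtle point is merely to notice that the edges we use are genuinely in the palette graph of the coloring (rather than merely in the palette graph of the color set $S$), which is automatic because every pair of consecutive arcs meets at an actual crossing of $D$. The argument also shows why the one-component hypothesis is essential: in a split link the traversal of one component never crosses under a different one, so the colors of the two sub-diagrams need not be joined by any edge of $G$.
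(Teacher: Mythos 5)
Your proposal is correct and follows essentially the same route as the paper's own proof: walk along the single closed loop of the diagram, observe that each under-crossing realizes an edge (or a stay, at a monochromatic crossing) of the palette graph, and conclude that any two colors are joined by a walk in the underlying undirected graph. No gap to report.
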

\begin{proof}Walk along the oriented diagram (equipped with a non-trivial (p, m)-coloring) starting at a given under-arc. Mirror that walk in the palette graph of the coloring, starting at the vertex whose color is the color of the under-arc where you started in the diagram. If you are going under a monochromatic crossing in the diagram, you stay in the palette graph, at the vertex whose color is the color of the monochromatic crossing at stake. Otherwise move, in the palette graph, to the vertex whose color is the color of the under-arc on the other side of the crossing you just went under in the diagram. Since there is only one component in the diagram, you must visit all colors in the diagram, thus visiting all colors in the palette graph. Hence there is a path in the palette graph connecting any two colors (i.e., vertices).
\end{proof}
\begin{lem}\label{lem:consequence}
For each circuit in the palette graph of a coloring there is an equation, $E$, (represented by one of the edges of the circuit) which is a consequence of the other equations (represented by the other edges in the circuit). Thus, the coloring at stake is still a solution of the system of equations obtained by removal of equation $E$ from the system of coloring conditions associated with the colored diagram at stake. \end{lem}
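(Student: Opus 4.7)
Proof proposal: The plan is to show that once we single out any one edge of the circuit, the remaining edges force the relation associated to that edge, so dropping its equation cannot affect the given coloring. First, fix notation: let the circuit be $v_1 \to v_2 \to \cdots \to v_k \to v_{k+1} = v_1$, with the directed edge from $v_i$ to $v_{i+1}$ carrying the label $w_i$. Each edge corresponds to at least one crossing of the diagram; for each one, single out one such crossing and let $E_i$ denote its coloring equation, so that evaluated at the given coloring $E_i$ reads $m v_i + (1-m) w_i = v_{i+1}$ mod $n$.

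The first step is to take $E = E_k$ as the equation we intend to eliminate and iterate $E_1, \ldots, E_{k-1}$ to express $v_k$ as an explicit combination of $v_1$ and $w_1, \ldots, w_{k-1}$ with coefficients given by powers of $m$. The second step is to substitute that expression into $E_k$; the resulting identity is the ``circuit-closure'' condition, a single $\mathbf{Z}_n$-equation in the values $v_1, w_1, \ldots, w_k$ which holds by the very fact that the circuit exists in the palette graph of the coloring (all $k$ local-coloring relations are simultaneously realized). Hence $E_k$ is forced once $E_1, \ldots, E_{k-1}$ are known to hold, which is exactly the meaning of ``$E_k$ is a consequence of the other equations in the circuit.'' From here the second part of the lemma is immediate: the given coloring already solves the full system of coloring conditions, so it certainly solves the reduced system obtained by removing $E_k$, and the reason removal is legitimate is precisely the redundancy just established.

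The main subtlety I expect in the write-up is keeping straight the sense in which ``$E_k$ is a consequence of $E_1, \ldots, E_{k-1}$.'' The $E_i$ are equations in arc variables and may well involve pairwise disjoint arcs, so $E_k$ need not be a linear combination of the others as linear forms in the arc variables. What is true, and what suffices here, is the evaluated, coloring-specific statement: for the specific colors supplied by the circuit, the closing-up of the circuit is an algebraic identity which, together with $E_1, \ldots, E_{k-1}$, yields $E_k$. Making this distinction crisp is the one delicate point; after it is made, the rest of the argument is a short algebraic manipulation.
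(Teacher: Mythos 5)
Your argument is essentially the paper's: traverse the circuit, compose the edge relations to express the final vertex's color in terms of the initial one and the edge labels, and observe that the closing edge's relation is then forced at the given coloring; the distinction you draw at the end --- a dependence among linear forms in the arc variables versus the evaluated, coloring-specific redundancy --- is the right one, and is exactly what the paper's chain of equalities establishes. The one point you omit: the palette graph is directed, and the circuits relevant here (those killed by passing to a spanning forest) are cycles of the \emph{underlying undirected} graph, so some edges may be traversed against their orientation. For such an edge the relation $m v_i + (1-m)w_i = v_{i+1}$ must be replaced by the one obtained by solving for the source, namely $v_{i+1} = m^{-1}v_i + (1-m^{-1})w_i$; this is why the paper's coefficients $m_i$ are allowed to be either $m$ or $m^{-1}$. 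Since $m$ is invertible mod $n$ by the quandle condition, your telescoping goes through verbatim after this adjustment, so the gap is small but should be closed for the lemma to apply to spanning forests as intended.
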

\begin{proof}
Pick a circuit in the palette graph with say n vertices, $v_1, ..., v_n$, where $v_i$ is succeeded by $v_{i+1}$ as you go along the circuit and $v_{n+1}$ is $v_1$. Let $w_{i, i+1}$ be the color of the edge connecting $v_i$ to $v_{i+1}$. Then
\begin{align*}
&m_0v_1+(1-m_0)w_{n, 1}= v_n = m_{n-1}v_{n-1} + (1-m_{n-1})w_{n-1, n} = \\
& = m_{n-1}[m_{n-2}v_{n-2} + (1-m_{n-2})w_{n-2, n-1}] + (1-m_{n-1})w_{n-1, n} = \cdots =\\
& = \bigg(\prod_{i=1}^{n-1}m_{n-i}\bigg) v_1 +  \bigg(\prod_{i=2}^{n-1}m_{n-i}\bigg) (1-m_2) w_{3, 2} + \cdots + m_{n-1} (1 - m_{n-2})w_{n-2, n-1} + (1-m_{n-1})w_{n-1, n}
\end{align*}
where the $m_i$'s mean  either $m$ or $m^{-1}$, according to the coloring.
\end{proof}

In this way, we are interested in the graph obtained from the palette graph by taking the spanning tree of each of its components; we call this graph the {\it spanning forest} of the palette graph. If we are working with a knot (i.e., a 1-component link), the spanning forest of its palette graph is a spanning tree, thanks to Lemma \ref{lem:palettegraphknotsconnected}.
\begin{figure}[!ht]
	\psfrag{0}{\huge$0$}
	\psfrag{1}{\huge$1$}
	\psfrag{2}{\huge$2$}
	\psfrag{3}{\huge$3$}
	\psfrag{4}{\huge$4$}
	\psfrag{5}{\huge$5$}
	\psfrag{a}{\huge$a$}
	\psfrag{b}{\huge$b$}
	\psfrag{2a-b}{\huge$c=2a-b$}
	\psfrag{6-3}{\huge$\mathbf{6_3}$}
	\psfrag{p=7}{\huge$\qquad \mathbf{p=7}$}
	\psfrag{7, 2}{\huge$\mathbf{7, 2}$}
	\centerline{\scalebox{.35}{\includegraphics{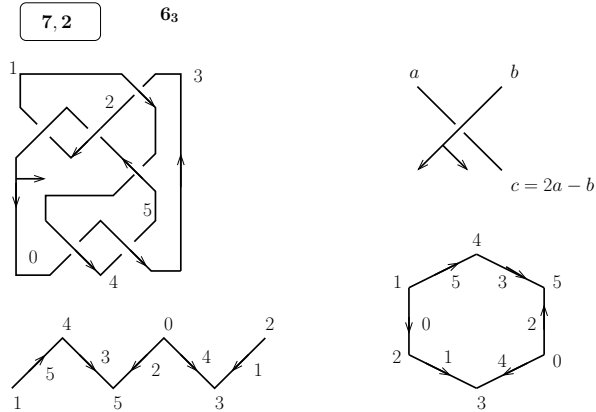}}}
	\caption{The knot $6_3$ whose $m$-determinant is $m^4-3m^3+5m^2-3m+1$. For $m=2$ we obtain $2-\det = 7$.  On the left-hand side, a diagram  equipped with a non-trivial $(2,7)$-coloring. The graph depicted at the bottom right is the palette graph of the coloring. The other graph is the spanning forest (in fact, spanning tree) of the former graph.}\label{fig:6-3}
\end{figure}


\begin{definition}\label{def:madjacency matrix}
Keeping the notation and terminology from Definition \ref{def:palettegraph}, let $G$ be an $(n, m)$-palette graph with $k$ colors. Let $F$ be a spanning forest of $G$. In the sequel, the relationships ``vertex-variable-column'' and ``edge-equation-row'' should be born in mind. We now refer the reader to Figure \ref{fig:palettegraph}. We call the {\rm\bf $(n, m)$-adjacency matrix} of $F$, the matrix, $A$, whose $(i, j)$-entry is
\begin{itemize}
\item $1$, if edge $e_i$ ends at vertex $c_j$,
\item $-m$, if edge $e_i$ begins at vertex $c_j$,
\item $m-1$, if edge $e_i$ begins at vertex $c_l$ and ends at vertex $c_k$ such that $m\cdot c_l + (1-m)\cdot c_j = c_k$, mod $n$ $($i.e., if $e_i$ is labeled with color $c_j)$,
\item $0$, otherwise.
\end{itemize}
\end{definition}
\begin{figure}[!ht]
	\psfrag{dots}{\huge$\cdots$}
	\psfrag{a}{\huge$a$}
	\psfrag{b}{\huge$b$}
	\psfrag{c}{\huge$c = ma+(1-m)b = a\ast b$}
	\psfrag{a1}{\huge$c_s$}
	\psfrag{b1}{\huge$e_r \leftarrow c_u$}
	\psfrag{c1}{\huge$c_t = mc_s+(1-m)c_u$}
	\psfrag{ei}{\huge$e_r$}
	\psfrag{cj}{\huge$c_s$}
	\psfrag{ck}{\huge$c_t$}
	\psfrag{cl}{\huge$c_u$}
	\psfrag{1}{\huge$1$}
	\psfrag{-m}{\huge$-m$}
	\psfrag{m-1}{\huge$m-1$}
	\centerline{\scalebox{.35}{\includegraphics{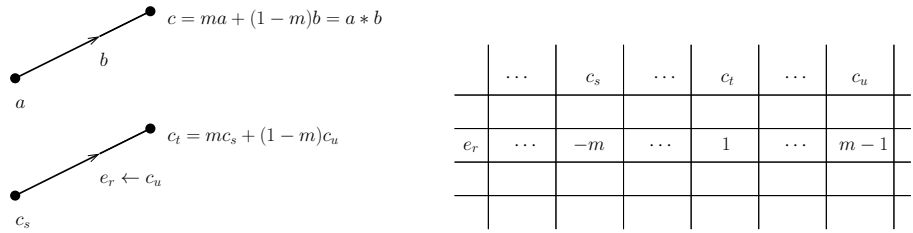}}}
	\caption{Aide-m\'emoire for the adjacency matrix.}\label{fig:palettegraph}
\end{figure}


\begin{lem}\label{lem:dets}

Let $p$ be an odd prime and $m$ be an integer. Let $K$ be a knot i.e., a $1$-component link, admitting non-trivial $(p, m)$-colorings.

Let $G$ be the $(p, m)$-palette graph of such a coloring, let $A$ be its $(p, m)$-adjacency matrix, and $A_j$ be the square $(k-1)\times (k-1)$ matrix, obtained by deletion of the $j$ column from $A$.  Then,
\begin{enumerate}
\item either $\det A_j = 0$ or $\det A_j$ is divisible by $p$; and
\item $\det A_j = \pm 1$, mod $|m-1|$.
\end{enumerate}
\end{lem}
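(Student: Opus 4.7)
The plan is to exploit two elementary observations about the adjacency matrix $A$: that its rows have zero row sums, and that the color vector $\vec{c}=(c_1,\dots,c_k)^T$ listing the vertices of the palette graph lies in the mod-$p$ kernel of $A$. Because $K$ is a $1$-component link, Lemma \ref{lem:palettegraphknotsconnected} says $G$ is connected, so any spanning forest is in fact a spanning tree $F$ with exactly $k-1$ edges, making $A$ a $(k-1)\times k$ integer matrix.

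For Part (1), the row sums of $A$ are $-m+(m-1)+1=0$, so $A\vec{1}=\vec{0}$ already over $\mathbb{Z}$, while $A\vec{c}\equiv\vec{0}\pmod{p}$ records the coloring conditions read off the edges of $F$. The vertices of the palette graph of the coloring are by construction exactly the \emph{distinct} colors actually used, so the entries of $\vec{c}$ are pairwise distinct in $\mathbb{Z}_p$; in particular $\vec{c}$ is not a scalar multiple of $\vec{1}$ modulo $p$, so $\{\vec{1},\vec{c}\}$ is $\mathbb{Z}_p$-linearly independent. The nullity of $A$ over $\mathbb{Z}_p$ is therefore at least $2$, its rank is at most $k-2$, and every $(k-1)\times(k-1)$ minor vanishes mod $p$. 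This gives $\det A_j\equiv 0\pmod{p}$ for every $j$, which is precisely the dichotomy claimed.

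For Part (2), I would first check that within each row of $A$ the three nonzero entries $-m$, $m-1$, $1$ occur in three distinct columns: a spanning-tree edge cannot be a loop, so $c_s\neq c_t$, and the coincidences $c_s=c_u$ or $c_t=c_u$ would each force $c_s=c_t$ from the relation $mc_s+(1-m)c_u=c_t$ (using that $m$ is a unit mod $p$). Reducing $A$ entrywise modulo $m-1$ then sends $-m\mapsto -1$, $m-1\mapsto 0$, $1\mapsto 1$, so every row has exactly one $+1$ and one $-1$ and the result is the signed incidence matrix of the directed tree $F$, with edge $e_i$ pointing from $c_s$ to $c_t$. Deleting the $j$-th column is the same as rooting $F$ at the vertex $c_j$; ordering the remaining vertices by increasing distance from the root turns $A_j\bmod(m-1)$ into an upper-triangular matrix with $\pm 1$ on the diagonal, and hence $\det A_j\equiv\pm 1\pmod{m-1}$, which is the same as $\pmod{|m-1|}$.

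The main obstacle is not conceptual but bookkeeping: one must confirm that the three distinguished column positions in each row of $A$ really are distinct, so that reducing modulo $m-1$ yields a bona fide signed incidence matrix with one $+1$ and one $-1$ per row, rather than a degenerate row that would short-circuit the tree argument. Once that distinctness is secured (from the spanning-tree condition together with $m$ being a unit mod $p$), Part (1) is two lines of linear algebra over $\mathbb{Z}_p$ and Part (2) reduces to the standard fact that the reduced incidence matrix of a tree has determinant $\pm 1$.
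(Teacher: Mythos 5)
Your proposal is correct and follows essentially the same route as the paper: part (1) via the two mod-$p$ kernel vectors $\vec{1}$ and the color vector forcing rank at most $k-2$, and part (2) by reducing the adjacency matrix mod $m-1$ to the signed incidence matrix of the spanning tree. The only (immaterial) difference is in the last step of (2): you establish $\det A_j\equiv\pm1$ by ordering vertices by distance from the root $c_j$ to triangularize the reduced incidence matrix, whereas the paper exhibits the unique edge-to-incident-vertex bijection (its Claim 3.1) as the sole nonvanishing term of the permutation expansion --- two standard proofs of the same unimodularity fact --- and your explicit check that the three nonzero entries of each row occupy distinct columns is a small point the paper leaves implicit.
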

\begin{proof}
We remark that knot $6_1$ is an example where $\det A_j = 0$, for $m=2$, see Figure \ref{fig:6-1}.

\begin{enumerate}
\item Assume $\det A_j \neq 0$ over the integers. There are two independent solutions for the system of homogeneous linear equations (mod $p$) whose coefficient matrix is $A$, $\vec{x} = (1, 1, \dots \, 1)^T$ (the ubiquitous trivial coloring) and $\vec{x} = (c_1, c_2, \dots , c_k )^T$ (the non-trivial $(p, m)$-coloring the palette graph of the statement stems from). Then the rank of $A$ is at most $k-2$. Thus, $\det A_j = 0$ mod $p$, since rank of $A_j$ is at most $k-1$, mod $p$.

\item First note that the expression is trivially true for $m=2$, since  any two integers are equivalent, mod $1$. Assume now $m\neq 2$ and consider the matrix $A_j$ with the entries read mod $|m-1|$; call it $A_j'$. The entries of $A_j'$ are $0$'s, $1$'s, and $-1$'s. Specifically, the entry $(i, i')$ is $1$ if edge $e_i$ is directed to vertex $c_{i'}$, it is $-1$ if edge $e_i$ stems from vertex $c_{i'}$, and $0$ otherwise. Also note that the entries in $A_j'$ correspond to a ''subgraph'', $T'$, obtained from the spanning tree $T$ of $G$, by removing the vertex corresponding to column $j$. There is then a unique bijection, call it $\sigma$, mapping each edge in $T'$ to a vertex in $T'$ incident to that edge. Note that ``incident'' here means that the vertex may either play the role of source or of sink for the edge at stake. The existence and uniqueness of $\sigma$ is stated and proved below in Claim \ref{cl:cl}.  Then, the only non-null summand of $\det A_j'$ is the one associated with $\sigma$. Thus,
\begin{align}\label{eqn:det}
\pm 1  = a_{1,\, \sigma (1)} a_{2,\, \sigma (2)} \cdots a_{k-1,\, \sigma (k-1)} = \det A_j' = \det A_j \quad \text{ mod } |m-1|.
\end{align}
The proof of the following Claim completes the proof of this Lemma.
\end{enumerate}
\begin{cl}\label{cl:cl}
Let $n$ be an integer greater than $1$ and let $T_n$ denote a tree on $n$ vertices $($and consequently $n-1$ edges$)$. Then, upon removal of one vertex from $T_n$, say $v$, there is a unique bijection from the edges of $T_n$ to $T_n\setminus \{ v \}$ such that each edge is mapped to the vertex incident to it.
\end{cl}
\begin{proof}
The proof is by induction. It is clearly true for $n=2$. Assuming it is true for all positive integers up to a given $n$, consider the $n+1$ instance. If we remove a vertex, call it $v$, from $T_{n+1}$ which has more than one edge incident to it, then we obtain a finite number of disconnected trees, each one satisfying the induction hypothesis. Putting together the bijections for each of these trees, we obtain the bijection for $T_{n+1}\setminus \{ v \}$. If we remove a vertex, call it $V$, which has only one edge incident to it, call it $E$, let us then remove $E$ and let us also remove the other vertex incident to it, call it $U$. The resulting tree now satisfies the induction hypothesis and so there is a unique bijection from edges to vertices such that each edge is mapped to a vertex incident to it. We now augment this bijection by sending $E$ to $U$, thus obtaining the desired bijection for $T_{n+1}\setminus \{  V  \}$. The proof of Claim \ref{cl:cl} is complete.
\end{proof}
The proof of Lemma \ref{lem:dets} is complete.
\end{proof}


\begin{cor}
If $m \neq 2$ mod $p$, then $\det A_j = \pm 1 + l(m-1)$ for some integer $l$. If $m = 2$ mod $p$, then $\det A_j$ may be zero. This occurs for the knot $6_1$.
\end{cor}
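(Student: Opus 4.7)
The plan is to obtain the corollary as an almost immediate reformulation of part (2) of Lemma \ref{lem:dets}, packaged together with the explicit witness already mentioned in that lemma's proof for the exceptional case.

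First I would unpack part (2) of Lemma \ref{lem:dets}, which asserts $\det A_j \equiv \pm 1 \pmod{|m-1|}$. By definition of congruence, this gives an integer $k$ with $\det A_j = \pm 1 + k\,|m-1|$. A brief sign bookkeeping step converts $|m-1|$ into $m-1$: if $m>1$ take $l := k$; if $m<1$ take $l := -k$. In either situation one gets $\det A_j = \pm 1 + l(m-1)$ for some integer $l$, which is the first assertion of the corollary.

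Next I would address why the hypothesis $m \not\equiv 2 \pmod p$ is the right one. For $m \not\equiv 2 \pmod p$ with $m$ in the standard range $1<m<p$ (and more generally whenever $|m-1|>1$), the residue $\pm 1$ is nonzero modulo $|m-1|$, and hence $\det A_j \neq 0$. This is the essential content of the dichotomy and is what forces the alternative in the statement. For $m \equiv 2 \pmod p$, the modulus $|m-1|$ degenerates to $1$ (or, working mod $p$, to a value that no longer constrains $\det A_j$ away from $0$), and so part (2) of Lemma \ref{lem:dets} becomes vacuous; the corollary therefore can only assert ``may be zero'' in this case.

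Finally, to show that $\det A_j = 0$ genuinely occurs for $m \equiv 2 \pmod p$, I would simply invoke the $6_1$ example, already cited at the start of the proof of Lemma \ref{lem:dets} via Figure \ref{fig:6-1}, which exhibits a non-trivial $(p,2)$-coloring whose associated adjacency minor has vanishing determinant. No further computation is required, since the example is displayed in the figure. The only care needed anywhere in the proof is the sign manipulation converting $|m-1|$ to $m-1$; there is no genuine obstacle, as the corollary is essentially a translation of the lemma into the form most convenient for later use.
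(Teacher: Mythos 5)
Your proposal is correct and follows essentially the same route as the paper: both derive $\det A_j = \pm 1 + l(m-1)$ from part (2) of Lemma \ref{lem:dets} (Equation \ref{eqn:det}), observe that vanishing forces $m-1 = \pm 1$, i.e.\ $m=2$ in the standard range, and cite the $6_1$ example of Figure \ref{fig:6-1} as the witness. Your explicit handling of the sign in passing from $|m-1|$ to $m-1$ and your remark that the congruence becomes vacuous when $|m-1|=1$ are just slightly more spelled-out versions of the paper's one-line argument.
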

\begin{proof}
According to Lemma \ref{lem:dets}, Equation \ref{eqn:det}, $\det A_j = \pm 1 + l(m-1)$ for some integer $l$. Then, the only way of making $\det A_j$ equal to $0$ is to set $m=2$, $l=\mp 1$ which yields $\det A_j = \pm 1$, mod $m-1$. This occurs for the knot $6_1$ (see Figure \ref{fig:6-1}). Note that it may also not be $0$ (see Figure \ref{fig:8-7-m-p}).
\end{proof}

\begin{cor}\label{cor:integer-roots-Alexpoly}
The only integer root of any reduced Alexander polynomial of a knot $($i.e., $1$-component link$)$ is $2$.
\end{cor}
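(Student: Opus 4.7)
The plan is to apply the machinery of Lemma \ref{lem:dets} in the integral setting $n = 0$ flagged in Definition \ref{def:palettegraph}. Suppose $m \in \mathbf{Z}$ satisfies $\Delta^0_K(m) = 0$. Then the coloring matrix of any diagram of $K$ obtained by evaluating $T$ at $m$ is an integer matrix whose first minor has determinant $\pm \Delta^0_K(m) = 0$; its $\mathbf{Z}$-kernel therefore has rank at least two, and a non-trivial integer $(0, m)$-coloring of $K$ exists alongside the trivial one.

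Given such a coloring, I would form its $(0, m)$-palette graph $G$, which is connected by Lemma \ref{lem:palettegraphknotsconnected} (whose proof uses only the one-component topology of $K$, independently of the modulus), pick a spanning tree, and assemble its $(0, m)$-adjacency matrix $A$. Let $A_j$ be any column-deleted minor. Two facts then coexist: on the one hand, arguing as in Lemma \ref{lem:dets}(1) but now over $\mathbf{Z}$, the two $\mathbf{Z}$-linearly independent solutions (trivial and non-trivial colorings) force $\det A_j = 0$ in $\mathbf{Z}$; on the other hand, Lemma \ref{lem:dets}(2) transfers verbatim, since its proof only reduces the entries of $A_j$ modulo $|m-1|$ and invokes the bijection of Claim \ref{cl:cl}, yielding $\det A_j \equiv \pm 1 \pmod{|m-1|}$.

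Combining these gives $0 \equiv \pm 1 \pmod{|m-1|}$, so $|m-1|$ divides $1$, hence $|m-1| = 1$ and $m \in \{0, 2\}$. The residual possibility $m = 1$, i.e.\ $|m-1| = 0$, is ruled out immediately, since then the congruence would read $\det A_j = \pm 1$, contradicting $\det A_j = 0$. The value $m = 0$ is excluded by the normalization built into the definition of $\Delta^0_K$, which requires $\Delta^0_K(0) > 0$. The only remaining option is $m = 2$, which is indeed attained, for instance by the knot $6_1$ (Figure \ref{fig:6-1}).

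The one point requiring care is checking that Lemma \ref{lem:dets} and its supporting apparatus transfer cleanly to the $p = 0$ setting; this should be purely a bookkeeping matter, since neither the palette-graph construction nor the spanning-tree determinant computation uses anything specific about working modulo a prime, only that $A$ has integer entries and a two-dimensional kernel.
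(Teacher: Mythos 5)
Your proposal is correct and follows essentially the same route as the paper: produce a non-trivial integral coloring from the vanishing of the first minor determinant, pass to the palette graph and the adjacency matrix of a spanning tree, and play $\det A_j = 0$ against $\det A_j \equiv \pm 1 \pmod{|m-1|}$ from Lemma \ref{lem:dets}. Your explicit elimination of the residual cases $m=0$ (via the normalization $\Delta^0_K(0)>0$) and $m=1$ is a welcome bit of extra care that the paper's own proof passes over silently.
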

\begin{proof}
Suppose $m$ is an integer  root of the reduced Alexander polynomial of $K$ i.e., $\Delta^0_K(m)=0$. Note that $\Delta^0_K(m)$ is the minor determinant of the coloring matrix of $K$, with coloring condition $c=ma+(1-m)b \Leftrightarrow ma + (1-m)b - c = 0$. This coloring matrix is an integer matrix and along each row there is exactly one $m$, one $1-m$ and one $-1$, and otherwise $0$'s. Hence the  determinant of the coloring matrix is $0$. Its Smith Normal Form  has therefore at least two $0$'s along the diagonal; one because the coloring matrix it stems from has $0$ determinant, and the other one because its first minor  determinant is $0$, $\Delta^0_K(m)=0$ ($m$ is a root). There is thus at least one non-trivial $(p, m)$-coloring of the diagram, for any odd prime $p$ i.e., a non-trivial integral coloring. Consider the palette graph of this non-trivial coloring, consider a spanning forest of this palette graph and obtain its $m$-adjacency matrix, $A$. Then, with the notation of Lemma \ref{lem:dets}, $$0 = \det A_j \qquad \text{ and } \qquad \det A_j = \pm 1 \quad (\text{mod }|m-1|).$$ But this  is true if and only if $m=2$. The proof is complete.
\end{proof}

\bigbreak

We note that the reduced Alexander polynomial of the Hopf link equals $0$ at $1$.

\bigbreak

We are now ready to prove Theorem \ref{thm:satohmin}. We state it here again for the reader's convenience. We remark again that we exclude from our considerations knots or links whose Alexander polynomial is identically $0$.

\begin{theorem}\label{thm:satohminbis}
Let $K$ be a knot i.e., a $1$-component link and $p$ be an odd prime. Let $m$ be an integer such that $K$ admits non-trivial $(p, m)$-colorings mod $p$. If $m \neq 2$ $($or $m=2$ but $\Delta^0_K(m)\neq 0 )$ then $$2 + \lfloor \ln_M p \rfloor \leq mincol_{p, m} (K) ,$$ where $M = \max \{ |m|, |m-1|  \}$.
\end{theorem}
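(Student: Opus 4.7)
The plan is to choose a diagram $D$ of $K$ realizing a non-trivial $(p,m)$-coloring with exactly $k := \mathrm{mincol}_{p,m}(K)$ distinct colors, form its $(p,m)$-palette graph $G$, and squeeze $k$ between an upper bound on a determinant coming from the shape of the $(p,m)$-adjacency matrix of a spanning tree and a divisibility lower bound coming from the fact that the coloring is non-trivial mod $p$.

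Since $K$ is a $1$-component link, Lemma \ref{lem:palettegraphknotsconnected} guarantees that $G$ is connected, so it admits a spanning tree $T$ with exactly $k-1$ edges. Let $A$ be the $(p,m)$-adjacency matrix of $T$ from Definition \ref{def:madjacency matrix}, a $(k-1)\times k$ matrix, and let $A_j$ be the $(k-1)\times(k-1)$ submatrix obtained by deleting any one column. By construction every row of $A_j$ contains at most one $-m$, at most one $m-1$, and at most one $1$ (the rest zeros), so $A_j \in \mathrm{Mat}_{k-1}^{m}$. Lemma \ref{lem:MatNm} then provides the upper bound
$$|\det A_j| \leq M^{k-1}, \qquad M = \max\{|m|, |m-1|\}.$$

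For the matching lower bound I will argue that $\det A_j$ is a nonzero integer divisible by $p$. Lemma \ref{lem:dets}(1) says $\det A_j$ is either $0$ or a multiple of $p$; the hypothesis of the theorem is used to exclude the zero case. When $m\neq 2$, Lemma \ref{lem:dets}(2) gives $\det A_j\equiv \pm 1 \pmod{|m-1|}$ with $|m-1|\geq 2$, so $\det A_j\neq 0$ automatically. When $m=2$ but $\Delta^0_K(2)\neq 0$, I argue by contraposition: if $\det A_j=0$ over $\mathbf{Z}$, then the rank of $A$ over $\mathbf{Q}$ is at most $k-2$, so in addition to the trivial kernel vector $(1,\dots,1)^{T}$ there is an independent integer kernel vector, producing a non-trivial \emph{integral} coloring of $D$ via the palette correspondence (Lemma \ref{lem:consequence} ensuring every crossing equation is implied by the spanning-tree equations); exactly as in the proof of Corollary \ref{cor:integer-roots-Alexpoly} this forces $m=2$ to be a root of $\Delta^0_K$, contradicting $\Delta^0_K(2)\neq 0$. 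In either case $\det A_j\neq 0$ and $p\mid \det A_j$, hence $p\leq |\det A_j|\leq M^{k-1}$.

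Taking $\log_M$ yields $\log_M p \leq k-1$. Since $p$ is prime and $M$ is an integer with $M\neq p$ (for the $m$'s relevant to $(p,m)$-colorability one has $1<m<p$, so $M=m\leq p-1$), the number $\log_M p$ is irrational, hence strictly larger than $\lfloor \log_M p\rfloor$. Because $k-1$ is an integer, this strict inequality bumps up to $k-1\geq \lfloor \log_M p\rfloor + 1$, i.e.\ $k\geq 2+\lfloor \log_M p\rfloor$, which is the claimed bound. The step I expect to require most care is the $m=2$ branch of the non-vanishing argument, since it is where one has to translate a local statement about the chosen coloring (the determinant of a spanning-tree submatrix) into a global statement about the reduced Alexander polynomial of $K$; the remaining steps are essentially immediate consequences of Lemmas \ref{lem:MatNm} and \ref{lem:dets}.
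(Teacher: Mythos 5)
Your proposal is correct and follows essentially the same route as the paper: bound $|\det A_j|$ above by $M^{k-1}$ via Lemma \ref{lem:MatNm}, below by $p$ via the divisibility and non-vanishing statements of Lemma \ref{lem:dets}, and take logarithms. In fact you supply two details the paper's two-line proof leaves implicit — why the hypothesis ``$m\neq 2$ or $\Delta^0_K(2)\neq 0$'' rules out $\det A_j=0$, and why $\log_M p$ is never an integer so that the floor can be bumped up by one — so nothing further is needed.
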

\begin{proof}
According to Lemmas \ref{lem:MatNm} and  \ref{lem:dets}
\[
p \leq \det A_j \leq M^{N-1}
\]
Removing $\det A_j$ from the inequalities and taking logarithms base $M$ yields the result.
\end{proof}

{\bf Remark} Care should be taken when choosing the representative $m$ for the formula $2 + \lfloor \ln_M p \rfloor $. Let us consider the case of Fox colorings while keeping the notation from Theorem \ref{thm:satohminbis}. Here either $m=-1$ or $m=p-1$. Since $N$ represents the number of colors in the diagram, and we know that it takes at least $3$ colors to obtain a non-trivial coloring (Proposition \ref{prop:min3}), then the relation $p \leq (p-1)^{N-1}$ is a trivial relation (for each  $N>2$, which are the relevant $N$'s here). On the other hand, the relation $p \leq (1-(-1))^{N-1}$ is not a trivial relation.

\section{Illustrative examples.}\label{sect:examples}

In this Section we present other examples and the associated calculations. Each figure portrays a knot diagram equipped with a non-trivial $(p, m)$-coloring along with the corresponding palette graph and spanning tree. We recall that the boxed pair ``prime, integer'' at the top left of each figure, stands for the $(n, m)$ parameters of the linear Alexander quandle being used as target quandle in the figure. Later (Section \ref{sect:reducing}) we will try to reduce the number of the colors in these colorings. If the number of colors we are left with at the end  equals the lowest bound dictated by Theorem \ref{thm:satohmin} or Proposition \ref{prop:min3} or Proposition \ref{prop:conditionformin4}- we know we reached the corresponding $mincol_{p, m} (K)$, for the knot $K$ at stake.

In general, each figure intends to represent a different feature within the non-trivial $(p, m)$-colorings. For instance, Figure \ref{fig:6-1}, for knot $6_1$, with $m=2$, the determinant of the coloring matrix is $0$, which means that the diagram in the Figure is colored integrally (the coloring conditions at each crossing are satisfied over the integers). This phenomenon does not occur for knots (one component) and Fox colorings, since, for knots (one component) the Alexander polynomial evaluated at $-1$ is always an odd integer \cite{CFox}. Also, Theorem \ref{thm:satohmin} does not apply here.

In Figure \ref{fig:8-7-m-p} a non-trivial $(23, 2)$-coloring of the knot $8_7$ is found. In this case, Theorem \ref{thm:satohmin} provides an estimate for the lower bound of the number of colors: $2 + \lfloor \ln_2 23 \rfloor = 6$. Since we are using eight colors in this coloring we may state $mincol_{23, 2} (8_7) \leq 8$.

In Figure \ref{fig:6-2-101-4} there is a non-trivial $(101, 4)$-coloring of knot $6_2$. Here again Theorem \ref{thm:satohmin} provides an estimate for the lower bound of the number of colors: $2 + \lfloor \ln_4 101 \rfloor = 5$. Since we are using six colors in this coloring we may state $mincol_{101, 4} (6_2) \leq 6$.

We remark that in Figures \ref{fig:8-7-m-p} and \ref{fig:6-2-101-4} we have alternating reduced diagrams equipped with colorings associated to a prime determinant. Moreover, these colorings are such that distinct arcs receive distinct colors. In Subsection \ref{subsect:anti-KH behavior} we present examples of alternating reduced diagrams equipped with colorings associated to a prime determinant but such that different arcs receive the same color - we call this the anti KH behavior; we elaborate further below. This does not occur for Fox colorings.

\begin{figure}[!ht]
	\psfrag{0}{\huge$0$}
	\psfrag{1}{\huge$1$}
	\psfrag{2}{\huge$2$}
	\psfrag{3}{\huge$3$}
	\psfrag{4}{\huge$4$}
	\psfrag{5}{\huge$5$}
	\psfrag{a}{\huge$a$}
	\psfrag{b}{\huge$b$}
	\psfrag{2a-b}{\huge$c=2a-b$}
	\psfrag{6-1}{\huge$\mathbf{6_1}$}
    \psfrag{0, 2}{\huge$\mathbf{0, 2}$}
	\psfrag{m=2}{\huge$\mathbf{m=2}$}
	\centerline{\scalebox{.3}{\includegraphics{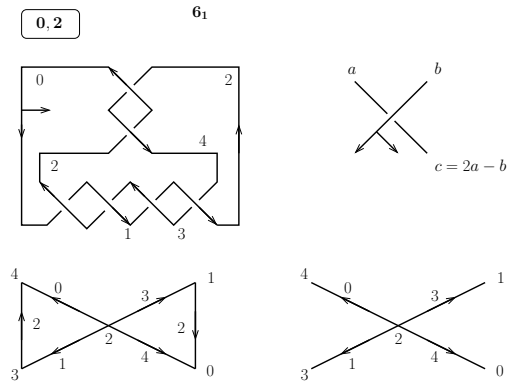}}}
	\caption{The knot $6_1$ whose $m$-determinant is $2m^2-5m+2$. For $m=2$ we obtain $2-\det = 0$. In the top row, a coloring with integers and the coloring condition. Below, the palette graph of the coloring and one of its spanning trees.}\label{fig:6-1}
\end{figure}

\begin{figure}[!ht]
	\psfrag{0}{\huge$0$}
	\psfrag{1}{\huge$1$}
	\psfrag{2}{\huge$2$}
	\psfrag{3}{\huge$3$}
	\psfrag{6}{\huge$6$}
	\psfrag{12}{\huge$12$}
	\psfrag{4}{\huge$4$}
	\psfrag{19}{\huge$19$}
	\psfrag{16}{\huge$16$}
	\psfrag{21}{\huge$21$}
	\psfrag{a}{\huge$a$}
	\psfrag{b}{\huge$b$}
	\psfrag{p=23}{\huge$\mathbf{p=23}$}
	\psfrag{c=2a-b}{\huge$c=2a-b$}
	\psfrag{8-7}{\huge$\mathbf{8_7}$}
	\psfrag{23, 2}{\huge$\mathbf{23, 2}$}
	\psfrag{m=2}{\huge$\mathbf{m=2}$}
	\centerline{\scalebox{.3}{\includegraphics{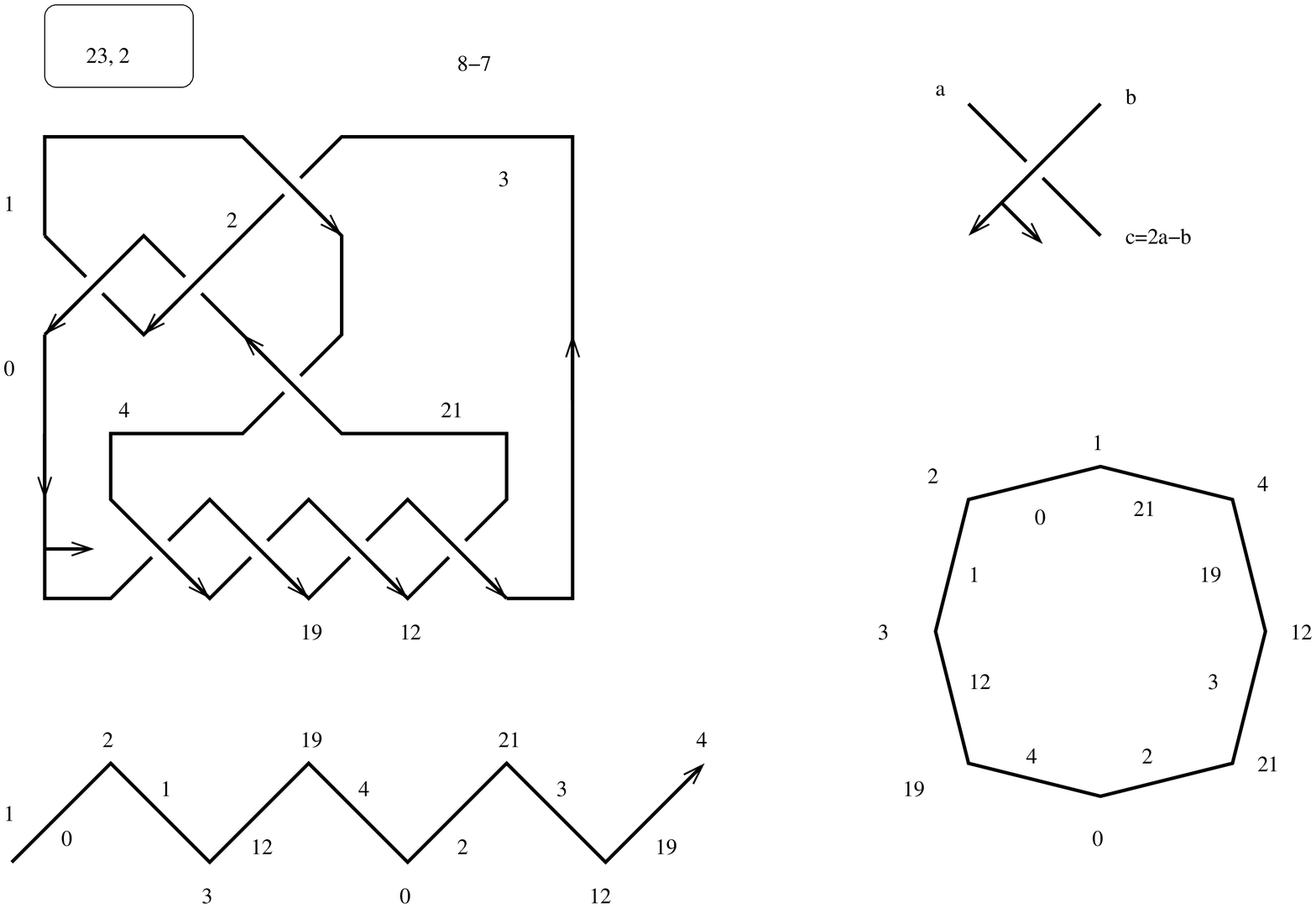}}}
	\caption{The knot $8_7$ whose $2$-determinant is $23$.   The graph depicted is the palette graph. The other graph is a spanning tree of the palette graph. We remark the KH behavior of the coloring.}\label{fig:8-7-m-p}
\end{figure}

\begin{figure}[!ht]
	\psfrag{0}{\huge$0$}
	\psfrag{1}{\huge$1$}
	\psfrag{6-2}{\huge$\mathbf{6_2}$}
	\psfrag{101, 4}{\huge$\mathbf{101, 4}$}
	\psfrag{4}{\huge$4$}
	\psfrag{26}{\huge$26$}
	\psfrag{45}{\huge$45$}
	\psfrag{60}{\huge$60$}
	\psfrag{5}{\huge$5$}
	\psfrag{a}{\huge$a$}
	\psfrag{b}{\huge$b$}
	\psfrag{c=4a-3b}{\huge$c=4a-3b$}
	\psfrag{p=19}{\huge$\quad\mathbf{p=19}$}
	\centerline{\scalebox{.3}{\includegraphics{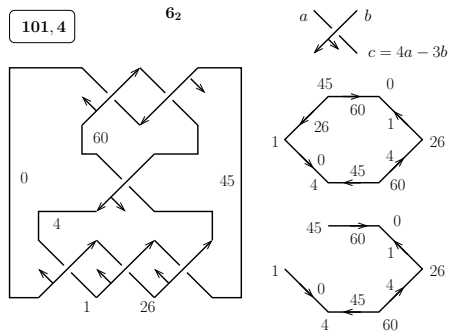}}}
	\caption{Non-trivial colorings for $6_2$; $m=4$ and $p=101$.}\label{fig:6-2-101-4}
\end{figure}

\subsection{Further examples: anti-KH behavior.}\label{subsect:anti-KH behavior}

A conjecture in \cite{HK} has subsequently become a Theorem, proven by Mattman and Solis \cite{msolis}. Here is its statement.

\begin{theorem}
Let $K$ be an alternating knot $($one component$)$ of prime determinant, $p$, i.e., $\Delta^0_K(-1) = p$. Any non-trivial $p$-coloring $($Fox coloring$)$ on a reduced alternating diagram of $K$ assigns different colors to different arcs.
\end{theorem}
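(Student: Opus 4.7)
The plan is to proceed by contradiction, translating the coloring data into linear--algebraic statements on the Tait (checkerboard) graph of the diagram, and exploiting primality of the determinant through the Matrix--Tree theorem.

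Let $D$ be a reduced alternating diagram of $K$, and checkerboard--color the complementary regions of $D$ in the plane in black and white. Let $G_B$ denote the Tait graph whose vertices are the black regions and whose edges are the crossings of $D$. Because $D$ is alternating, the checkerboard can be chosen so that every edge of $G_B$ receives the same Goeritz sign; under this convention the Goeritz matrix $\mathcal{G}$ of $D$ is (up to an overall sign) the combinatorial Laplacian of $G_B$. The Matrix--Tree theorem combined with the alternating--reduced hypothesis then yields $\det K = \tau(G_B) = p$, where $\tau(G_B)$ is the number of spanning trees of $G_B$.

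Next I would rewrite Fox colorings in region form. A non--trivial Fox $p$--coloring of $D$ lifts uniquely, up to additive constants, to a labelling $g\colon V(G_B)\to \mathbf{Z}_p$ of the black regions such that for every arc $\alpha$ of $D$ bordered by the black regions $B$ and $B'$, the color of $\alpha$ equals $g(B)-g(B')\pmod{p}$. Since $\tau(G_B)=p$ is prime, the Smith Normal Form of $\mathcal{G}$ over $\mathbf{Z}$ produces exactly one invariant factor equal to $p$ (the others being $1$) together with a single $0$; consequently the kernel of $\mathcal{G}$ mod $p$ is two--dimensional, spanned by the constants and one further vector, so the non--trivial coloring is unique up to affine scaling.

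Now I would assume, for contradiction, that two distinct arcs $\alpha\neq\beta$ receive the same Fox color. Writing this in region form gives
\[
g(B_\alpha)-g(B_\alpha') \;\equiv\; g(B_\beta)-g(B_\beta') \pmod{p},
\]
which says $g$ lies in the mod--$p$ kernel of an augmented Laplacian $\mathcal{G}'$, namely the Laplacian of the modified graph $G_B'$ obtained from $G_B$ by adjoining an edge between $B_\beta$ and $B_\beta'$ while deleting (or contracting) the edge corresponding to $\alpha$, or an equivalent local surgery encoding the identification. A second application of the Matrix--Tree theorem then forces $p\mid \tau(G_B')$.

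The crux of the argument, and the main obstacle I foresee, is to show that the reduced--alternating hypothesis on $D$ forces $0<\tau(G_B')<p$, which contradicts $p\mid\tau(G_B')$ and thereby completes the proof. Here reducedness of $D$ (no nugatory crossings, i.e.\ $G_B$ is $2$--edge--connected) is what prevents $G_B'$ from degenerating to a graph with no spanning tree or with a spanning--tree count that is a multiple of $p$, while the alternating property keeps the Goeritz form sign--definite so that the spanning--tree count genuinely computes $|\det K|$ rather than a signed combination. Making this final step precise requires a case split according to whether $\alpha$ and $\beta$ are disjoint, share one endpoint crossing, or share both, and in each case using the $2$--edge--connectedness of $G_B$ to produce enough but not too many spanning trees in the contracted/augmented graph $G_B'$. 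This local combinatorial analysis on the Tait graph is where the delicate part of the argument lives; the algebraic setup above is what reduces the knot--theoretic statement to a clean graph--theoretic one.
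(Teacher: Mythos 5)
There is a genuine gap here, and before addressing it I should point out that the paper does not actually prove this statement: it is the Kauffman--Harary conjecture, and the paper simply records it as a theorem of Mattman and Solis, citing \cite{msolis} for the proof. So there is no in-paper argument to compare against; your proposal must stand on its own as a proof, and it does not. The decisive step --- showing that the hypothesis ``two distinct arcs receive the same color'' contradicts primality of the determinant --- is exactly the content of the theorem, and your write-up explicitly defers it (``this is where the delicate part of the argument lives''). Everything before that point (checkerboard surfaces, $\det K = \tau(G_B)$ for a reduced alternating diagram, uniqueness of the non-trivial coloring up to affine change) is standard and does not touch the actual difficulty. An outline that stops at the hard step is not a proof, even when the outline points in the same general direction as the published one (Mattman--Solis do work with the Goeritz matrix of the checkerboard graph, but the heart of their paper is precisely the combinatorial analysis you postpone).

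There are also technical problems in the reduction itself. First, an edge of the underlying $4$-valent graph borders one black and one white region, not two black regions, so the translation ``arc color $= g(B)-g(B')$ for the two black regions bordering the arc'' is not correct as stated; the region (Dehn) reformulation assigns to each arc a combination of one black and one white region color, and passing to the Goeritz matrix requires eliminating the white regions first. Second, the relation $g(B_\alpha)-g(B_\alpha')\equiv g(B_\beta)-g(B_\beta')$ is a single linear condition whose coefficient vector is supported on four vertices with signs $+1,-1,-1,+1$; this is the difference of two edge-incidence rows, not a row of the Laplacian of any graph obtained from $G_B$ by adding or contracting an edge. Consequently the ``second application of the Matrix--Tree theorem'' to an ``augmented Laplacian $\mathcal{G}'$'' does not apply as written --- one would need something like the all-minors matrix-tree theorem or a rank-one update formula $\det(\mathcal{G}+uv^{T})=\det\mathcal{G}+v^{T}\,\mathrm{adj}(\mathcal{G})\,u$, and then a genuine counting argument bounding the resulting quantity strictly between $0$ and $p$. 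Until that is carried out, the proposal establishes nothing beyond the (known) setup.
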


\begin{definition}
We say a knot, $K$, displays KH behavior mod $(p, m)$, if $K$  is alternating, and if there is a prime $p$ and an integer $m$ such that $1<m<p$ with $\Delta^0_K(m) = p$, and for some reduced alternating diagram of $K$ equipped with a non-trivial $(p, m)$-coloring, different arcs receive different colors. Otherwise we say $K$  displays anti-KH behavior.
\end{definition}

Figures \ref{fig:6-1-m-3}, \ref{fig:7-2new}, and \ref{fig:9-12-3-11} present cases of anti-KH behavior, namely reduced alternating diagrams, prime determinant of the associated coloring matrices, but the non-trivial colorings on these diagrams assign the same colors to some distinct arcs.

\begin{figure}[!ht]
	\psfrag{0}{\huge$0$}
	\psfrag{1}{\huge$1$}
	\psfrag{2}{\huge$2$}
	\psfrag{3}{\huge$3$}
	\psfrag{4}{\huge$4$}
	\psfrag{5}{\huge$5$}
	\psfrag{a}{\huge$a$}
	\psfrag{b}{\huge$b$}
	\psfrag{3a-2b}{\huge$c=3a-2b$}
	\psfrag{6-1}{\huge$\mathbf{6_1}$}
	\psfrag{5, 3}{\huge$\mathbf{5, 3}$}
	\centerline{\scalebox{.35}{\includegraphics{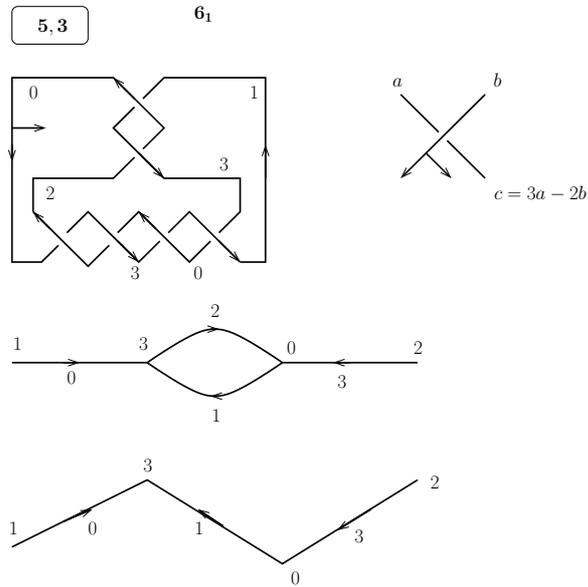}}}
	\caption{The knot $6_1$ whose $m$-determinant is $2m^2-5m+2$. For $m=3$ we obtain $3-\det = 5$. In the top row, a coloring with integers and the coloring condition. Below, the palette graph of the coloring and one of its spanning trees.}\label{fig:6-1-m-3}
\end{figure}

\begin{figure}[!ht]
	\psfrag{0}{\huge$0$}
	\psfrag{1}{\huge$1$}
	\psfrag{2}{\huge$2$}
	\psfrag{3}{\huge$3$}
	\psfrag{4}{\huge$4$}
	\psfrag{5}{\huge$5$}
	\psfrag{a}{\huge$a$}
	\psfrag{b}{\huge$b$}
	\psfrag{2a-b}{\huge$c=2a-b$}
	\psfrag{7-2}{\huge$\mathbf{7_2}$}
	\psfrag{5, 2}{\huge$\mathbf{5, 2}$}
	\psfrag{m=2}{\huge$\mathbf{m=2}$}
	\centerline{\scalebox{.35}{\includegraphics{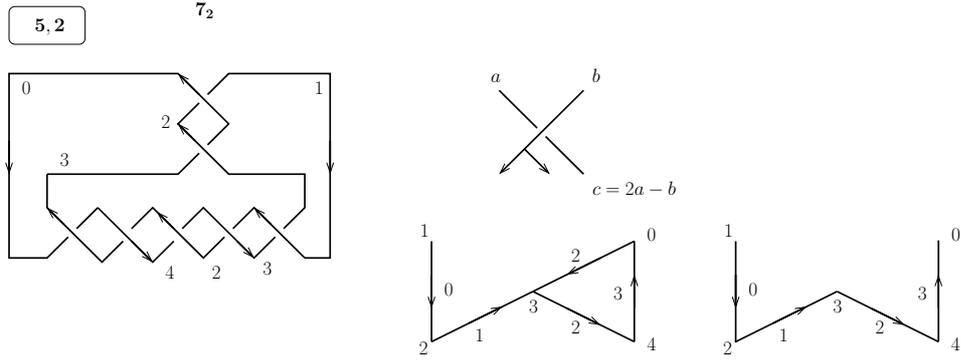}}}
	\caption{The knot $7_2$ whose $m$-determinant is $3m^2-5m+3$. For $m=2$ we obtain $2-\det = 5$. We remark the anti-KH behavior of this coloring.}\label{fig:7-2new}
\end{figure}

\begin{figure}[!ht]
	\psfrag{0}{\huge$0$}
	\psfrag{1}{\huge$1$}
	\psfrag{2}{\huge$2$}
	\psfrag{3}{\huge$3$}
	\psfrag{4}{\huge$4$}
	\psfrag{5}{\huge$5$}
	\psfrag{6}{\huge$6$}
	\psfrag{10}{\huge$10$}
	\psfrag{9}{\huge$9$}
	\psfrag{a}{\huge$a$}
	\psfrag{b}{\huge$b$}
	\psfrag{c}{\huge$c=3a-2b$}
	\psfrag{9-12}{\huge$\mathbf{9_{12}}$}
	\psfrag{3-11}{\huge$3; 11$}
	\psfrag{6-3}{\huge$\mathbf{6_3}$}
	\psfrag{11, 3}{\huge$\mathbf{11, 3}$}
	\psfrag{m=2}{\huge$\mathbf{m=2}$}
	\centerline{\scalebox{.35}{\includegraphics{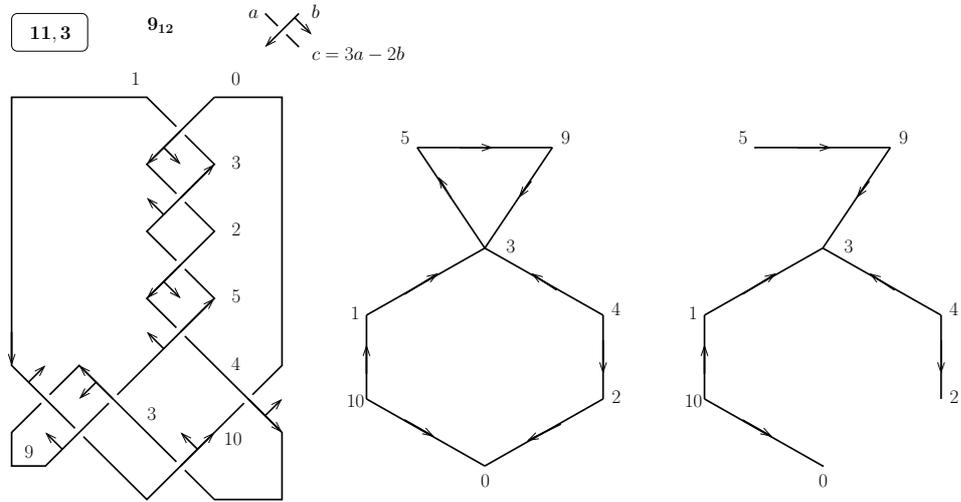}}}
	\caption{The knot $9_12$ whose $m$-determinant is $2m^4 - 9m^3 + 13m^2 - 9m + 2$. For $m=3$ we obtain $3-\det = 11$.  We note the anti-KH behavior: minimal alternating diagram, prime determinant but repeated colors (3).}\label{fig:9-12-3-11}
\end{figure}

\section{Equivalence classes of colorings by linear Alexander quandles}\label{sect:autocolorings}

We recall here the definition of Linear Alexander Quandle also to introduce notation which will shorten the statements in the sequel.

\begin{definition}[Linear Alexander Quandles]\label{def:LAQ}
Let $p$ be an odd prime and $m$ a positive integer such that $1<m<p$. We let $\mathbf{LAQ(p, m)}$ stand for the linear Alexander quandle of order $p$ and parameter $m$ i.e., the quandle whose underlying set is the integers modulo $p$, $\mathbf{Z}_p$, and whose quandle operation is, for any $a, b\in \mathbf{Z}_p$, $$a\ast b := ma + (1-m)b \quad \text{ mod } p.$$ We remark that for $m=p-1$ we obtain the dihedral quandle of order $p$.
\end{definition}

\begin{definition}[Automorphism groups of Linear Alexander Quandles]\label{def:Aut(LAQ)}
Let $p$ be an odd prime and $m$ a positive integer such that $1<m<p$. We let $\mathbf{Aut(p, m)}$ stand for the group of automorphisms of the linear Alexander quandle of order $p$ and parameter $m$, $\mathbf{LAQ(p, m)}$. This is the set of bijections $f : \mathbf{Z}_p \longrightarrow \mathbf{Z}_p$ such that, for any $a, b\in \mathbf{Z}_p$, $f(ma+(1-m)b) = mf(a)+(1-m)f(b)$; the group operation is composition of functions.
\end{definition}

\begin{theorem}[\cite{Nelson, XdHou, elhamdadi}]\label{thm:auto}
Let $p$ be an odd prime and $m$ a positive integer such that $1<m<p$. Then $$\mathbf{Aut(p, m)} \cong \mathbf{Aff(p)} \cong \mathbf{Z}_p \rtimes \mathbf{Z}_p^\times ,$$ where $\mathbf{Aff(p)} \big( \cong \mathbf{Z}_p \rtimes \mathbf{Z}_p^\times \big) $ is the affine group over $\mathbf{Z}_p$, \cite{Rotman}. In particular, $\mathbf{Aut(p, m)}$ only depends on $p$.
\end{theorem}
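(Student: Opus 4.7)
The plan is to show directly that the automorphism group of $\mathbf{LAQ}(p,m)$ consists precisely of the affine maps $x\mapsto \alpha x+\beta$ with $\alpha\in\mathbf{Z}_p^\times$ and $\beta\in\mathbf{Z}_p$, and then recognize this set under composition as $\mathbf{Aff}(p)\cong\mathbf{Z}_p\rtimes\mathbf{Z}_p^\times$. The first direction is a direct verification: for $f(x)=\alpha x+\beta$ one computes
\[
f(ma+(1-m)b)=\alpha ma+\alpha(1-m)b+\beta=m(\alpha a+\beta)+(1-m)(\alpha b+\beta)=mf(a)+(1-m)f(b),
\]
and $f$ is a bijection precisely when $\alpha\in\mathbf{Z}_p^\times$.

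The substantive direction is the converse. Given $f\in\mathbf{Aut}(p,m)$, set $\beta:=f(0)$ and $g(x):=f(x)-\beta$, so that $g(0)=0$. I would first check that $g$ still satisfies the automorphism identity $g(ma+(1-m)b)=mg(a)+(1-m)g(b)$, since subtracting the constant $\beta$ from $f$ shifts both sides by $\beta-(m+(1-m))\beta=0$. Specializing $b=0$ gives $g(ma)=mg(a)$, and $a=0$ gives $g((1-m)b)=(1-m)g(b)$. The central observation is that because $1<m<p$, both $m$ and $1-m$ are units in $\mathbf{Z}_p$, so $m\mathbf{Z}_p=(1-m)\mathbf{Z}_p=\mathbf{Z}_p$; hence the automorphism identity rewrites as $g(u+v)=g(u)+g(v)$ for arbitrary $u,v\in\mathbf{Z}_p$. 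An additive self-map of the cyclic group $\mathbf{Z}_p$ is determined by $g(1)=:\alpha$, giving $g(x)=\alpha x$ and $f(x)=\alpha x+\beta$; bijectivity forces $\alpha\in\mathbf{Z}_p^\times$.

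It remains to identify the group structure. Composition gives
\[
(\alpha_1 x+\beta_1)\circ(\alpha_2 x+\beta_2)=\alpha_1\alpha_2\, x+(\alpha_1\beta_2+\beta_1),
\]
which is exactly the multiplication in the semidirect product $\mathbf{Z}_p\rtimes\mathbf{Z}_p^\times$ under the map $(\beta,\alpha)\mapsto (x\mapsto \alpha x+\beta)$, where $\mathbf{Z}_p^\times$ acts on $\mathbf{Z}_p$ by multiplication. This furnishes the stated isomorphism $\mathbf{Aut}(p,m)\cong\mathbf{Aff}(p)\cong\mathbf{Z}_p\rtimes\mathbf{Z}_p^\times$, and since the right-hand side makes no mention of $m$, the last assertion follows.

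The only mildly delicate step is the passage from the two specializations $g(ma)=mg(a)$, $g((1-m)b)=(1-m)g(b)$ to full additivity; this is where the hypothesis $1<m<p$ is essential, because it ensures both $m\neq 0$ and $m\neq 1$ in $\mathbf{Z}_p$, so that the two orbits $m\mathbf{Z}_p$ and $(1-m)\mathbf{Z}_p$ each exhaust $\mathbf{Z}_p$. Everything else is bookkeeping.
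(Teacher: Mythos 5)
Your proof is correct, and while it shares the paper's overall skeleton---verify that affine maps are automorphisms, then normalize an arbitrary automorphism $f$ by setting $g(x)=f(x)-f(0)$ and specialize to get $g(ma)=mg(a)$ and $g((1-m)b)=(1-m)g(b)$---it diverges at the crucial step in a genuinely better way. The paper gets from those two scaling identities to linearity by a fairly heavy route: it iterates them to $g(m^k a)=m^k g(a)$ and $g((1-m)^k b)=(1-m)^k g(b)$, invokes the orders of $m$ and $1-m$ in $\mathbf{Z}_p^\times$ to handle inverses, and then proves two separate inductive claims ($g(mkx)=mkg(x)$ and $g((mk+l)x)=(mk+l)g(x)$) before concluding $g(\lambda x)=\lambda g(x)$. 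You instead observe that since $m$ and $1-m$ are both units (this is exactly where $1<m<p$ enters), every pair $u,v\in\mathbf{Z}_p$ can be written as $u=ma$, $v=(1-m)b$, whence the quandle-homomorphism identity becomes $g(u+v)=mg(a)+(1-m)g(b)=g(ma)+g((1-m)b)=g(u)+g(v)$; additivity on the cyclic group $\mathbf{Z}_p$ then forces $g(x)=g(1)\,x$ with no induction at all. This is shorter, avoids the order-of-an-element detour entirely, and isolates the single hypothesis doing the work. The identification of the composition law with the semidirect product structure and the observation that the answer is independent of $m$ are handled the same way in both arguments.
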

\begin{proof}
This proof is contained in the proof of Theorem $2.1$ in \cite{Nelson}, see also \cite{XdHou}, and \cite{elhamdadi} for the case $m=p-1$. Here we provide a direct calculation in the spirit of \cite{elhamdadi}.

We start by proving that $\mathbf{Aff(p)} \subset \mathbf{Aut(p, m)} $. For $\lambda \in \mathbf{Z}_p^\times$ and $\mu\in \mathbf{Z}_p$, consider $$ f_{\lambda,\mu} (x) = \lambda x + \mu \qquad \text{ for each } \quad x\in \mathbf{Z}_p.$$ For any $a, b \in \mathbf{Z}_p$,
\begin{align*}
f_{\lambda, \mu}(ma+(1-m)b) & = \lambda (ma+(1-m)b) + \mu = m(\lambda a + \mu ) + (1-m)(\lambda b + \mu ) = \\
& =  m f_{\lambda, \mu}(a) + (1-m)f_{\lambda, \mu}(b) .
\end{align*}

We now prove $\mathbf{Aut(p, m)} \subset \mathbf{Aff(p)}$. We pick $f\in \mathbf{Aut(p, m)}$ i.e. such that for any $a, b\in \mathbf{Z}_p$, $$f(ma+(1-m)b) = mf(a)+(1-m)f(b) .$$ Then, for any $x\in \mathbf{Z}_p$, set $$g(x) = f(x) - f(0) .$$ Then $g(0)=f(0)-f(0)=0$. Moreover,
\begin{align*}
&g(ma+(1-m)b) = f(ma+(1-m)b) - f(0) =  mf(a)+(1-m)f(b) - f(0) = \\
&= m(f(a)-f(0))+(1-m)(f(b) - f(0)) = mg(a)+(1-m)g(b)
\end{align*}
In particular, setting $b=0$, we obtain $$g(ma) = mg(a) ;$$ whereas setting $a=0$, we obtain $$g((1-m)b) = (1-m)g(b).$$ It follows that for any positive integer $k$, $g(m^ka) = m^kg(a)$ and $g((1-m)^kb) = (1-m)^kg(b)$. In particular, there are positive integers $k_1, k_2$ such that $m^{k_1}=_p 1 =_p (1-m)^{k_2}$. Thus, for any $x\in \mathbf{Z}_p$, $$ g(m^{-1}x) =_p g(m^{k_1-1}x) =_p m^{k_1-1}g(x) =_p m^{-1}g(x)$$ $$g((1-m)^{-1}x) =_p g((1-m)^{k_2-1}x) =_p (1-m)^{k_2-1}g(x) =_p (1-m)^{-1}g(x) .$$
\begin{cl}\label{cl:1}
Let $x\in \mathbf{Z}_p$. For any  $k\in \mathbf{Z}$, $$g(mkx)=mkg(x).$$
\end{cl}
\begin{proof}

The proof is clear for $k=0$. We now prove for  positive  $k$, by induction.

For $k=1$ we already know $g(mx)=mg(x)$. Now for the inductive step.
\begin{align*}
&g(m(k+1)x) = g(mkx+mx) = g(mkx+(1-m)(1-m)^{-1}mx) = mg(kx)+(1-m)g((1-m)^{-1}mx) = \\
&= g(mkx)+(1-m)(1-m)^{-1}g(mx) = mkg(x)+mg(x) = m(k+1)g(x)
\end{align*}
The proof is complete for non-negative $k$. For negative $k$ we write $kx = (-k)(-x)$ and invoke the first part of the proof. The proof is complete.
\end{proof}

\begin{cl}
Let $x\in \mathbf{Z}_p$, let $k, l\in \mathbf{Z}$.  Then, $$g((mk+l)x) = (mk+l)g(x) .$$
\end{cl}
\begin{proof}
 \begin{align*}
&g((mk+l)x) = g(m(kx)+lx) = g(m(kx)+(1-m)((1-m)^{-1}lx)) = \\
&= mg(kx)+(1-m)g((1-m)^{-1}(lx)) = mg(kx)+(1-m)(1-m)^{-1}g(lx) = \\
&= mkg(x)+lg(x) = (mk+l)g(x) .
\end{align*}
The proof is complete.
\end{proof}

So for any $\lambda_0\in \mathbf{Z}_p^\times$, pick positive integers $k, l$ such that $\lambda_0 = mk+l$ and $0\leq l < m$. Then $$g(\lambda_0 x) = g((mk+l)x) = (mk+l)g(x) = \lambda_0 g(x)$$ so that $g$ is linear i.e., there is $\lambda\in \mathbf{Z}_p^\times$ such that $g(x)=\lambda x$, for any $x\in \mathbf{Z}_p$. Thus $$f(x)=g(x)+f(0)=\lambda x + f(0) = \lambda x + \mu \qquad \text{ for some } \quad \mu \in \mathbf{Z}_p$$ which completes the proof of Theorem \ref{thm:auto}.

\end{proof}

\begin{theorem}
The equivalence classes of colorings are preserved by colored Reidemeister moves. This means the following. For some odd prime $p$ and integer $m$ such that $1<m<p$, consider a knot/link diagram admitting nontrivial  $(p, m)$-colorings  and equipped with such a non-trivial coloring, call it ${\cal C}$. Assume a Reideimeister move is performed on this colored diagram and the colors of the resulting diagram,  outside the neighborhood where the transformation took place, remain the same. The colors inside this neighborhood are changed in a unique way so that a non-trivial $(p, m)$-coloring is obtained in the resulting diagram, call it ${\cal C'}$. $($This is a colored Reidemeister move, see Figure \ref{fig:Reidemeister+quandle}, and \cite{pLopes}.$)$ Now suppose $f\in \mathbf{Aut(p, m)}$ and consider the new coloring on the original diagram given by $f({\cal C})$. Then the same colored Reidemeister move applied to  $f({\cal C})$ will give rise to $f({\cal C'})$.
\end{theorem}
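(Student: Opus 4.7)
The plan is to leverage the defining property of $f\in\mathbf{Aut}(p,m)$---namely $f(ma+(1-m)b)=mf(a)+(1-m)f(b)$, i.e.\ $f(a\ast b)=f(a)\ast f(b)$ for all $a,b\in\mathbf{Z}_p$---together with the fact that the colored Reidemeister moves (Figure \ref{fig:Reidemeister+quandle}) determine the new colors locally and uniquely via applications of the quandle operations $\ast$ and $\bar{\ast}$ to the boundary-arc colors of the neighborhood where the move takes place.

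First I would verify that $f(\mathcal{C})$ is itself a non-trivial $(p,m)$-coloring of the original diagram. Validity is immediate: at each crossing, the relation $c=a\ast b$ satisfied by $\mathcal{C}$ gives $f(c)=f(a\ast b)=f(a)\ast f(b)$, which is exactly the relation required of $f(\mathcal{C})$. Non-triviality follows from $f$ being a bijection on $\mathbf{Z}_p$, so two distinct colors in $\mathcal{C}$ map to two distinct colors. I would also note that, since $\bar\ast$ is defined as the unique operation satisfying $(a\ast b)\bar\ast b=a$ (Definition \ref{def:quandle}) and $f$ is a bijective quandle homomorphism, $f$ automatically satisfies $f(a\bar\ast b)=f(a)\bar\ast f(b)$.

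The core of the argument is then a case-by-case analysis of the three Reidemeister moves. In each case, the move replaces a small tangle by another while keeping the boundary arcs (those entering or leaving the disc of the move) fixed; the colors of the new interior arcs in $\mathcal{C}'$ are uniquely determined as specific quandle expressions in the boundary colors of $\mathcal{C}$. For instance, a type I move forces the new interior arc to have the same color as the boundary arc via $a\ast a=a$; a type II move introducing a bigon assigns $a\ast b$ or $a\bar\ast b$ to the new internal arc; a type III move involves iterated $\ast$-expressions whose consistency is guaranteed by the third quandle axiom $(a\ast b)\ast c=(a\ast c)\ast (b\ast c)$. Call the colored move applied to $f(\mathcal{C})$ the coloring $\mathcal{D}$. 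Outside the neighborhood, $\mathcal{D}$ agrees with $f(\mathcal{C})$, which agrees with $f(\mathcal{C}')$. Inside, $\mathcal{D}$'s colors are the same quandle expressions, but evaluated at $f$ of the original boundary colors; by the automorphism property applied iteratively, these equal $f$ of the corresponding expressions in the original boundary colors, which are exactly the interior colors of $\mathcal{C}'$. Hence $\mathcal{D}=f(\mathcal{C}')$ arc by arc.

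The main obstacle is not conceptual but organizational: tracking all intermediate arcs through the type III move requires care, as does ensuring that the orientation conventions of Definition \ref{def:fox-n-m} (which distinguish the under-arc receiving the product) are respected on both sides of the move when $m\neq -1$. However, since $\mathbf{LAQ}(p,m)$ satisfies all three quandle axioms and $f$ preserves both $\ast$ and $\bar\ast$, no genuinely new identity is needed beyond sliding $f$ inside each quandle expression, so the verification reduces to a finite check in each of the (finitely many oriented variants of the) three moves.
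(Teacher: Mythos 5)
Your proposal is correct and follows essentially the same route as the paper: the paper also reduces the statement to the observation that the uniquely determined interior colors are quandle expressions in the boundary colors and that $f$, being a quandle automorphism, commutes with those expressions, illustrating this explicitly only for the type II move (Figure \ref{fig:r2}). Your write-up merely spells out the general scheme (including the preservation of $\bar\ast$) that the paper leaves as ``an easy verification.''
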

\begin{proof}
The proof is an easy verification using the quandle properties. We illustrate the procedure by showing how it works for the type II Reidemeister move in Figure \ref{fig:r2}. Check \cite{GJKL} for the case of dihedral quandles.
\begin{figure}[!ht]
	\psfrag{D}{\huge$D$}
	\psfrag{D'}{\huge$D'$}
	\psfrag{calCr=x}{\huge${\cal C}(r)=x$}
	\psfrag{calCs=y}{\huge${\cal C}(s)=y$}
	\psfrag{fcalCr=x}{\huge$g\big( {\cal C}(r)\big)=g(x)$}
	\psfrag{fcalCs=y}{\huge$g\big( {\cal C}(s)\big)=g(y)$}
	\psfrag{calC'r1=x}{\huge${\cal C'}(r'_1)=x \qquad {\cal C'}(s')=y$}
	\psfrag{calC'r2=2y-x}{\huge${\cal C'}(r'_2)=  mx+(1-m)y$}
	\psfrag{calC'r3=x}{\huge${\cal C'}(r'_3)=  m^{-1}(mx+(1-m)y)+(1-m^{-1})y = x$}
	\psfrag{fcalC'r1=x}{\huge$g\big( {\cal C'}(r'_1)\big)=g(x) \qquad g\big( {\cal C'}(s')\big)=g(y)$}
	\psfrag{fcalC'r2=2y-x}{\huge${g\big( \cal C'}(r'_2)\big)= mg(x)+(1-m)g(y)$}
	\psfrag{fcalC'r3=x}{\huge${g\big( \cal C'}(r'_3)\big)= g(m^{-1}(mx+(1-m)y)+(1-m^{-1})y) = g(x)$}
	\psfrag{r}{\huge$r$}
	\psfrag{s}{\huge$s$}
	\psfrag{s'}{\huge$s'$}
	\psfrag{r1}{\huge$r'_1$}
	\psfrag{r2}{\huge$r'_2$}
	\psfrag{r3}{\huge$r'_3$}
	\psfrag{tilde}{\huge$\sim$}
	\psfrag{9}{\huge$\mathbf{9}$}
	\centerline{\scalebox{.35}{\includegraphics{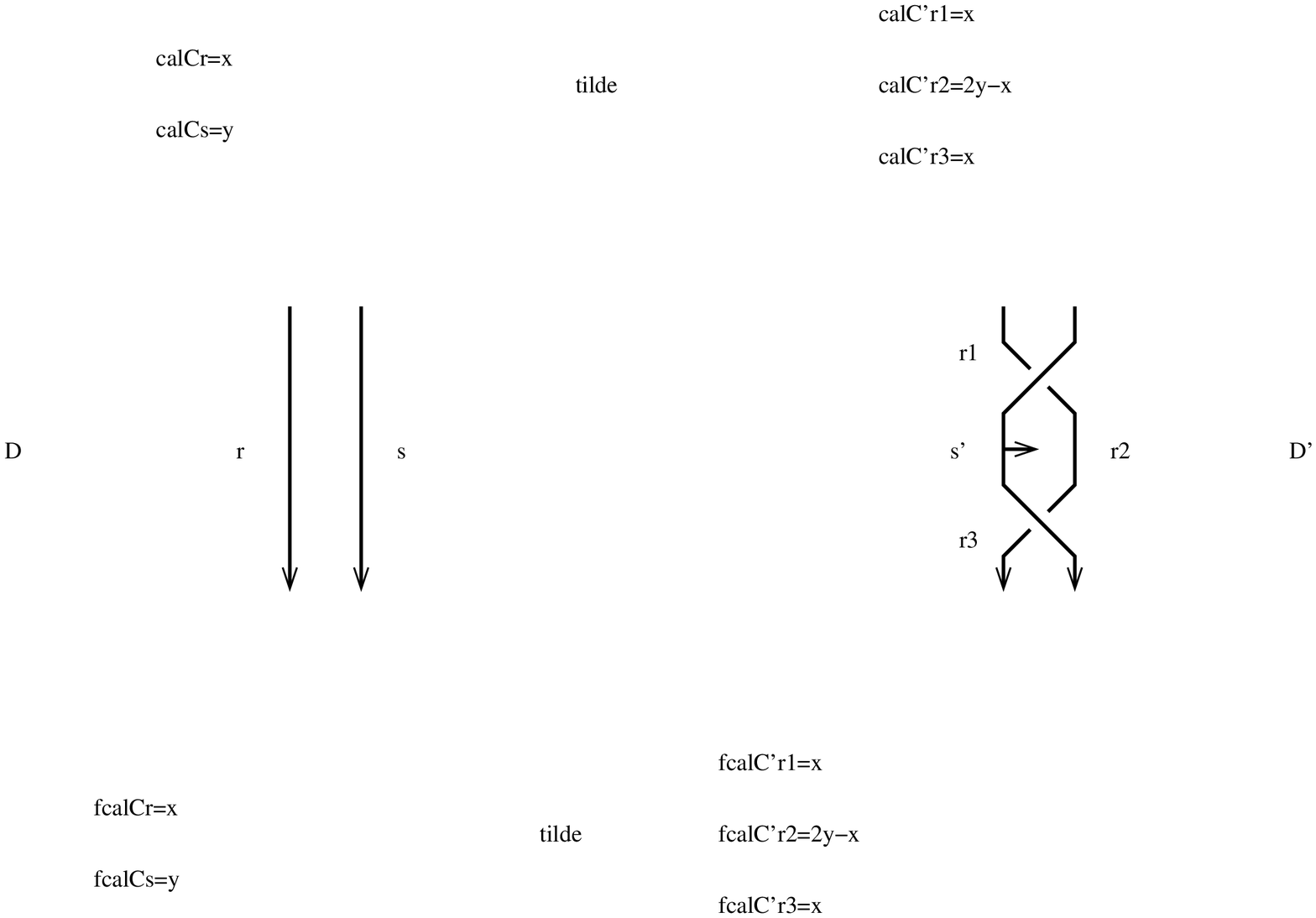}}}
	\caption{Behavior of equivalence classes under type $II$ Reidemeister move for a diagram endowed with a non-trivial $(p, m)$-coloring. $D$ and $D'$ are two knot diagrams related by a type $II$ Reidemeister move as depicted. ${\cal C}$ stands for a non-trivial $(p, m)$-coloring on $D$ i.e., a map from the set of arcs of the diagram into $\mathbf{Z}/p\mathbf{Z}$, satisfying the $(p, m)$-coloring condition at each crossing of the diagram (the same for ${\cal C'}$ with respect to $D'$). $g\in Aut_{(p, m)}$. $\tilde{ }$ means the colorings are related. The Figure shows that, under a type $II$ Reidemeister move, related colorings are taken to related colorings.}\label{fig:r2}
\end{figure}
\end{proof}

\begin{theorem}\label{thm:numberequivclasses}
Assume that a non-split link $K$ $($possibly a knot$)$ is such that there is an integer $m>1$ such that $\Delta^0_K (m) =p$, for some odd prime $p$. Then any diagram of $K$ has only one equivalence class of $(p, m)$-colorings.
\end{theorem}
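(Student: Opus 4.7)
The plan is to show that any diagram $D$ of $K$ carries exactly $p(p-1)$ non-trivial $(p,m)$-colorings and that every $\mathbf{Aff}(p)$-orbit on this set has size $p(p-1)$; a single orbit will then exhaust them all.

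First I would compute the dimension of the space of $(p,m)$-colorings of $D$ as a $\mathbf{Z}_p$-vector space. Let $M$ be the $n\times n$ coloring matrix of $D$; its determinant vanishes over $\mathbf{Z}$, and by the discussion of Subsection \ref{subsect:Alex-Fox-other-colorings} any first minor has determinant of absolute value $|\Delta_K^0(m)|=p$. The Smith normal form of $M$ therefore reads $\mathrm{diag}(d_1,d_2,\ldots,d_{n-1},0)$ with $d_1\mid d_2\mid\cdots\mid d_{n-1}$ and $d_1 d_2\cdots d_{n-1}=p$; primality of $p$ forces $d_1=\cdots=d_{n-2}=1$ and $d_{n-1}=p$, so reducing modulo $p$ gives $\dim_{\mathbf{Z}_p}\ker M = 2$. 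Hence $D$ admits exactly $p^2$ $(p,m)$-colorings. The non-split hypothesis, together with $m\not\equiv 1\pmod{p}$ (built into the convention $1<m<p$ of Definition \ref{def:LAQ}), ensures the monochromatic subspace is $1$-dimensional: assigning distinct constants $a\neq b$ to different components would force $(m-1)(a-b)\equiv 0\pmod{p}$ at any inter-component crossing, contradicting $a\neq b$. So $D$ carries exactly $p^2-p=p(p-1)$ non-trivial $(p,m)$-colorings.

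Next I would use Theorem \ref{thm:auto} to analyze orbits. For any non-trivial coloring $\mathcal{C}$, the fixed-point equation $f_{\lambda,\mu}(\mathcal{C})=\mathcal{C}$ reads $(\lambda-1)\mathcal{C}(a)=-\mu$ for every arc $a$; evaluating at two arcs carrying distinct colors forces $\lambda=1$ and then $\mu=0$, so the stabilizer of $\mathcal{C}$ in $\mathbf{Aff}(p)$ is trivial and its orbit has size $|\mathbf{Aff}(p)|=p(p-1)$. Since this matches the total count of non-trivial colorings, there is a single $\mathbf{Aff}(p)$-orbit, as claimed.

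The step I expect to demand the most care is the Smith-normal-form argument: pinning down that the diagonal has exactly one non-unit entry, namely $p$, uses both the primality of $\Delta_K^0(m)=p$ and the non-split hypothesis to rule out extra kernel directions beyond the monochromatic one. Once that dimension count is secured, everything else reduces to a clean orbit-stabilizer calculation.
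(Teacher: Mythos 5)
Your proof is correct and follows essentially the same route as the paper's: count the non-trivial $(p,m)$-colorings as $p^2-p$ via the Smith normal form of the coloring matrix, show that $\mathbf{Aff}(p)$ acts freely on this set, and conclude from the orbit size that there is a single orbit. The one small discrepancy is that the first-minor determinant evaluated at $m$ is $\pm m^k p$ rather than $\pm p$ (the Alexander polynomial is only defined up to units $\pm T^n$), so the invariant factors need not be $1,\dots,1,p$; but since $\gcd(m,p)=1$ exactly one invariant factor is divisible by $p$, your count $\dim_{\mathbf{Z}_p}\ker M=2$ still stands, and you are in fact slightly more careful than the paper in justifying the $1$-dimensionality of the monochromatic subspace for non-split links and in computing the stabilizers.
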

\begin{proof}
Consider the knot or non-split link, $K$, whose reduced Alexander polynomial is denoted $\Delta^0_K$.
Assume further that there exists a positive integer $m$ such that $\Delta^0_K (m) = p$, for some odd prime, $p$.
This means there is a certain reduced coloring matrix (over the integers) whose determinant is $\pm  m^k \cdot p$. Then its Smith Normal Form has a number of $1$'s and exactly one $m^k\cdot p$ along the diagonal. This means that the number of $(p, m)$-colorings for $K$ is $p\cdot p$ where the other $p$ factor comes from the fact that the coloring matrix has zero determinant. There are then $p^2-p$ non-trivial $(p, m)$-colorings for $K$ for there is one trivial $(p, m)$-coloring per each of the $p$ colors available.

Now the automorphism group of $(p, m)$-colorings is the semi-direct product $\mathbf{Z}_p \rtimes \mathbf{Z}_p^{\ast}$, whose action upon the set of non-trivial $(p, m)$-colorings over any given diagram is free (if two group elements move a coloring the same way then they are the same group element). The proof follows. If there are two group elements $f$ and $g$ such that, for any non-trivial $(p, m)$-coloring, $\cal C$, $f({\cal C}) = g({\cal C})$, then there are colors $a$ and $b$ such that $f(a)=g(a), f(b)=g(b)$ and so for certain $\lambda, \lambda' \in \mathbf{Z}_p^\ast$ and $\mu, \mu'\in \mathbf{Z}$, we have $\lambda a + \mu = b, \quad \lambda' a + \mu' = b$ which yields $\lambda = \lambda', \mu = \mu'$, since $\mathbf{Z}_p$ is a field.

Thus the number of elements in any orbit of this action is the number of elements of the group which is $p^2-p$. But this is the number of non-trivial $(p, m)$-colorings. There is then only one orbit for prime p.

\end{proof}

\begin{cor}
In the conditions of Theorem \ref{thm:numberequivclasses}, if $D$ is a diagram of $K$ such that there exist one non-trivial $(p, m)$-coloring $D$ using the minimum number of distinct colors, then all other non-trivial colorings of this diagram use the minimum number of colors.
\end{cor}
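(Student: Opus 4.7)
The plan is to leverage Theorem \ref{thm:numberequivclasses} directly, together with the explicit description of the automorphism group furnished by Theorem \ref{thm:auto}. Theorem \ref{thm:numberequivclasses} asserts that, under the stated hypotheses, the set of non-trivial $(p,m)$-colorings of a fixed diagram $D$ of $K$ forms a single orbit under the natural action of $\mathbf{Aut(p, m)}$ (two colorings being equivalent when one is obtained from the other by postcomposition with an element of $\mathbf{Aut(p, m)}$). So to conclude that every non-trivial coloring of $D$ uses the minimum number of distinct colors, it is enough to verify that the cardinality of the palette of a coloring --- i.e., the number of distinct values actually appearing on the arcs --- is invariant under this action.

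The verification is immediate once one invokes Theorem \ref{thm:auto}. Every element of $\mathbf{Aut(p, m)}$ is an affine map $f_{\lambda,\mu}(x) = \lambda x + \mu$ with $\lambda \in \mathbf{Z}_p^\times$ and $\mu \in \mathbf{Z}_p$, and any such map is a bijection of $\mathbf{Z}_p$ onto itself. If $S({\cal C}) \subset \mathbf{Z}_p$ denotes the set of colors appearing on the arcs of $D$ under the coloring ${\cal C}$, then the set of colors appearing in $f_{\lambda,\mu}({\cal C})$ is precisely $f_{\lambda,\mu}(S({\cal C}))$, whose cardinality equals $|S({\cal C})|$ because $f_{\lambda,\mu}$ is injective.

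Combining these two observations, suppose ${\cal C}_0$ is a non-trivial $(p, m)$-coloring of $D$ using $mincol_{p, m}(K)$ distinct colors. By Theorem \ref{thm:numberequivclasses}, every other non-trivial $(p, m)$-coloring ${\cal C}$ of $D$ has the form $f_{\lambda,\mu}({\cal C}_0)$ for suitable $\lambda, \mu$, and by the previous paragraph it uses exactly $|S({\cal C}_0)| = mincol_{p, m}(K)$ distinct colors as well. I do not anticipate any substantive obstacle: the corollary reduces to the remark that the single orbit produced by Theorem \ref{thm:numberequivclasses} is an orbit under bijections of $\mathbf{Z}_p$, so the size of the image set is automatically preserved. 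The only care needed is to stress that we are using the full group $\mathbf{Aut(p, m)}$ (not merely its subgroup of $\mathbf{Z}_p$-linear maps), but this is already how the orbit is set up in the proof of Theorem \ref{thm:numberequivclasses}.
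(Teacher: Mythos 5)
Your proposal is correct and follows essentially the same route as the paper: invoke the single-orbit statement of Theorem \ref{thm:numberequivclasses} and observe that the automorphisms acting are bijections of $\mathbf{Z}_p$, so the cardinality of the palette is preserved along the orbit. The paper's proof is just a more compressed version of the same argument (and notes, as you implicitly do, that this in fact shows \emph{all} non-trivial $(p,m)$-colorings of such a diagram use the same number of colors).
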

\begin{proof}
Since the group acts by automorphisms, which are bijections, any two colorings of the same orbit have the same number of distinct colors. Since there is only one orbit, the proof is complete. (In fact, we proved something stronger: for any diagram of such a $K$, any two non-trivial $(p, m)$-colorings on this diagram, use the same number of colors.)
\end{proof}

\section{Further obstructions to minimization.}\label{sect:obstructions}

In this Section, we prove there are sets of colors that can never constitute the set of colors of a non-trivial $(p, m)$-coloring. These results are important when trying to reduce the number of colors in  such a coloring: they indicate which colors must not be removed.

\begin{theorem}\label{thm:obstructions}
Let $K$ be a knot, $p$ an odd prime and $m$ an integer such that $1<m<p$, along with $$\Delta^0_{K} (m) = p .$$
 Then, $K$ admits non-trivial $(p, m)$-colorings.

 Let $S$ be a subset of representatives of  $\mathbf{Z}_p$, defined as follows:
\begin{enumerate}
\item If $1 < m < p/2$, then  $$S = \{ a\in \mathbf{Z}\, |\, 0\leq a < p/m \};$$
\item if $(p+1)/2 \leq m < p$, then $$S = \{ a\in \mathbf{Z}\, |\, 0\leq a < p/(p+1-m) \}.$$
\end{enumerate}
Then, a non-trivial $(p, m)$-coloring of $K$ cannot be obtained with colors from $S$ alone.
\end{theorem}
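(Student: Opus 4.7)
The strategy is to show that if a $(p,m)$-coloring of $K$ uses only colors from $S$, then each crossing equation holds over $\mathbb{Z}$ (not merely mod $p$), and then to invoke Corollary \ref{cor:integer-roots-Alexpoly}: since the hypothesis forces the relevant integer not to be a root of $\Delta^0_K$, the resulting integral coloring must be trivial, contradicting non-triviality. The whole proof thus reduces to an interval estimate showing that the integer $ma + (1-m)b - c$ is pinned strictly inside $(-p,p)$ and hence must vanish.

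For case 1, where $1 < m < p/2$, I would work with $m$ directly. Given $a,b,c \in S$, so $0 \le a,b,c < p/m$, I would rewrite the coloring condition as $ma - (m-1)b - c \equiv 0 \pmod p$ and estimate $0 \le ma < p$, $\;-(m-1)b \in \bigl(-p + p/m,\,0\bigr]$, and $-c \in \bigl(-p/m,\,0\bigr]$. Adding, the integer $ma - (m-1)b - c$ lies strictly in $(-p,\,p)$, and being divisible by $p$ it equals $0$. So the coloring is an integral $m$-coloring. Since $\Delta^0_K(m) = p \neq 0$, $m$ is not a root of $\Delta^0_K$, hence by Corollary \ref{cor:integer-roots-Alexpoly} every integral $m$-coloring is trivial, contradicting the assumption.

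For case 2, where $(p+1)/2 \le m < p$, the same bound fails for $m$ itself, so I would pass to the congruent negative representative $m' := m - p$ and set $\mu := 1 - m' = p + 1 - m \ge 2$. The set $S$ becomes $\{a \in \mathbb{Z} : 0 \le a < p/\mu\}$, and the analogous estimates $m'a = -(\mu-1)a \in (-p + p/\mu,\,0]$, $(1-m')b = \mu b \in [0,\,p)$, and $-c \in (-p/\mu,\,0]$ confine $m'a + (1-m')b - c$ strictly to $(-p,\,p)$; since the same quantity is $\equiv 0 \pmod p$, it must be $0$. Thus the coloring is an integral $m'$-coloring; because $m' < 0 \neq 2$, Corollary \ref{cor:integer-roots-Alexpoly} guarantees $\Delta^0_K(m') \neq 0$ and the integral coloring is therefore trivial, again a contradiction.

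The main obstacle is recognizing the correct integer representative in case 2: the asymmetric bound $p/(p+1-m)$ in the definition of $S$ is opaque until one sees that the over-$\mathbb{Z}$ argument wants $m$ replaced by $m-p$, after which the interval bookkeeping is parallel to case 1 and the proof closes uniformly via Corollary \ref{cor:integer-roots-Alexpoly}.
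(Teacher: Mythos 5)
Your proof is correct, and its core --- the interval estimates showing that $ma-(m-1)b-c$ (case 1) and $-(\mu-1)a+\mu b-c$ with $\mu=p+1-m$ (case 2) are integers pinned strictly inside $(-p,p)$ and hence vanish --- is exactly the computation in the paper, which phrases it as two nonnegative sides of the congruence each lying in $[0,p)$. Where you diverge is the endgame. The paper concludes that an integral coloring would force $\Delta^0_K(m)$ to be divisible by infinitely many primes, which is absurd since it equals $p$; you instead conclude that the coloring is an integral $t$-coloring (with $t=m$ or $t=m-p$), so $\Delta^0_K(t)=0$, and then rule this out via Corollary \ref{cor:integer-roots-Alexpoly}. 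Your route is actually the more careful one in case 2: the integral relation you obtain there is the coloring condition for the parameter $m'=m-p$, not for $m$, so the quantity that is forced to vanish is $\Delta^0_K(m-p)$ rather than $\Delta^0_K(m)=p$ --- a distinction the paper glosses over with ``the same conclusion follows'' --- and your observation that $m-p<0\neq 2$ is precisely what closes that gap. One small attribution issue: the step ``no integer root at $t$ implies every integral $t$-coloring is trivial'' is not literally the statement of Corollary \ref{cor:integer-roots-Alexpoly} (which only says the unique integer root is $2$); the implication you need --- a non-trivial integral solution forces the first minor determinant to vanish --- is the adjugate-matrix Proposition of Subsection \ref{subsect:Alex-Fox-other-colorings}, and you should cite that instead. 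With that reference corrected, the argument is complete.
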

\begin{proof}
We keep the notation and terminology from the statement.
\begin{enumerate}
\item If $1 < m < p/2$, then a $(p, m)$-coloring condition is of the form:
\[
c = ma+(1-m)b \Longleftrightarrow ma = (m-1)b + c
\]
Assume now $a, b, c \in S$. Then $$0 \leq ma < m \cdot p/m = p \quad \text{ and } \quad 0\leq (m-1)b + c < (m-1) \cdot p/m + p/m = p, $$ which means the expression $c = ma+(1-m)b$ holds over $\mathbf{Z}$ which further implies that $\Delta^0_{K} (m)(=p)$ has to be divisible by infinitely many primes. This is absurd. The proof is complete for the case $1 < m \leq p/2$.
\item If $(p+1)/2 \leq m < p$, then an $(p, m)$-coloring condition is of the form:
\[
c = ma+(1-m)b \Longleftrightarrow [p - (m-1)]b = [p - m]a + c
\]
An argument analogous to the one in the first item now follows. If $a, b, c\in S$ then, $$0 \leq [p - (m-1)]b <   [p - (m-1)]\cdot p/[p + 1  - m] = p, $$ and $$ 0\leq [p - m]a + c <   [p - m]\cdot p/[p - (m-1)] + p/[p - (m-1)] = p, $$ and the same conclusion as in the first item follows. The proof is complete.
\end{enumerate}
\end{proof}

\begin{cor}
We keep the notation and terminology from Theorem \ref{thm:obstructions}. Assume further $f\in Aut (p, m)$. Then, the colors from the set $f(S) = \{ f(a) \, |\, a\in S  \}$ cannot give rise to a non-trivial $(p, m)$-coloring of $K$.
\end{cor}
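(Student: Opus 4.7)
The plan is to reduce the corollary to Theorem \ref{thm:obstructions} by pulling back the hypothetical coloring through $f^{-1}$. Specifically, I would argue by contradiction: suppose there exists a non-trivial $(p,m)$-coloring $\mathcal{C}$ of some diagram of $K$ whose palette is contained in $f(S)$. Since $\mathbf{Aut}(p,m)$ is a group (Theorem \ref{thm:auto}), the inverse $f^{-1}$ is again a quandle automorphism of $\mathbf{LAQ}(p,m)$.

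Next, I would define $\mathcal{C}' := f^{-1}\circ \mathcal{C}$, viewed as an assignment of elements of $\mathbf{Z}_p$ to the arcs of the diagram. Because $f^{-1}$ preserves the operation $a\ast b = ma+(1-m)b$, the map $\mathcal{C}'$ satisfies the $(p,m)$-coloring condition at every crossing, so it is a bona fide $(p,m)$-coloring. Moreover, the palette of $\mathcal{C}'$ is contained in $f^{-1}(f(S)) = S$, since $f$ is a bijection.

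The final step is to observe that $\mathcal{C}'$ is non-trivial. Indeed, since $f^{-1}$ is a bijection of $\mathbf{Z}_p$, two arcs receive distinct colors under $\mathcal{C}$ if and only if they receive distinct colors under $\mathcal{C}'$; hence the non-triviality of $\mathcal{C}$ forces that of $\mathcal{C}'$. But then $K$ admits a non-trivial $(p,m)$-coloring using colors only from $S$, contradicting Theorem \ref{thm:obstructions}. This contradiction completes the proof.

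There is no genuine obstacle here — the only point worth checking with care is that composing a coloring with a quandle automorphism yields a coloring (which is immediate from the definition of quandle homomorphism, since a coloring is a homomorphism from the knot quandle to $\mathbf{LAQ}(p,m)$, and post-composition with an automorphism of the target preserves this property). The argument is essentially a transport-of-structure observation, and the proof can be stated in a few lines.
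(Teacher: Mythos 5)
Your argument is correct and is essentially the paper's own proof: the authors also pull the hypothetical coloring back through the inverse automorphism $g=f^{-1}$ to obtain a non-trivial $(p,m)$-coloring with palette in $g(f(S))=S$, contradicting Theorem \ref{thm:obstructions}. You simply spell out the verification (that post-composing with an automorphism preserves the coloring condition and non-triviality) which the paper leaves implicit.
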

\begin{proof}
Let $g$ be the inverse of $f$. If $f(S)$ could give rise to a non-trivial $(p, m)$-coloring of $K$ then, $g(f(S)) = S$ could also give rise to a non-trivial $(p, m)$-coloring of $K$. But this contradicts Theorem \ref{thm:obstructions}. This completes the proof.
\end{proof}

\begin{definition}\label{def:suffset}
Let $K$ be a knot admitting non-trivial $(p, m)$-colorings where $p$ is prime and $m$ is an integer such that $1<m<p$ and $\Delta^0_K(m)=p$. A subset $S$ of $\mathbf{Z}_p$ is said to be a {\bf{$\mathbf{(p, m)}$-Sufficient Set of Colors for $\mathbf{\text{K}}$}} if there is a diagram of $K$ which supports a non-trivial $(p, m)$-coloring, using colors from $S$, alone.
\end{definition}

\begin{cor}\label{cor:kauffmansaito}
We keep the notation and terminology from Theorem \ref{thm:obstructions}.

Suppose $S= \{   c_1, \dots , c_n   \}$ is a $(p, m)$-Sufficient Set of Colors for $K$. $($Without loss of generality we take the $c_i$'s from $\{ 0, 1, 2, \dots \, , p-1 \}$.$)$ For each $c_i$ $(i=1, 2, \dots , n)$, let $S_i$ be the set of ordered pairs $( c_i^1, c_i^2 )$ $($with $c_i^1\neq c_i^2 \text{ from } S$$)$ such that $mc_i^1 + (1-m)c_i = c_i^2$, mod $p$.

Then,\begin{enumerate}
\item There is at least one $i$ such that the expression $mc_i^1 + (1-m)c_i = c_i^2$ does not make sense over the integers, it only makes sense modulo $p$.

\item If there is $i\in \{ 1, 2, \dots , n \}$ such that, for all $j\in \{ 1, 2, \dots , n  \}$, $c_i$ does not belong to any of the ordered pairs in $S_j$, then $n> mincol_p\, L$ and  $S\setminus \{ c_i \}$ is also a $(p, m)$-Sufficient Set of Colors.
\end{enumerate}
\end{cor}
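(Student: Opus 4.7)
My plan is to treat the two parts separately, each by contradiction, and to draw on the Alexander-polynomial/minor-determinant correspondence for Part 1 and on connectivity of the one-component diagram for Part 2.

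For Part 1 I proceed by contradiction. Suppose every pair $(c_i^1, c_i^2)$ in every $S_i$ satisfies $mc_i^1 + (1-m)c_i = c_i^2$ as an equality of integers, not merely modulo $p$. Fix a diagram $D$ of $K$ equipped with a non-trivial $(p, m)$-coloring using colors from $S \subset \{0, 1, \ldots, p-1\}$. At each non-monochromatic crossing the under-arc colors $(a, c)$ with $a \neq c$ constitute a pair in $S_b$, where $b$ is the over-arc color, and by assumption this relation now holds over $\mathbf{Z}$; monochromatic crossings satisfy the relation trivially over $\mathbf{Z}$. The coloring thus furnishes a non-trivial integer solution of the coloring system, so the coloring matrix has $\mathbf{Z}$-kernel of rank at least $2$ (containing both the all-ones vector and this polychromatic solution). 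Its first minor determinant, which up to units is $\Delta^0_K(m)$ (Subsection~\ref{subsect:Alex-Fox-other-colorings}), must therefore vanish, contradicting the standing hypothesis $\Delta^0_K(m) = p \neq 0$ inherited from Definition~\ref{def:suffset}.

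For Part 2 suppose $c_i$ belongs to no pair in any $S_j$, $j = 1, \ldots, n$, and fix a diagram $D$ of $K$ with a non-trivial $(p,m)$-coloring using colors from $S$. I claim no arc of $D$ is colored $c_i$. Indeed, were an arc $\alpha$ colored $c_i$, it would be an under-arc at each of its two endpoints; if one such crossing were non-monochromatic, its under-arc colors $(a, c)$ with $a \neq c$ (both in $S$) would comprise a pair in $S_b$ for the over-arc color $b \in S$, placing $c_i \in \{a, c\}$ in some $S_j$, against hypothesis. Hence both endpoint crossings are monochromatic, forcing all three arcs meeting at each of them to be colored $c_i$. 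Walking along the diagram from $\alpha$, every under-crossing traversal preserves color $c_i$ on the next arc and shows the over-arc is also $c_i$; since $K$ has one component, this walk visits every arc of $D$, and so the coloring is constant, contradicting non-triviality.

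Therefore no arc of $D$ bears color $c_i$; the same non-trivial coloring uses only colors from $S \setminus \{c_i\}$, so $S \setminus \{c_i\}$ is a $(p, m)$-Sufficient Set of Colors for $K$. The coloring on $D$ now exhibits at most $n - 1$ distinct colors, so $mincol_{p, m}(K) \leq n - 1 < n$. The main obstacle I anticipate is rigorously establishing the walking argument in Part 2: one must verify that traversing a one-component knot diagram visits every arc (a connectivity observation essentially extracted from the proof of Lemma~\ref{lem:palettegraphknotsconnected}) and that the propagation ``all arcs meeting a monochromatic crossing share the same color'' carries through consistently to every over-arc encountered along the way, so that the entire diagram is forced to a single color.
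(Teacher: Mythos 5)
Your proof is correct, and its overall architecture matches the paper's: both parts go by contradiction, Part 1 by showing that an all-integral set of coloring conditions forces the first minor determinant (i.e.\ $\Delta^0_K(m)$, up to units) to vanish, and Part 2 by showing that a color meeting no ordered pair can only sit on a monochromatic component. For Part 1 the paper phrases the contradiction as ``the coloring conditions would hold modulo any prime,'' deferring to the absurdity already derived in the proof of Theorem \ref{thm:obstructions} (divisibility of $p$ by infinitely many primes); your rank-of-the-kernel formulation is the same fact dressed in linear algebra, and is if anything cleaner. The real divergence is in how Part 2's second alternative is killed: the paper says the offending components would make $K$ a split link and invokes Proposition \ref{prop:Alexanderlink} ($\Delta^0_K\equiv 0$), which is the version that nominally covers multi-component links; you instead run an explicit walk along the single component (in the spirit of Lemma \ref{lem:palettegraphknotsconnected}) to conclude the entire coloring is trivial, which is more elementary and more carefully justified, but is genuinely knot-specific. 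Two small points you should make explicit: (i) your step ``a non-monochromatic crossing has distinct under-arc colors'' is exactly the content of the proof of Proposition \ref{prop:min3} and deserves a citation, since without it the pair $(a,c)$ need not a priori lie in any $S_b$; and (ii) the walk argument needs the diagram to have at least one undercrossing on the component, which is guaranteed here because a crossingless (one-arc) diagram admits no non-trivial coloring.
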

\begin{proof}
\begin{enumerate}
\item If for all $i$'s each of the  expressions $mc_i^1 + (1-m)c_i = c_i^2$ makes sense without considering it mod $p$, then we have colorings whose coloring conditions hold modulo any prime. But this conflicts with Theorem \ref{thm:obstructions}.
\item Since $c_i$ cannot be the color of an under-arc at a polychromatic crossing, then this implies one of two possibilities. Either $c_i$ is not being used at all, and the proof is complete. Or $K$ is a split link (see Definition \ref{def:splitlink} and Proposition \ref{prop:Alexanderlink}, right below) such that one or more link components are colored with $c_i$
alone. But in this case, $\Delta^0_K(m)=0$ over the integers, which conflicts with the hypothesis.
\end{enumerate}
This completes the proof.
\end{proof}

The following Proposition is a well-known fact proved by Alexander \cite{Alexander}. We state and prove it here for the sake of completeness since we use it in the proof of Corollary \ref{cor:kauffmansaito}.

\begin{prop}\label{prop:Alexanderlink}
The Alexander polynomial of a split link, $K$, is identically $0$.
\end{prop}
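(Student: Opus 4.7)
My plan is to exploit the block structure of the Alexander matrix of a split diagram. Since $K$ is split, pick a diagram $D$ of $K$ that realizes the splitting geometrically: $D = D_1 \sqcup D_2$, where $D_1, D_2$ are the sub-diagrams in the two disjoint balls, so no crossing of $D$ has over- and under-arcs from different sub-diagrams. Label the arcs so that the variables of $D_1$ come first and those of $D_2$ come second. Since every coloring equation read off a crossing of $D_i$ involves only variables from $D_i$, the Alexander matrix of $D$ takes the block-diagonal form
\[
M \;=\; \begin{pmatrix} M_1 & 0 \\ 0 & M_2 \end{pmatrix},
\]
where $M_i$ is the Alexander matrix of $D_i$.

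Next I would recall why each $M_i$ is singular as a matrix over $\Lambda = \mathbf{Z}[T^{\pm1}]$: every row of an Alexander matrix contains exactly one $T$, one $1-T$, and one $-1$, with the other entries $0$, and $T + (1-T) + (-1) = 0$, so the all-ones vector is in the kernel (this is the trivial/monochromatic coloring, as already observed in the excerpt). Hence $\det M_i = 0$ in $\Lambda$ for $i = 1, 2$.

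The core step is to show that \emph{every} first minor of $M$ vanishes. Write $n_i$ for the size of $M_i$ and fix a row $r$ and a column $c$ to delete from $M$, leaving a square matrix of size $n_1+n_2-1$. Four cases arise:
\begin{itemize}
\item If $r$ and $c$ both lie in the $M_1$-block, the resulting matrix is block-diagonal with blocks $M_1'$ (size $n_1-1$) and $M_2$ (size $n_2$), so its determinant equals $(\det M_1')(\det M_2) = 0$.
\item The symmetric case with $r, c$ both in the $M_2$-block is handled the same way.
\item If $r$ lies in the $M_1$-block but $c$ lies in the $M_2$-block, then in the resulting matrix the last $n_2$ rows (coming from $M_2$) have zeros in the first $n_1$ columns (coming from $M_1$); they can contribute non-zero entries only in the remaining $n_2 - 1$ columns. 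By pigeonhole, in every term of the Leibniz expansion two of these $n_2$ rows must share a column, so every term vanishes and the determinant is $0$.
\item The remaining case is symmetric.
\end{itemize}
Thus some (indeed every) first minor of $M$ has determinant $0$ in $\Lambda$. Since the Alexander polynomial is defined as such a first minor determinant (independent of the choice up to units of $\Lambda$, as recalled in the excerpt), we obtain $\Delta_K(T) \equiv 0$, completing the proof.

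The only real obstacle is the mixed case where the deleted row and column come from different blocks; once the pigeonhole/zero-column-supply observation above is in hand, the rest is bookkeeping on the block decomposition.
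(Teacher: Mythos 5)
Your proof is correct and takes essentially the same route as the paper's: realize the splitting by a diagram whose Alexander matrix is block-diagonal with each block singular (the all-ones vector is in each kernel), and conclude that every first minor vanishes; the paper states this tersely and leaves the row/column-deletion case analysis implicit, which you spell out. One small wording slip: in the mixed case no two rows of a Leibniz term ``share a column''---rather, the pigeonhole argument shows some row must be assigned a column outside its support, so the term still vanishes and your conclusion stands.
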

\begin{proof}
Consider $K$ after the deformation which places some of its components in one ball and the others in the other ball, the two balls being disjoint, call them $B_1$ and $B_2$. Then we can organize the Alexander coloring conditions and corresponding variables for each of the balls independently. Then the (Alexander) coloring matrix is a block-diagonal matrix, one block per ball. But each of these block diagonal matrices is such that upon addition of all its columns we obtain a column of $0$'s. Then the first minor determinant (i.e., the Alexander polynomial of $K$) of the over-all matrix has to be $0$. This completes the proof.
\end{proof}

\section{Minimizing by direct calculation.}\label{sect:minimizing}

We now state and prove a result which will give a sharper estimate for the lower bounds for the minimum number of colors for non-trivial $(p, m)$-colorings, in some particular cases.

\begin{prop}\label{prop:conditionformin4}
Suppose $K$ is a knot whose Alexander polynomial is not identically $0$, and that $p$ is an odd prime and $m$ an integer such that $1<m<p$. Assume further that $K$ admits non-trivial $(p, m)$-colorings.

Such a non-trivial coloring over a diagram of $K$ needs at least $4$ different colors, if none of the conditions in $(\ast)$ below holds.$$  (\ast)  \qquad  \qquad  m=_p 2  \qquad \qquad m^2 - m +1 =_p 0  \qquad \qquad  2m-1=_p 0 \qquad  .$$
\end{prop}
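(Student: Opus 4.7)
The plan is to prove the contrapositive: if some non-trivial $(p,m)$-coloring of a diagram of $K$ uses at most three colors, then one of the conditions in $(\ast)$ must hold. By Proposition \ref{prop:min3} the number of colors is at least three, so one may assume that exactly three distinct colors $a,b,c \in \mathbf{Z}_p$ are used and that, at each polychromatic crossing, all three colors appear (as under-in, over-arc, and under-out). Each such crossing then realizes one of the six ordered configurations $(s_1,s_2,s_3)$ from $\{a,b,c\}$, which corresponds to a directed edge $s_1 \to s_3$ with label $s_2$ in the palette graph of the coloring.

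I would organize these six edges into three ``groups'' according to the color on the over-arc: the $b$-labeled edges only connect $a$ and $c$, the $a$-labeled edges only connect $b$ and $c$, and the $c$-labeled edges only connect $a$ and $b$. Since $K$ is a knot, Lemma \ref{lem:palettegraphknotsconnected} forces the palette graph on these three vertices to be connected; hence edges from a single group alone (which would leave the third color isolated) are not sufficient, and at least one edge from each of two different groups must be present. The key step is then to enumerate the four possible pairings of edges drawn from two different groups and, in each case, combine the two coloring relations to eliminate the third variable. By the $S_3$-symmetry permuting $\{a,b,c\}$, it is enough to work out these four pairings for one representative choice of two groups, say the $b$-group and the $c$-group.

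A direct computation then shows that the four pairings split into exactly three types, each of which yields one of the conditions in $(\ast)$ (the only alternative being the forbidden equality of two of the colors):
\begin{itemize}
\item pairings with a common under-in, such as $c = ma + (1-m)b$ and $b = ma + (1-m)c$, give $m(2-m)(b-a) \equiv 0 \pmod p$, hence $m \equiv 2 \pmod p$;
\item pairings with a common under-out, such as $c = ma + (1-m)b$ and $c = mb + (1-m)a$, give $(2m-1)(a-b) \equiv 0 \pmod p$, hence $2m \equiv 1 \pmod p$;
\item the remaining ``crossed'' pairings, such as $c = ma + (1-m)b$ together with $b = mc + (1-m)a$ or $a = mb + (1-m)c$, give $(m^2 - m + 1)(a-b) \equiv 0 \pmod p$, hence $m^2 - m + 1 \equiv 0 \pmod p$.
\end{itemize}
Thus, if all three conditions in $(\ast)$ fail, every admissible pair of edges from distinct groups forces two of the three colors to coincide, contradicting connectivity of the palette graph. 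The main obstacle is not conceptual but combinatorial: the exhaustive bookkeeping of the six edge types and the four pairings must be done carefully, and the color-permutation symmetry exploited cleanly, but the underlying algebra is the routine substitution of one linear relation into another.
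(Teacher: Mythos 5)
Your proof is correct and is essentially the paper's own argument: the paper likewise starts from one polychromatic crossing $c=ma+(1-m)b$, observes that the arc colored $b$ must go under at some polychromatic crossing (``otherwise the link would be split''), and works through the same four resulting two-equation systems to extract exactly the conditions $m=_p 2$, $m^2-m+1=_p 0$ and $2m-1=_p 0$. Your only packaging difference is to obtain the second equation via connectivity of the palette graph (Lemma \ref{lem:palettegraphknotsconnected}) and to enumerate pairs of edges with distinct labels rather than the four ways the over-arc color can reappear as an under-arc; this is a clean, equivalent bookkeeping, with the caveat that the connectivity lemma is only stated for one-component knots.
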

\begin{proof}
Any non-trivial $(p, m)$-coloring with $p$ and $m$ as in the statement, is such that three distinct colors, say $a, b, c$, have to meet at a crossing as depicted in the top left of Figure \ref{fig:m-xing2}, see statement and proof of Proposition \ref{prop:min3}.
\begin{figure}[!ht]
	\psfrag{a}{\huge$a$}
	\psfrag{b}{\huge$b$}
	\psfrag{c}{\huge$c$}
	\psfrag{c = ma+(1-m)b}{\huge$c$}
	\psfrag{(i)}{\huge$(i)$}
	\psfrag{(ii)}{\huge$(ii)$}
	\psfrag{(iii)}{\huge$(iii)$}
	\psfrag{(iv)}{\huge$(iv)$}
	\psfrag{a(c)}{\huge$a(c)$}
	\psfrag{c(a)}{\huge$c(a)$}
	\centerline{\scalebox{.35}{\includegraphics{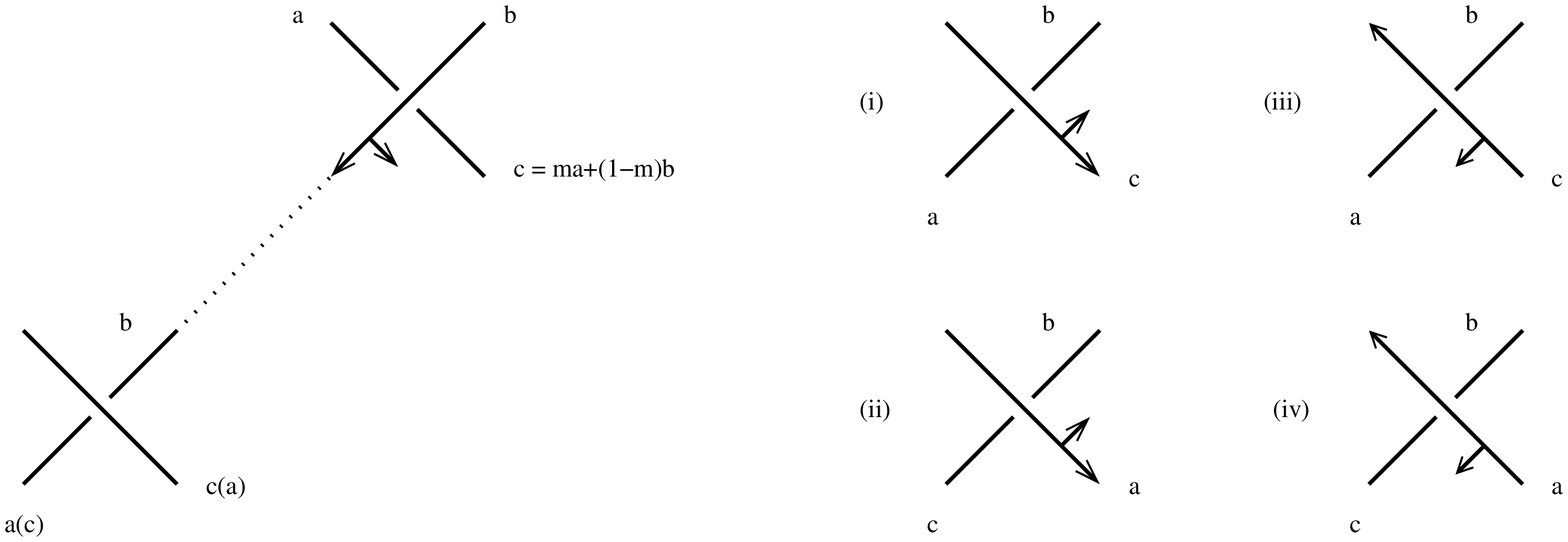}}}
	\caption{The setting with only three colors. In the lower left, the crossing where the color $b$ ends up with the possibilities illustrated for this crossing. These possibilities materialize into $(i), (ii), (iii), \text{ and }(iv)$ on the right hand-side.}\label{fig:m-xing2}
\end{figure}
Since the color $b$ stemming from the over-arc of the crossing in the top left of Figure \ref{fig:m-xing2} has to end up at a polychromatic crossing (otherwise the link would be split) as depicted in the bottom left of the same Figure, the condition $c=ma +(1-m)b$ has to comply with one of the coloring conditions associated with $(i), (ii), (iii), \text{ or }(iv)$. We  treat them now.
\begin{enumerate}
\item $(i)$:
\[
\begin{cases}
c=ma+(1-m)b\\
b=ma+(1-m)c
\end{cases}
\]Subtracting the bottom expression from the first and simplifying yields $(m-2)(b-c)=0$. Since $b$ and $c$ are distinct then $m=2$ mod $p$.

\item $(ii)$:
\[
\begin{cases}
c=ma+(1-m)b\\
b=mc+(1-m)a
\end{cases}
\]Substituting $b$ in the top equation for its expression in the bottom equation yields $(m^2-m+1)(a-c)=0$. Since $a$ and $c$ are distinct then $m^2-m+1=0$ mod $p$.

\item $(iii)$:
\[
\begin{cases}
c=ma+(1-m)b\\
a=mb+(1-m)c
\end{cases}
\]Substituting $a$ in the top equation for its expression in the bottom equation yields $(m^2-m+1)(b-c)=0$. Since $b$ and $c$ are distinct then $m^2-m+1=0$ mod $p$.

\item $(iv)$:
\[
\begin{cases}
c=ma+(1-m)b\\
c=mb+(1-m)a
\end{cases}
\]Simplifying yields $(2m-1)(b-a)=0$. Since $b$ and $a$ are distinct then $2m-1=0$ mod $p$.
\end{enumerate}
Therefore, if $a, b, c$ are distinct, mod $p$, and one of the ($\ast$) conditions is satisfied, then a fourth color is needed to assemble such a non-trivial coloring. the proof is concluded.
 \end{proof}

We remark that $m=-1$ mod $3$ for $p=3$ complies with each expression in $(\ast)$.

Note that the non-trivial Fox $3$-coloring of the trefoil (left-hand side of Figure \ref{fig:tri-coloring-trefoil-unknot}) is a non-trivial $(3, 2)$-coloring. Here $mincol_{3, 2}(Trefoil) = 3$. This does not conflict with Proposition \ref{prop:conditionformin4} above since the reduced Alexander polynomial for the trefoil is $\Delta^0_{Trefoil} (m) = m^2-m+1$, so that $\Delta^0_{Trefoil} (2) = 3 =_{3} 0$ (also for $(7, 3)$, $mincol_{7, 3}(Trefoil) = 3$).

\begin{cor}\label{cor:minbyhand}
Let $m$ and $p$ be as in Proposition \ref{prop:conditionformin4}. Furthermore, if $m>2$ and $p>m^2-m+1\,   ($over $\mathbf{Z})$, then none of the conditions in $(\ast)$ is satisfied. Thus any  non-split link admitting a non-trivial $(p, m)$-coloring for such $p, m$ needs at least $4$ distinct colors.
\end{cor}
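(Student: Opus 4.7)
The plan is to verify that under the hypotheses $m>2$ and $p>m^2-m+1$ (over $\mathbf{Z}$), each of the three congruences listed in $(\ast)$ in Proposition \ref{prop:conditionformin4} is individually impossible, and then to quote that proposition. Since $m$ is chosen as a representative with $1<m<p$, every ``$=_p$'' in $(\ast)$ can be converted into an integer comparison, and the problem reduces to three elementary arithmetic checks.

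First I would dispose of $m=_p 2$. The bounds $1<m<p$ force the representative of any residue class to be unique in $\{2,\dots,p-1\}$, so $m=_p 2$ is equivalent to $m=2$, contradicting the assumption $m>2$. Next, for $m^2-m+1=_p 0$, observe that the quadratic $x^2-x+1$ has negative discriminant, hence $m^2-m+1>0$ for every integer $m$; combining this with the hypothesis $p>m^2-m+1$ gives $0<m^2-m+1<p$, so $p$ cannot divide $m^2-m+1$.

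The only slightly less automatic case is $2m-1=_p 0$. From $1<m<p$ one gets $1<2m-1<2p-1$, so the only multiple of $p$ available to $2m-1$ is $p$ itself; divisibility therefore forces $p=2m-1$. I would then show this is incompatible with $p>m^2-m+1$: the inequality $m^2-m+1>2m-1$ is equivalent to $(m-1)(m-2)>0$, which holds for every $m>2$. Thus $p=2m-1\le m^2-m+1$, directly contradicting the hypothesis.

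Having ruled out all three alternatives in $(\ast)$, Proposition \ref{prop:conditionformin4} applies and yields that any non-trivial $(p,m)$-coloring of a non-split link must use at least $4$ distinct colors. The ``main obstacle'' is really only the bookkeeping in case three, where one must notice that the arithmetic comparison $2m-1$ versus $m^2-m+1$ factors cleanly as $(m-1)(m-2)$; the other two cases are immediate from the range restrictions.
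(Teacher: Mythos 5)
Your proof is correct and follows essentially the same route as the paper: both arguments reduce to the chain $2<m<2m-1<m^2-m+1<p$, with the key step being the factorization $m^2-m+1-(2m-1)=(m-1)(m-2)>0$ for $m>2$. Your case-by-case write-up (in particular deducing $p=2m-1$ before reaching the contradiction) is just a slightly more verbose packaging of the same inequalities.
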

\begin{proof}
Noting that $$m^2-m+1 - (2m-1) = m^2-3m+2 = (m-2)(m-1),$$ it follows that $$2 < m < 2m-1 < m^2-m+1 < p, $$ which completes the proof.
\end{proof}
\begin{cor}\label{cor:minbyhand2}
Let $m$ and $p$ be as in Proposition \ref{prop:conditionformin4} and $m>2$ and $p>m^2-m+1\,   ($over $\mathbf{Z})$. Assume further that $m^2 >p$. Then  any  non-split link admitting a non-trivial $(p, m)$-coloring for such $p, m$ needs at least $4$ distinct colors which is a sharper estimate than the one given by Theorem \ref{thm:satohmin}.
\end{cor}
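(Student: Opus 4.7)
The plan is to observe that this corollary has two parts: a lower bound claim and a sharpness claim. The lower bound claim is almost immediate from Corollary \ref{cor:minbyhand}, and the sharpness claim is a straightforward comparison of two explicit bounds. The hypothesis $m^2 > p$ enters only for the sharpness comparison.

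First, I would note that the hypotheses $m > 2$ and $p > m^2 - m + 1$ (over $\mathbf{Z}$) are precisely those of Corollary \ref{cor:minbyhand}. Hence that corollary applies directly: none of the conditions in $(\ast)$ of Proposition \ref{prop:conditionformin4} can hold, and consequently any non-split link $L$ admitting a non-trivial $(p,m)$-coloring must use at least $4$ distinct colors. This establishes $mincol_{p,m}(L) \geq 4$.

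Next, to verify that this is strictly sharper than the bound of Theorem \ref{thm:satohmin}, I would compute what that theorem yields under the present hypotheses. Since $m > 2 > 0$, we have $M = \max\{|m|,|m-1|\} = m$. The additional assumption $m^2 > p$ translates into $\log_m p < 2$, so $\lfloor \log_m p \rfloor \leq 1$. Therefore Theorem \ref{thm:satohmin} delivers only
\[
mincol_{p,m}(L) \;\geq\; 2 + \lfloor \log_m p \rfloor \;\leq\; 3.
\]
Comparing with the bound $mincol_{p,m}(L)\geq 4$ coming from Corollary \ref{cor:minbyhand}, the latter is strictly stronger, which is precisely the sharpness assertion.

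The only mild subtlety — and what I would treat as the main point to check — is that the lower bound $\lfloor \log_m p \rfloor \geq 1$ actually holds (so that Theorem \ref{thm:satohmin} does give a non-vacuous bound of exactly $3$ rather than $2$). This follows since $p > m^2 - m + 1 > m$ for $m > 2$, so $\log_m p > 1$. Thus Theorem \ref{thm:satohmin} gives exactly $3$, whereas Corollary \ref{cor:minbyhand} gives $4$, completing the proof. No genuine obstacle arises; the corollary is essentially a remark combining Corollary \ref{cor:minbyhand} with an elementary logarithmic estimate.
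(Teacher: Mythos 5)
Your proposal is correct and follows essentially the same route as the paper, whose proof is the one-line observation that $m^2>p$ forces $2+\lfloor \ln_m p\rfloor\leq 3<4$, with the ``at least $4$ colors'' part inherited from Corollary \ref{cor:minbyhand} exactly as you state. Your additional check that $p>m$ (so the theorem's bound is exactly $3$ rather than vacuous) is a harmless elaboration not present in, and not needed for, the paper's argument.
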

\begin{proof}
If $m^2>p$ then $2 + \lfloor \ln_m p \rfloor \leq 3 < 4$, which completes the proof.
\end{proof}
In Figure \ref{fig:6-2-LAQ} we show an example illustrating Corollary \ref{cor:minbyhand2}.
\begin{figure}[!ht]
	\psfrag{0}{\huge$0$}
	\psfrag{1}{\huge$1$}
	\psfrag{6-2}{\huge$\mathbf{6_2}$}
	\psfrag{19, 3}{\huge$\mathbf{19, 3}$}
	\psfrag{2}{\huge$2$}
	\psfrag{3}{\huge$3$}
	\psfrag{7}{\huge$7$}
	\psfrag{17}{\huge$17$}
	\psfrag{5}{\huge$5$}
	\psfrag{a}{\huge$a$}
	\psfrag{b}{\huge$b$}
	\psfrag{c=3a-2b}{\huge$c=3a-2b$}
	\psfrag{p=19}{\huge$\quad\mathbf{p=19}$}
	\centerline{\scalebox{.35}{\includegraphics{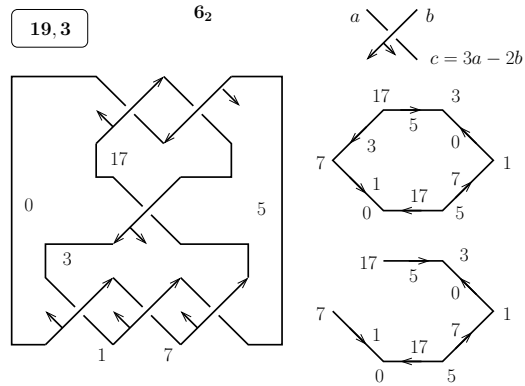}}}
	\caption{Non-trivial colorings for $6_2$; $m=3$ and $p=19$.}\label{fig:6-2-LAQ}
\end{figure}

We will look into some the cases not covered by Corollary \ref{cor:minbyhand}: $m=2$ or $2m-1=0$ or $p=m^2-m+1$. As already mentioned, one such particular case is $p=3$
and $m=-1$, and the trefoil admits such non-trivial colorings which involve exactly three colors - over any diagram of the trefoil.

Table \ref{Ta:Behavior2m-1-m2-m+1} shows the behavior of quantities $2m-1$ and $m^2-m+1$ as a function of $m$ for the first few positive integers, for the reader's convenience.

\begin{table}[h!]
\begin{center}
\begin{tabular}{| c ||  c |   c  |     c  |   c  |   c  |   c  |   c  |   c  |   c  |   c  |   c  |   c  |   }\hline
    $m$      &  $2$ &   $3$  & $4$ & $5$  & $6$ & $7$  & $8$ & $9$ & $10$ & $11$ & $12$ & $13$  \\ \hline \hline
$2m-1$ &  $3$        & $5$    &   $7$  &  $9$   &   $11$  &  $13$   &   $15$  &  $17$  &   $19$  & $21$  &   $23$  &  $25$  \\ \hline \hline
$m^2-m+1$   & $3$     & $7$   & $13$  & $3\cdot 7$  & $31$    & $43$ & $3\cdot 19$  & $73$ & $7\cdot 13$  & $3\cdot 37$  & $7\cdot 19$   & $157$    \\ \hline
\end{tabular}
\caption{Behavior of quantities $2m-1$ and $m^2-m+1$ as a function of $m$ for the first few positive integers.}
\label{Ta:Behavior2m-1-m2-m+1}
\end{center}
\end{table}

\section{Reducing the number of colors.}\label{sect:reducing}

In this Section we apply the results above to reduce as much as we are able, the number of colors in a few cases of non-trivial $(p, m)$-colorings. In Figure \ref{fig:6-1-m-3-reducing} we  obtain $mincol_{5, 3} (6_1) = 3$ noting Proposition \ref{prop:conditionformin4} cannot be applied here (since $2\cdot 3 - 1 = 5$) and that this situation is then governed by Proposition \ref{prop:min3}.

As for Figure \ref{fig:6-3-reducing}, we have a non-trivial $(7, 2)$-coloring of $6_3$ so Theorem \ref{thm:satohmin}  estimates a lower bound for $mincol_{7, 2}(6_3)$ of $2+\lfloor \ln_2 7\rfloor = 4$. Since we are only able to reduce the number of colors from $6$ (top diagram) to $5$ (bottom diagram), we can only state $mincol_{7, 2}(6_3)\leq 5$.

\begin{figure}[!ht]
	\psfrag{0}{\huge$0$}
	\psfrag{1}{\huge$1$}
	\psfrag{2}{\huge$2$}
	\psfrag{3}{\huge$3$}
	\psfrag{4}{\huge$4$}
	\psfrag{5}{\huge$5$}
	\psfrag{a}{\huge$a$}
	\psfrag{b}{\huge$b$}
	\psfrag{3a-2b}{\huge$c=3a-2b$}
	\psfrag{6-1}{\huge$\mathbf{6_1}$}
	\psfrag{5, 3}{\huge$\mathbf{5, 3}$}
	\centerline{\scalebox{.35}{\includegraphics{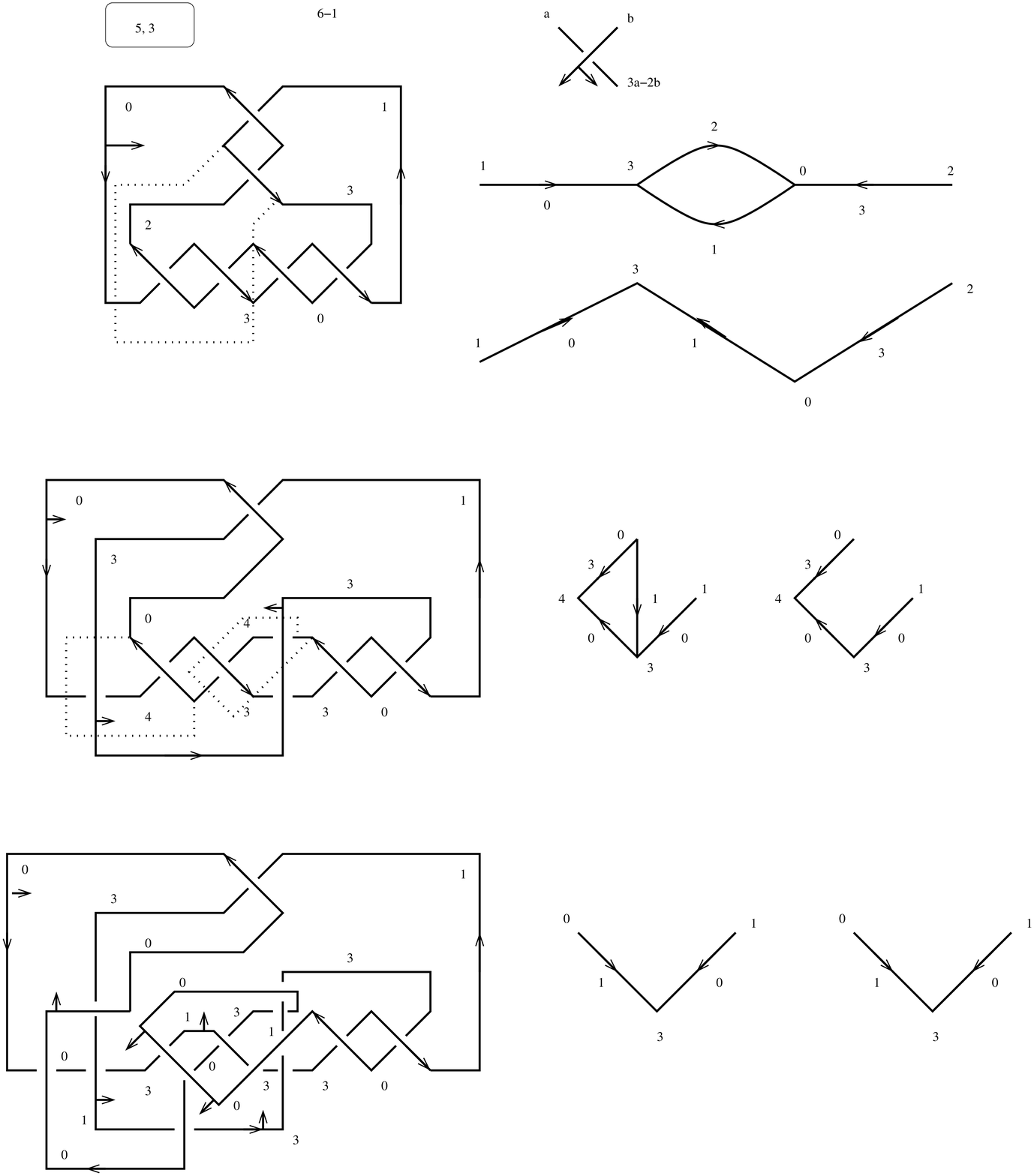}}}
	\caption{Revisiting. The knot $6_1$ whose $m$-determinant is $2m^2-5m+2$. For $m=3$ we obtain $3-\det = 5$. In the top row, a coloring with integers and the coloring condition. Below we perform colored Reidemeister moves (indicated by the broken lines) in order to remove the colors. In the middle row, we remove color $2$ but gain color $4$. In the bottom row we remove color $4$ obtaining the minimum of three colors - see Proposition \ref{prop:min3}.}\label{fig:6-1-m-3-reducing}
\end{figure}

\begin{figure}[!ht]
	\psfrag{0}{\huge$0$}
	\psfrag{1}{\huge$1$}
	\psfrag{2}{\huge$2$}
	\psfrag{3}{\huge$3$}
	\psfrag{4}{\huge$4$}
	\psfrag{5}{\huge$5$}
	\psfrag{6}{\huge$6$}
	\psfrag{a}{\huge$a$}
	\psfrag{b}{\huge$b$}
	\psfrag{2a-b}{\huge$c=2a-b$}
	\psfrag{6-3}{\huge$\mathbf{6_3}$}
	\psfrag{7, 2}{\huge$\mathbf{7, 2}$}
	\psfrag{m=2}{\huge$\mathbf{m=2}$}
	\centerline{\scalebox{.35}{\includegraphics{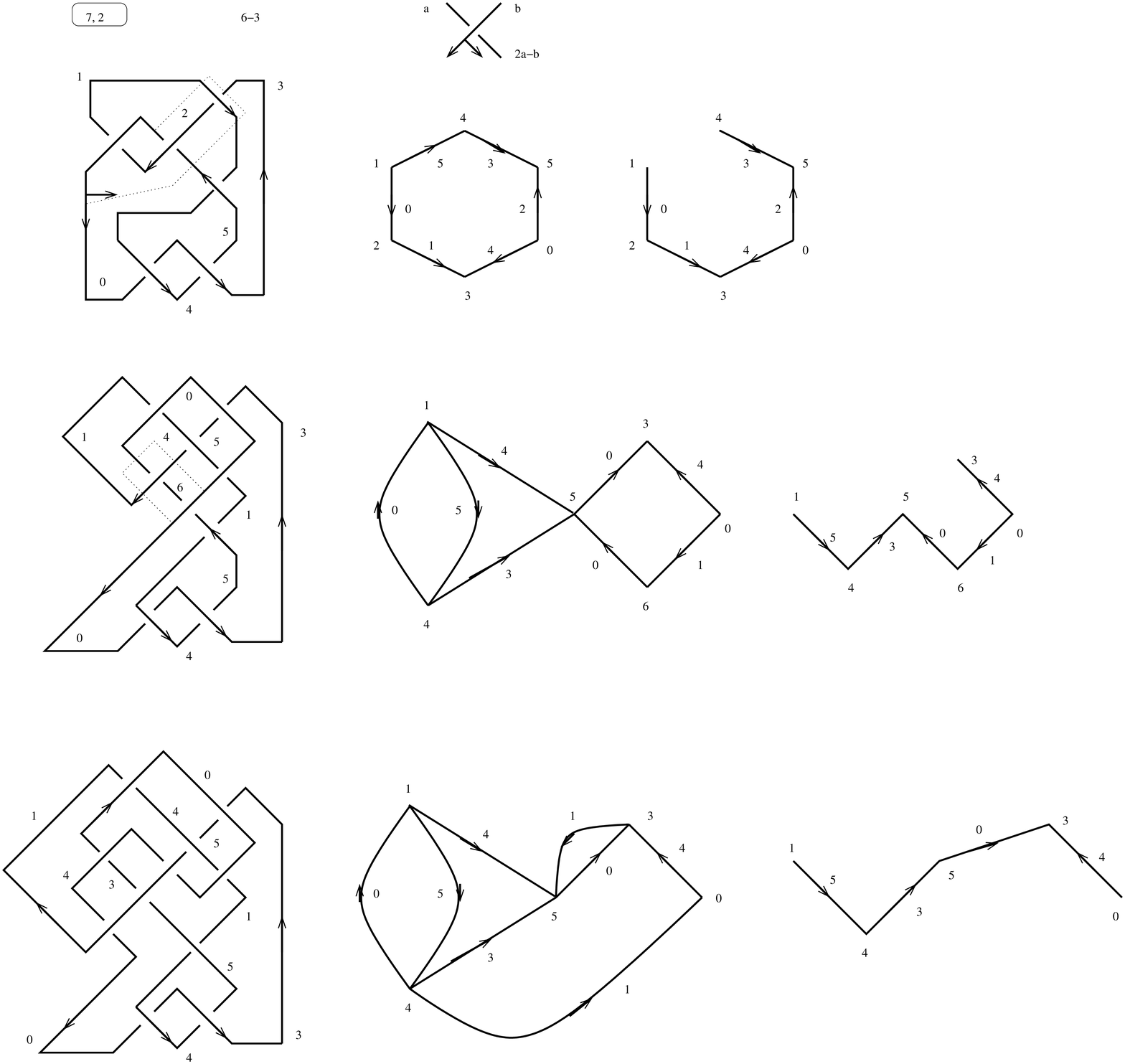}}}
	\caption{The knot $6_3$ whose $m$-determinant is $m^4-3m^3+5m^2-3m+1$. For $m=2$ we obtain $2-\det = 7$.   The initial diagram has 6 colors and the final diagram has 5 colors. Note the evolution of the corresponding palette graphs and spanning trees.}\label{fig:6-3-reducing}
\end{figure}

\section{Directions for future work.}\label{sect:futurework}

\begin{itemize}
\item Are palette graphs of non-trivial non-integral colorings connected? They are for knots thanks to Lemma \ref{lem:palettegraphknotsconnected}.

\item
Are there further conditions to be met so that the Kauffman-Harary result generalizes to the other linear Alexander quandles?

\item Is there a strong version of the KH behavior in the sense that if one reduced diagram of the alternating knot of prime $m$-determinant displays KH behavior then so do all other reduced alternating diagrams of the same knot?

\item Does $mincol_{p, m} (K)$ depend on $K$?

\item Is there a topological interpretation of these minima? Perhaps having to do with covering spaces of the knot?  Would such a topological interpretation give insight to the previous question?

\item What is more efficient at telling knots apart? The mere counting of colorings or the more complex state-sum invariant constructed using quandle cocycles?

\end{itemize}

\end{document}